\definecolor{citegreen}{rgb}{0,0.6,0}
\definecolor{refred}{rgb}{0.8,0,0}
\theoremstyle{plain}
\newtheorem{teo}{Theorem}[section]
\newtheorem{lemma}[teo]{Lemma}
\newtheorem{prop}[teo]{Proposition}
\newtheorem{ackn}{Acknowledgments\!}
\theoremstyle{definition}
\newtheorem{dfnz}[teo]{Definition}
\theoremstyle{remark}
\newtheorem{rem}[teo]{Remark}
\numberwithin{equation}{section}
\def\SS{{{\mathbb S}}}
\def\NN{{{\mathbb N}}}
\def\RR{{\mathbb R}}
\def\R{{\mathbb R}}
\def\BBB{{\mathrm B}}
\def\Ric{{\mathrm{Ric}}}
\def\CCC{{\mathrm C}}
\def\Rm{{\mathrm{Riem}}}
\def\Pic{{\mathrm{Pic}}}
\def\PPP{{\mathrm{P}}}
\def\Pm{{\mathrm{Piem}}}
\def\WWW{{\mathrm W}}
\def\Sec{\mathrm {Sec}}
\def\dt{\frac{\partial\,}{\partial t}}
\def\tr{\operatornamewithlimits{tr}\nolimits}
\def\RRR{\mathrm{R}}
\def\Id{\mathrm {Id}}
\newcommand{\Derpar} [1] { \frac{\partial~ } {\partial {#1} }}
\newcommand{\derpar}[2] { \frac{\partial{#1}}{\partial {#2} }} 
\newcommand{\M}[1]{\mathcal{#1}}
\title[The Ricci--Bourguignon Flow]{The Ricci--Bourguignon Flow}
\author[G. Catino]{Giovanni Catino}
\address[Giovanni Catino]{Dipartimento di Matematica, Politecnico di Milano, Piazza Leonardo da Vinci 32, Milano, Italy, 20133}
\email[G. Catino]{giovanni.catino@polimi.it}
\author[L. Cremaschi]{Laura Cremaschi}
\address[Laura Cremaschi]{Scuola Normale Superiore di Pisa, Piazza dei Cavalieri 7, Pisa, Italy, 56126}
\email[Laura Cremaschi]{laura.cremaschi@sns.it}
\author[Z. Djadli]{Zindine Djadli}
\address[Zindine Djadli]{Institut Fourier, 100 Rue des Maths, BP 74, St.~Martin~d'Heres, France, 38402 and Laboratorio Fibonacci, Piazza dei Cavalieri 7, Pisa, Italy, 56126}
\email[Z. Djadli]{Zindine.Djadli@ujf-grenoble.fr}
\author[C. Mantegazza]{Carlo Mantegazza}
\address[Carlo Mantegazza]{Dipartimento di Matematica, Universit\`a di
  Napoli ``Federico II'', Via Cintia, Monte S.~Angelo, Napoli, Italy,
  I--80126}
\email[C. Mantegazza]{c.mantegazza@sns.it}
\author[L. Mazzieri]{Lorenzo Mazzieri}
\address[Lorenzo Mazzieri]{Universit\`a degli Studi di Trento, Via Sommarive 15, Povo, Trento, Italy, I--38123}
\email[L. Mazzieri]{lorenzo.mazzieri@unitn.it}
\date{\today}
\begin{document}

\begin{abstract}
In this paper we present some results on a family of geometric flows introduced by 
J.~P.~Bourguignon in~\cite{jpb1} that generalize the Ricci flow. For suitable values of the scalar parameter involved in these flows, we prove short time existence and provide curvature estimates. We also state some results on the associated solitons. 
\end{abstract}

\maketitle
\setcounter{tocdepth}{1}
\tableofcontents

\section{Introduction}

In this paper we consider an $n$--dimensional, compact, smooth,
Riemannian manifold $M$ (without boundary) whose metric $g=g(t)$ is evolving according
to the flow equation
\begin{equation}\label{eqflow}
\dt g \,=\, -2\,\Ric +2\rho \RRR g =  -2(\Ric -\rho \RRR g) \,
\end{equation}
where $\Ric$ is the Ricci tensor of the manifold, $\RRR$ its scalar
curvature and $\rho$ is a real constant. This family of geometric 
flows contains, as a
special case, the Ricci flow, setting $\rho=0$. Moreover, by a suitable
rescaling in time, when $\rho$ is nonpositive, 
they can be seen as an interpolation between the Ricci
flow and the Yamabe flow (see~\cite{brendle1,struwschwe,ye1}, for instance), obtained as a limit when
$\rho\to-\infty$.

It should be noticed that for special values of the constant $\rho$ the
tensor $\Ric -\rho\RRR g$ appearing on the right hand side of the evolution 
equation is of special interest, in particular,
\begin{itemize}

\item $\rho=1/2$, the Einstein tensor $\Ric -\frac{\RRR }{2}g$,

\item $\rho=1/n$, the traceless Ricci tensor $\Ric -\frac{\RRR }{n}g$,

\item $\rho=1/2(n-1)$, the Schouten tensor $\Ric -\frac{\RRR }{2(n-1)}g$,

\item $\rho=0$, the Ricci tensor $\Ric$.

\end{itemize}  
In dimension two, the first three tensors are zero hence the flow is
static, and in higher dimension the values of $\rho$
are strictly ordered as above, in descending order.

Apart these special values of $\rho$, for which we will call the
associated flows as the name of the corresponding tensor, 
in general we will refer to the evolution equation defined by the PDE
system~\eqref{eqflow} as the  Ricci--Bourguignon flow (or shortly RB flow).

The study of these flows was proposed by
Jean--Pierre Bourguignon in~\cite[Question~3.24]{jpb1}, building on some unpublished work of Lichnerowicz in the sixties and a paper of Aubin~\cite{aubin}. 
In 2003, Fischer~\cite{fischer} studied a conformal version of this problem where the scalar curvature is constrained along the flow. In 2011, 
Lu, Qing and Zheng~\cite{LQZ} also proved some results on the conformal Ricci--Bourguignon flow. Some results concerning solitons of the Ricci--Bourguignon flow (called {\em gradient $\rho$--Einstein solitons}) can be found in~\cite{CaMa, CaMaMo}.

We will see in the next section that when $\rho$ is larger than
$1/2(n-1)$ the principal symbol of the operator in the right
hand side of the second order quasilinear parabolic PDE~\eqref{eqflow}
has negative eigenvalues, not allowing even a short time existence
result for the flow for general initial data (manifold $M$ and
initial metric $g_0$). On the contrary, the main task of
Section~\ref{short} will be to show that for any $\rho<1/2(n-1)$,
every initial compact Riemannian manifold $(M,g_0)$ has a unique smooth
solution $g(t)$ solving the flow equation~\eqref{eqflow}, with
$g(0)=g_0$, at least in a positive time interval.

However, the problem of knowing whether the ``critical''
{\em Schouten flow}
\begin{equation}\label{schflow}
\begin{cases}
\displaystyle{\,\,\dt g \,=\, -2\,\Ric + \frac{\RRR }{n-1}}g\\
\displaystyle{\,\,g(0)=g_0}
\end{cases}
\end{equation}
when $\rho=1/2(n-1)$, admits or not a short time solution
 for general initial manifolds and metrics remains open, when $n\geq 3$. 

We will see that if $\rho\leq 1/2(n-1)$, the principal symbol of the elliptic operator is nonnegative
definite and it actually contains some zero eigenvalues due 
to the diffeomorphism invariance of the geometric flow. When 
$\rho<1/2(n-1)$, these zero eigenvalues are the only ones, all the
others are actually positive, hence, they can be dealt (as it is customary by
now) by means of the so--called  DeTurck's trick~\cite{deturck,deturck2}. In the case of
the Schouten flow $\rho=1/2(n-1)$ instead, the principal symbol 
contains an extra zero eigenvalue besides the ones due
to the diffeomorphism invariance, preventing this argument to be
sufficient to conclude and to give a general short time existence result.\\
We mention that the presence of this extra zero eigenvalue 
should be expected, as the Cotton tensor, which is obtained from the Schouten tensor
as follows
$$
\CCC_{ijk} = \nabla_{k} \RRR_{ij} - \nabla_{j} \RRR_{ik}=\nabla_k\RRR_{ij} - \nabla_j\RRR_{ik} -
\frac{1}{2(n-1)} \big(\nabla_k\RRR g_{ij} - \nabla_j\RRR g_{ik} \big)\,,
$$
satisfies the following invariance under the conformal change of
metric $\widetilde{g} = e^{2u} g$,
$$
e^{3u} \widetilde{\CCC}_{ijk} = \CCC_{ijk} + (n-2) \WWW_{ijkl}  \nabla^l  u\,
$$
see~\cite[Equation~3.35]{CatMasMonRig}. Recently, Delay~\cite{delay}, following the work of Fischer and Marsden, gave some evidence on the fact that the DeTurck's trick should fail for the Schouten tensor.

In Section~3, we will compute the evolution equations for the
curvature. 

In Section~4, by means of the maximum principle, we derive, from the evolution of the curvature, some 
conditions on the curvature which are preserved by the RB flow. In particular, we show that the Hamilton--Ivey estimate in dimension three holds true.

In Section~5, we establish some \emph{a priori} estimates on the Riemann tensor and prove that, if a compact solution of the flow exists up to a finite maximal time $T$, then the Riemann tensor is unbounded when approaching to $T$.

Finally, in the last section we discuss the structure and the classification of the solitons of the RB flow.

\medskip

\begin{ackn} We want to thank Mauro Carfora for several suggestions
  and comments. The authors are partially supported by the GNAMPA
  project ``Equazioni di evoluzione geometriche e strutture di tipo Einstein''. Laura Cremaschi is partially supported by the PRIN 2010--2011 project ``Calcolo delle Variazioni''. Zindine Djadli is partially supported by the grant GTO: ANR-12-BS01-0004.
\end{ackn}

\subsection{Notation and preliminaries}\ \\
The Riemann curvature operator of an $n$--dimensional Riemannian manifold $(M,g)$ is defined 
as in~\cite{gahula} by
$$
\mathrm{Riem}(X,Y)Z=\nabla_{Y}\nabla_{X}Z-\nabla_{X}\nabla_{Y}Z+\nabla_{[X,Y]}Z\,,
$$
and we will denote with $d\mu_g$ the canonical volume measure
associated to the metric $g$.\\
In a local coordinate system the components of the $(3,1)$--Riemann 
curvature tensor are given by
$\RRR^{l}_{ijk}\tfrac{\partial}{\partial
  x^{l}}=\mathrm{Riem}\big(\tfrac{\partial}{\partial
  x^{i}},\tfrac{\partial}{\partial
  x^{j}}\big)\tfrac{\partial}{\partial x^{k}}$ and we denote by
$\RRR_{ijkl}=g_{lm}\RRR^{m}_{ijk}$ its $(4,0)$--version.\\
With this choice, for the sphere $\SS^n$ we have
${\mathrm{Riem}}(v,w,v,w)=\RRR_{ijkl}v^iw^jv^kw^l>0$.

The Ricci tensor is obtained as the contraction 
$\RRR_{ik}=g^{jl}\RRR_{ijkl}$ and $\RRR=g^{ik}\RRR_{ik}$ will 
denote the scalar curvature.

The so--called Weyl tensor is then 
defined by the decomposition formula (see~\cite[Chapter~3,
Section~K]{gahula}) of the Riemann tensor in dimension $n\geq 3$,
\begin{equation}\label{weyl}
\WWW_{ijkl}=\RRR_{ijkl}+\frac{\RRR}{(n-1)(n-2)}(g_{ik}g_{jl}-g_{il}g_{jk})
- \frac{1}{n-2}(\RRR_{ik}g_{jl}-\RRR_{il}g_{jk}
+\RRR_{jl}g_{ik}-\RRR_{jk}g_{il})\,.
\end{equation}
The tensor $\WWW$ satisfies all the symmetries of the curvature tensor
and all its traces with the metric are zero, 
as it can be easily seen from the above formula.\\
In dimension three, $\WWW$ is identically zero for every Riemannian
manifold $(M,g)$, and it becomes relevant instead when $n\geq 4$ since
it vanishes if and only if $(M,g)$ is locally conformally flat. This latter condition means that around every point $p\in M$ there is a conformal
deformation $\widetilde{g}_{ij}=e^fg_{ij}$ of the original metric $g$,
such that the new metric is flat, 
namely, the Riemann tensor associated to
$\widetilde{g}$ is zero in $U_p$ (here $f:U_p\to\RR$ is a smooth function defined in a open
neighborhood $U_p$ of $p$).

\section{Short time existence}\label{short}

\begin{teo}\label{short time} Let $\rho<1/2(n-1)$. 
Then, the evolution equation~\eqref{eqflow} has a unique solution for a positive
time interval on any smooth, $n$--dimensional, compact Riemannian manifold $M$ (without boundary) 
for any initial metric $g_0$.
\end{teo}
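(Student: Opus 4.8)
The plan is to use the classical DeTurck trick~\cite{deturck}, suitably adapted to the presence of the scalar--curvature term in~\eqref{eqflow}. As for the Ricci flow, the right--hand side of~\eqref{eqflow} fails to be parabolic because of the invariance under diffeomorphisms; the new feature is that the term $2\rho\,\RRR\,g$ contributes to the principal symbol as well, and whether the gauge--fixed equation is parabolic turns out to be exactly the point where the hypothesis $\rho<1/2(n-1)$ enters. This symbol analysis is the heart of the argument; a secondary, but genuine, subtlety is that the gauge--fixed symbol, although diagonalisable with positive eigenvalues, need not be symmetric when $\rho>0$, so that one has to work with parabolicity in the sense of Petrowsky rather than with the Legendre--Hadamard condition.

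First I would linearise the operator $P(g):=-2\Ric(g)+2\rho\,\RRR(g)\,g$ at a metric $g$ and compute, from the usual formulas for $D\Ric$ and $D\RRR$, its principal symbol in a nonzero covector $\xi$, acting on a symmetric $2$--tensor $h$:
$$
\sigma_\xi(DP)(h)_{ij}\;=\;|\xi|^2 h_{ij}+(\trace_g h)\,\xi_i\xi_j-\xi_i h_{jk}\xi^k-\xi_j h_{ik}\xi^k+2\rho\big(h_{kl}\xi^k\xi^l-|\xi|^2\trace_g h\big)g_{ij}\,.
$$
Decomposing the symmetric $2$--tensors at a point under the stabiliser of $\xi$ (concretely, in an orthonormal frame in which $\xi$ is the first covector and $|\xi|=1$), a direct computation shows that this endomorphism is diagonalisable, with eigenvalue $0$ of multiplicity $n$ on the ``gauge'' subspace $\{\xi_i\omega_j+\xi_j\omega_i:\omega\}$ produced by diffeomorphism invariance, eigenvalue $1$ on a complementary subspace, and a single extra eigenvalue $1-2\rho(n-1)$ coming from the term $2\rho\,\RRR\,g$. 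Therefore, transversally to the gauge directions, the symbol is parabolic if and only if $1-2\rho(n-1)>0$, i.e. $\rho<1/2(n-1)$; for $\rho=1/2(n-1)$ this extra eigenvalue vanishes (the obstruction recalled in the Introduction), and for $\rho>1/2(n-1)$ it becomes negative.

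Next I would break the gauge following DeTurck. Fix a smooth background metric $\bar g$ on $M$, set $W^k=W^k(g):=g^{pq}\big(\Gamma_{pq}^k(g)-\Gamma_{pq}^k(\bar g)\big)$, and consider the \emph{RB--DeTurck flow}
$$
\pt g\;=\;-2\Ric(g)+2\rho\,\RRR(g)\,g+\mathcal{L}_{W(g)}g\,,\qquad g(0)=g_0\,.
$$
The principal symbol of $h\mapsto\mathcal{L}_{W(g)}g$ is $\xi_i h_{jk}\xi^k+\xi_j h_{ik}\xi^k-(\trace_g h)\,\xi_i\xi_j$, so adding this term cancels the two gauge terms above and the symbol of the modified operator becomes $h_{ij}\mapsto|\xi|^2 h_{ij}+2\rho\big(h_{kl}\xi^k\xi^l-|\xi|^2\trace_g h\big)g_{ij}$, with eigenvalue $1$ on the hyperplane $\{h_{kl}\xi^k\xi^l=|\xi|^2\trace_g h\}$ and eigenvalue $1-2\rho(n-1)$ on the line spanned by $g$, all strictly positive under the hypothesis. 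Thus the RB--DeTurck flow is a quasilinear parabolic system (strongly parabolic when $\rho\le0$, parabolic in the Petrowsky sense in general), and the standard theory of such systems on the compact manifold $M$ provides a unique smooth solution $\widehat g(t)$ on some interval $[0,T)$, $T>0$, with $\widehat g(0)=g_0$. To recover a solution of~\eqref{eqflow}, let $\varphi_t\colon M\to M$ solve $\pt\varphi_t=-W\big(\widehat g(t)\big)\circ\varphi_t$ with $\varphi_0=\mathrm{id}_M$ (an ODE on the compact $M$, solvable on all of $[0,T)$), and set $g(t):=\varphi_t^{*}\,\widehat g(t)$; using that $\Ric$ and $\RRR$ are natural one obtains
$$
\pt g\;=\;\varphi_t^{*}\big(\pt\widehat g-\mathcal{L}_W\widehat g\big)\;=\;\varphi_t^{*}\big(-2\Ric(\widehat g)+2\rho\,\RRR(\widehat g)\,\widehat g\big)\;=\;-2\Ric(g)+2\rho\,\RRR(g)\,g\,,
$$
which, together with $g(0)=g_0$, gives existence.

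For uniqueness, given two smooth solutions $g_1,g_2$ of~\eqref{eqflow} on a common interval with $g_i(0)=g_0$, I would run, for each $i$, the harmonic map heat flow starting at the identity with time--dependent domain metric $g_i(t)$, namely $\pt\varphi^i_t=\Delta_{g_i(t),\bar g}\varphi^i_t$ with $\varphi^i_0=\mathrm{id}_M$ (a strictly parabolic problem, hence locally solvable), and use the standard identities relating the tension field of $\varphi^i_t$ to the DeTurck vector field to see that $\widehat g_i(t):=(\varphi^i_t)_{*}g_i(t)$ solves the RB--DeTurck flow with initial value $g_0$, while simultaneously $\varphi^i_t$ is the flow of $-W(\widehat g_i(t))$. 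By the uniqueness of the previous step $\widehat g_1\equiv\widehat g_2$, hence $\varphi^1_t\equiv\varphi^2_t$ by uniqueness for the generating ODE, and therefore $g_1=(\varphi^1_t)^{*}\widehat g_1=(\varphi^2_t)^{*}\widehat g_2=g_2$. The main obstacle is the symbol computation of the second step, whose result is precisely what isolates the admissible range $\rho<1/2(n-1)$; the harmonic--map--heat--flow coupling and the push--forward identities needed for uniqueness are, by now, routine.
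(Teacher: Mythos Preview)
Your proof is correct and follows essentially the same route as the paper: linearise $-2\Ric+2\rho\RRR g$, compute its principal symbol, identify the $n$ gauge zeros and the extra eigenvalue $1-2(n-1)\rho$, apply the DeTurck modification to kill the gauge directions, and invoke parabolic theory for existence and the harmonic--map--heat--flow argument for uniqueness. The symbol formulas and eigenvalue multiplicities you obtain match the paper's matrix computation via Lemma~\ref{det_A}. One point on which you are actually more careful than the paper: the gauge--fixed symbol is not symmetric for $\rho\neq0$, so ``strongly elliptic'' is not literally the right notion; your remark that one needs Petrowsky parabolicity (diagonalisable symbol with positive eigenvalues) is the correct formulation, and the paper glosses over this.
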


\begin{proof}
We first compute the linearized operator $DL_{g_0}$ of the operator
$L=-2(\Ric-\rho\RRR g)$ at a metric $g_0$.
The Ricci tensor and the scalar curvature have the following
linearizations, see~\cite[Theorem~1.174]{besse} or~\cite{topping1},
where we use the metric $g_0$ to lower and raise indices and to take traces,
\begin{eqnarray*}
D\Ric_{g_0}(h)_{ik}&=& \frac{1}{2}\Big(-\Delta
h_{ik}-\nabla_i\nabla_k\tr(h)
+\nabla_i\nabla^th_{tk}+\nabla_k\nabla^th_{it}\Big)+{\mathrm {LOT}},\\
D\RRR_{g_0}(h)&=& -\Delta(\tr h)+\nabla^s\nabla^t h_{st}+{\mathrm {LOT}},
\end{eqnarray*}
here ${\mathrm {LOT}}$ stands for {\em lower order terms}.\\
Then, the linearization of $L$ at $g_0$ is given by 
\begin{eqnarray*}
DL_{g_0}(h)_{ik}&=&-2\big(D\Ric_{g_0}(h)_{ik}-\rho
D\RRR_{g_0}(h)(g_0)_{ik}\big)+2\rho\RRR_{g_0}h_{ik}\\
&=& \Delta
h_{ik}+\nabla_i\nabla_k\tr(h)-\nabla_i\nabla^th_{tk}-\nabla_k\nabla^th_{it}
-2\rho\big(\Delta(\tr h)-\nabla^s\nabla^t h_{st}\big)(g_{0})_{ik}+{\mathrm {LOT}},
\end{eqnarray*}
for every bilinear form $h\in\Gamma(S^2M)$.
Now, we obtain the principal symbol of the linearized operator in the direction
of an arbitrary cotangent vector $\xi$ by replacing each covariant derivative
$\nabla_{\alpha}$, appearing in the higher order terms, with the corresponding component $\xi_{\alpha}$, 
\begin{eqnarray*}
\sigma_{\xi}(DL_{g_0})(h)_{ik} &=&
\xi^t\xi_th_{ik}+\xi_i\xi_k\tr_{g_0}(h)-\xi_i\xi^th_{kt}-\xi_k\xi^th_{it}\\
&&-2\rho\xi^t\xi_t\tr_{g_0}(h)(g_0)_{ik}+2\rho\xi^t\xi^sh_{ts}(g_0)_{ik}\,.
\end{eqnarray*}
As usual, since the symbol is homogeneous we can assume that $|\xi|_{g_0}=1$ and we perform all the computations in an 
orthonormal basis $\{e_i\}_{i=1,\dots, n}$ of $T_pM$ such that
$\xi=g_0(e_1,\cdot)$, that is $\xi_i=0$ for $i\neq 1$.\\
Hence, we obtain,
\begin{eqnarray*}
\sigma_{\xi}(DL_{g_0})(h)_{ik} &=&
h_{ik}+\delta_{i1}\delta_{k1}\tr_{g_0}(h)-\delta_{i1}h_{k1}-\delta_{k1}h_{i1}
-2\rho\tr_{g_0}(h)\delta_{ik}+2\rho h_{11}\delta_{ik}.
\end{eqnarray*}
that can be represented, in the coordinates system
\begin{equation*}
(h_{11}, h_{22}, \dots, h_{nn}, h_{12}, \dots, h_{1n}, h_{23}, h_{24}, \dots, h_{n-1,n})
\end{equation*}
for any $h\in\Gamma(S^2M)$, by the following matrix

\begin{equation*}
\sigma_{\xi}(DL_{g_0})=
\left(\begin{array}{c|c|c} 
\begin{array}{cccc}
0 & 1-2\rho &\dots & 1-2\rho  \\
\vdots &  & A[n-1] & \\
0 &  &\phantom{ A[n-1]}   & 
\end{array}& 0 & \phantom{aaa}0\phantom{aaa} \\
  \hline
\phantom{aaa} & \phantom{aaa} & \phantom{aaa} \\
0 & 0 & 0\\
\phantom{aaa} & \phantom{aaa} & \phantom{aaa} \\
\hline
\phantom{aaa} & \phantom{aaa} & \phantom{aaa} \\
0 & 0 & {\text{\Large{${\mathrm {Id}_{(n-1)(n-2)/2}\,\,\,\,}$}}}\\
\phantom{aaa} & \phantom{aaa} & \phantom{aaa}
\end{array}\right)
\end{equation*}
where $A[n-1]$ is the $(n-1)\times(n-1)$ matrix given by
\begin{equation*}
A[n-1]=\left(
\begin{array}{cccc}
 1-2\rho & -2\rho & \dots & -2\rho \\
 -2\rho & 1-2\rho &\dots & -2\rho\\
 \vdots & \vdots &\ddots&\vdots\\
-2\rho & -2\rho &\dots& 1-2\rho
\end{array}
\right)\,.
\end{equation*}
We can see that there are at least $n$ null eigenvalues, as it
should be expected by the diffeomorphisms invariance of the operator
$L$, and $(n-1)(n-2)/2$ eigenvalues equal to $1$. The remaining $n-1$
eigenvalues can be computed by the following lemma which is easily
proved by induction on the dimension of $A$.
\begin{lemma}\label{det_A}
Let $A[m]$ be the $m\times m$ matrix
\begin{equation}\label{matrix_A}
A[m]=\left(
\begin{array}{cccc}
 1-2\rho & -2\rho & \dots & -2\rho \\
 -2\rho & 1-2\rho &\dots & -2\rho\\
 \vdots & \vdots &\ddots&\vdots\\
-2\rho & -2\rho &\dots& 1-2\rho
\end{array}
\right)\,.
\end{equation}
Then, there holds
\begin{equation*}
\det(A[m]-\lambda \mathrm{Id}_m)=(1-\lambda)^{(m-1)}(1-2m\rho-\lambda)\,.
\end{equation*}
\end{lemma}

Using this lemma, we conclude that the eigenvalues of the principal
symbol of $DL_{g_0}$ are $0$ with multiplicity $n$, $1$ with
multiplicity $\frac{(n+1)(n-2)}{2}$ and $1-2(n-1)\rho$ with multiplicity
$1$.

Now we apply the so--called {\em DeTurck's
  trick}~\cite{deturck,deturck2} 
to show that the RB flow is equivalent to a Cauchy problem for a strictly 
parabolic operator, modulo the action of the diffeomorphism group of
$M$. Let $V:\Gamma(S^2M)\to\Gamma(TM)$ be ``DeTurck's'' vector field defined by 
\begin{equation}\label{eq_V}
V^j(g)=-g_0^{jk}g^{pq}\nabla_p\Big(\frac{1}{2}\tr_g(g_0)g_{qk}-(g_0)_{qk}\Big)
=-\frac{1}{2}g_0^{jk}g^{pq}\big(\nabla_k(g_0)_{pq}-\nabla_p(g_0)_{qk}-\nabla_q(g_0)_{pk}\big)\,,
\end{equation}
where $g_0$ is a fixed Riemannian metric on $M$ and $g_0^{jk}$ are the components of the inverse matrix of $g_0$.\\
The DeTurck's trick (see~\cite{deturck,deturck2} for details) states
that in order to show the smooth existence part of the theorem, we
only need to check that the operator $D(L-\mathscr{L}_V)_{g_0}$ is strongly
elliptic, where $\mathscr{L}_V$ is the Lie derivative operator in the
direction of $V$.\\
The principal symbol of this latter operator, with the same notations 
used above, is well known and is given by
\begin{equation}
\sigma_{\xi}(D\mathscr{L}_V)_{g_0}(h)_{ik} =
\delta_{i1}\delta_{k1}\tr_{g_0}(h)-\delta_{i1}h_{k1}-\delta_{k1}h_{i1}.\nonumber
\end{equation}
Then, we can easily see that the linearized DeTurck--Ricci--Bourguignon
operator has principal symbol in the  direction $\xi$, with respect to
an orthonormal basis $\{\xi^{\flat}, e_2,\dots,e_n\}$, given by
\begin{equation*}
\sigma_{\xi}((D(L-\mathscr{L}_V)_{g_0})=
\left(\begin{array}{c|c|c} 
\begin{array}{cccc}
1 & -2\rho &\dots & -2\rho  \\
\vdots &  & A[n-1] & \\
0 &  &   & 
\end{array}& 0 & \phantom{aaa}0\phantom{aaa} \\
  \hline
\phantom{aaa} & \phantom{aaa} & \phantom{aaa} \\
0 & {\text{\Large{${\mathrm {Id}_{(n-1)}\,\,\,\,}$}}} & 0\\
\phantom{aaa} & \phantom{aaa} & \phantom{aaa} \\
\hline
\phantom{aaa} & \phantom{aaa} & \phantom{aaa} \\
0 & 0 & {\text{\Large{${\mathrm {Id}_{(n-1)(n-2)/2}\,\,\,\,}$}}}\\
\phantom{aaa} & \phantom{aaa} & \phantom{aaa}
\end{array}\right)\,,
\end{equation*}
expressed in the coordinates system
\begin{equation*}
(h_{11},  h_{22}, \dots, h_{nn}, h_{12}, h_{13}, \dots, h_{1n}, h_{23}, h_{24}, \dots, h_{n-1,n})
\end{equation*}
for any $h\in\Gamma(S^2M)$.\\
Using Lemma~\ref{det_A} again, this matrix has $\frac{n(n+1)}{2}-1$ eigenvalues
equals to $1$ and $1$ eigenvalue equal to $1-2(n-1)\rho$. Therefore,
by the DeTurck's trick, a sufficient condition for the existence of a solution is that 
$\rho<\frac{1}{2(n-1)}$.

 The uniqueness part of the theorem is proven in the same way as for the Ricci flow (see~\cite{hamilton9}). The RB flow is equivalent, via the one parameter group of diffeomorphisms generated by DeTurck's vector field, to the DeTurck--RB flow which is strictly parabolic. On the other hand, the one parameter group of diffeomorphisms satisfies the
 harmonic map flow introduced by Eells and Sampson in~\cite{eelsam}, which is also parabolic. These two facts allow to state the uniqueness of the solution for the RB flow (see~\cite[Chapter~3, Section~4]{chknopf} for more details). 
\end{proof}

\section{Evolution of the curvature}

\subsection{The evolution of curvature}

As the metric tensor evolves as 
\begin{equation*}
\dt g_{ij} \,=\, -2(\RRR_{ij} -\rho \RRR g_{ij}) \,,
\end{equation*}
it is easy to see, differentiating the identity
$g_{ij}g^{jl}=\delta_i^l$, that
\begin{equation}\label{eqflow-1}
\dt g^{jl} \,=\, 2(\Ric^{jl} -\rho \RRR g^{jl}) \,.
\end{equation}
It follows that the canonical volume measure $\mu$ satisfies
\begin{equation*}
\frac{d\mu}{dt}=\dt \sqrt{\det g_{ij}}\,\mathscr{L}^n=
\frac{\sqrt{\det g_{ij}}
    g^{ij}\dt g_{ij}}{2}\,\mathscr{L}^n=(n\rho-1)\RRR\sqrt{\det g_{ij}}\,\mathscr{L}^n=
(n\rho-1)\RRR \,\mu\,.
\end{equation*}

Computing in a normal coordinates system, the evolution equation for the
Christoffel symbols is given by
\begin{align*}
\frac{\partial\,}{\partial t}\Gamma_{jk}^i
=&\,\frac{1}{2}g^{il}\left\{
\frac{\partial\,}{\partial x_j}\left(\frac{\partial\,}{\partial t}g_{kl}\right) + 
\frac{\partial\,}{\partial x_k}\left(\frac{\partial\,}{\partial t}g_{jl}\right) - 
\frac{\partial\,}{\partial x_l}\left(\frac{\partial\,}{\partial
    t}g_{jk}\right)\right\}\\
&\,+\frac{1}{2}\frac{\partial\,}{\partial t}g^{il}\left\{
\frac{\partial\,}{\partial x_j}g_{kl} + 
\frac{\partial\,}{\partial x_k}g_{jl} - 
\frac{\partial\,}{\partial x_l}g_{jk}\right\}\\
=&\,\frac{1}{2}g^{il}\left\{
\nabla_j\left(\frac{\partial\,}{\partial t}g_{kl}\right) + 
\nabla_k\left(\frac{\partial\,}{\partial t}g_{jl}\right) - 
\nabla_l\left(\frac{\partial\,}{\partial t}g_{jk}\right)\right\}\\
=&\,-g^{il}\left\{
\nabla_j(\RRR_{kl}-\rho\RRR g_{kl}) + 
\nabla_k(\RRR_{jl}-\rho\RRR g_{jl}) - 
\nabla_l(\RRR_{jk}-\rho\RRR g_{jk})\right\}\\
=&\,
-\nabla_j\RRR_k^i
-\nabla_k\RRR_j^i
-\nabla^i\RRR_{jk}
+\rho(
\nabla_j\RRR\delta^i_k + 
\nabla_k\RRR\delta_j^i +
\nabla^i\RRR g_{jk})\,.
\end{align*}

\begin{prop}\label{eqrmprop} Along the RB flow on a $n$--dimensional Riemannian manifold $(M,g)$, the curvature tensor, the Ricci tensor and the scalar curvature satisfy the following evolution equations
\begin{eqnarray}\label{eqrm}
\dt \RRR_{ijkl} &=& \Delta \RRR_{ijkl} + 2(\BBB_{ijkl}-\BBB_{ijlk}-\BBB_{iljk}+\BBB_{ikjl}) \\ \nonumber
&&-\,g^{pq}\big(\RRR_{pjkl}\RRR_{qi}+\RRR_{ipkl}\RRR_{qj}+\RRR_{ijpl}\RRR_{qk}+\RRR_{ijkp}\RRR_{ql}\big)\\\nonumber
&&-\,\rho\big(\nabla_{i}\nabla_{k}\RRR\,g_{jl}-\nabla_{i}\nabla_{l}\RRR\,g_{jk}-\nabla_{j}\nabla_{k}\RRR\,g_{il}+\nabla_{j}\nabla_{l}\RRR\,g_{ik}\big)\\\nonumber
&&+\,2\rho\RRR\, \RRR_{ijkl}\,,
\end{eqnarray}
where the tensor $\BBB$ is defined as $\BBB_{ijkl}=g^{pq}g^{rs}\RRR_{ipjr}\RRR_{kqls}$.
\begin{eqnarray}\label{eqrc}
\dt \RRR_{ik} &=& \Delta \RRR_{ik} + 2g^{pq}g^{rs}\RRR_{pirk}\RRR_{qs}-2g^{pq}\RRR_{pi}\RRR_{qk}\\\nonumber
&&-(n-2)\rho \nabla_{i}\nabla_{k}\RRR -\rho \Delta\RRR\, g_{ik}\,,
\end{eqnarray}
\begin{eqnarray}\label{eqsc}
\dt\RRR &=& \big(1-2(n-1)\rho\big)\Delta\RRR +2|\Ric|^{2}-2\rho\RRR^{2}\,.
\end{eqnarray}
\end{prop}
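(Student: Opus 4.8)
The plan is to derive each of the three evolution equations by a direct computation, starting from the evolution of the full Riemann tensor and then tracing down. I would begin by recalling the standard Ricci-flow-type variation formula for the $(3,1)$ Riemann tensor under a general metric variation $\partial_t g_{ij} = h_{ij}$; specializing to the Ricci flow this gives Hamilton's reaction--diffusion equation, and here with $h_{ij} = -2(\RRR_{ij} - \rho\RRR g_{ij})$ there will be the usual ``$\Delta\RRR_{ijkl} + $ quadratic curvature'' terms plus correction terms coming from the $\rho\RRR g_{ij}$ piece. Concretely, I would use the formula
\begin{equation*}
\partial_t \RRR^l_{ijk} = \tfrac{1}{2}g^{lm}\big(\nabla_i\nabla_j h_{km} + \nabla_i\nabla_k h_{jm} - \nabla_i\nabla_m h_{jk} - \nabla_j\nabla_i h_{km} - \nabla_j\nabla_k h_{im} + \nabla_j\nabla_m h_{ik}\big)\,,
\end{equation*}
which follows by differentiating $\RRR^l_{ijk}$ in terms of the Christoffel symbols (whose evolution is computed just above the statement). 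Lowering an index with $g_{lm}$ produces an extra term $\tfrac{1}{2}h_{lm}\RRR^m_{ijk}$, which upon symmetrization over the index quadruple is exactly what gives rise to the $-g^{pq}(\RRR_{pjkl}\RRR_{qi} + \cdots)$ terms together with the $+2\rho\RRR\,\RRR_{ijkl}$ term.

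Next I would organize the higher-order terms. The part of $h$ equal to $-2\RRR_{ij}$ reproduces, after using the second Bianchi identity and commuting covariant derivatives (picking up quadratic curvature terms, here denoted by the tensor $\BBB$), the standard Ricci flow evolution $\Delta\RRR_{ijkl} + 2(\BBB_{ijkl} - \BBB_{ijlk} - \BBB_{iljk} + \BBB_{ikjl})$ minus the Ricci-contraction terms. The new part of $h$ equal to $+2\rho\RRR g_{ij}$ contributes terms of the form $\rho(\nabla_i\nabla_j(\RRR g_{km}) + \cdots)$; since $\nabla g = 0$, these reduce to $\rho(\nabla_i\nabla_k\RRR\, g_{jl} - \nabla_i\nabla_l\RRR\, g_{jk} - \cdots)$, matching the third line of \eqref{eqrm} up to the sign bookkeeping of the six-term second-derivative combination. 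The bulk of the labor is this sign-and-symmetry bookkeeping: keeping track of which pairs of indices are symmetric/antisymmetric in the Riemann tensor, and checking that the Laplacian really emerges (this uses that $g^{lm}\nabla_i\nabla_m h_{jk}$-type terms combine, after a Bianchi/Ricci-identity manipulation, into $\Delta\RRR_{ijkl}$).

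For \eqref{eqrc} I would trace \eqref{eqrm} with $g^{jl}$, remembering that $g^{jl}$ itself evolves by \eqref{eqflow-1}, so $\partial_t\RRR_{ik} = g^{jl}\partial_t\RRR_{ijkl} + (\partial_t g^{jl})\RRR_{ijkl}$; the extra term $2(\RRR^{jl} - \rho\RRR g^{jl})\RRR_{ijkl}$ will cancel against pieces of the traced right-hand side (this cancellation is the standard one for the Ricci flow, extended by the $\rho$ terms). The $\rho$-dependent second-derivative block traces to $-(n-2)\rho\nabla_i\nabla_k\RRR - \rho\Delta\RRR\, g_{ik}$: indeed $g^{jl}(\nabla_i\nabla_k\RRR\, g_{jl} - \nabla_i\nabla_l\RRR\, g_{jk} - \nabla_j\nabla_k\RRR\, g_{il} + \nabla_j\nabla_l\RRR\, g_{ik}) = n\nabla_i\nabla_k\RRR - \nabla_i\nabla_k\RRR - \nabla_i\nabla_k\RRR + \Delta\RRR\, g_{ik} = (n-2)\nabla_i\nabla_k\RRR + \Delta\RRR\, g_{ik}$, and the overall sign in \eqref{eqrm} flips this to the claimed expression. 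Finally, \eqref{eqsc} follows by tracing \eqref{eqrc} with $g^{ik}$, again accounting for $\partial_t g^{ik}$: this yields $\partial_t\RRR = \Delta\RRR + 2|\Ric|^2 - 2\rho(n-1)\Delta\RRR - \rho\Delta\RRR - 2\rho\RRR^2 + \text{(cancelling terms)}$, i.e. $(1 - 2(n-1)\rho - ?)\Delta\RRR$; I would double-check the coefficient, since from the Ricci equation the trace of $-(n-2)\rho\nabla_i\nabla_k\RRR - \rho\Delta\RRR g_{ik}$ is $-(n-2)\rho\Delta\RRR - n\rho\Delta\RRR = -2(n-1)\rho\Delta\RRR$, which combined with the $\Delta\RRR$ from $g^{ik}\Delta\RRR_{ik}$ and the $\partial_t g^{ik}$ contribution gives exactly the stated $(1 - 2(n-1)\rho)\Delta\RRR$, and the $+2|\Ric|^2$ and $-2\rho\RRR^2$ come along unchanged after the standard $\RRR^{ik}\RRR_{ik}$ manipulations. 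The main obstacle throughout is not conceptual but the careful tracking of the many index symmetrizations in passing from the $(3,1)$ to the $(4,0)$ Riemann tensor and from there to its traces, plus verifying the cancellations that turn the raw second-covariant-derivative combinations into honest Laplacians.
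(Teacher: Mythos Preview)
Your proposal is correct and follows essentially the same route as the paper: start from the general first-variation formula for the Riemann tensor, substitute $h=-2(\Ric-\rho\RRR g)$, rewrite the second-derivative block as $\Delta\RRR_{ijkl}$ plus quadratic curvature terms via the second Bianchi identity and the commutation formula, and then trace twice (accounting for $\partial_t g^{jl}$) to obtain the Ricci and scalar evolutions. The only cosmetic differences are that the paper quotes the $(4,0)$ variation formula directly from Besse rather than computing the $(3,1)$ version and lowering an index, and treats $h$ as a whole rather than splitting off the known Ricci-flow part, but the substance of the computation is identical.
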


\begin{proof}
The following computation is analogous to the one for the Ricci flow
performed by Hamilton~\cite{hamilton1}.\\
By the first variation formula for the $(4,0)$--Riemann tensor
(see~\cite[Theorem~1.174]{besse} or~\cite{topping1}), we have in general
\begin{align*}
\dt\Rm(X,Y,W,&Z)=\frac{1}{2}\big(h(\mathrm{Riem}(X,Y)W,Z)-h(\mathrm{Riem}(X,Y)Z,W)\big)\\
& -\frac{1}{2}\big(-\nabla^2_{Y,W}h(X,Z)-\nabla^2_{X,Z}h(Y,W)+\nabla^2_{X,W}h(Y,Z)+\nabla^2_{Y,Z}h(X,W)\big)\,,
\end{align*}
where $X,Y,W,Z\in\Gamma(TM)$ are vector fields and $h=\dt g$.\\
Along the RB flow $h=-2(\Ric-\rho\RRR g)$, therefore
\begin{align*}
\dt\Rm(X&,Y,W,Z)=-\Ric(\mathrm{Riem}(X,Y)W,Z)+\Ric(\mathrm{Riem}(X,Y)Z,W)\\
&-\nabla^2_{Y,W}\Ric(X,Z)-\nabla^2_{X,Z}\Ric(Y,W)+\nabla^2_{X,W}\Ric(Y,Z)+\nabla^2_{Y,Z}\Ric(X,W)\\
&-\rho\big(-\nabla^2_{Y,W}\RRR g(X,Z)-\nabla^2_{X,Z}\RRR g(Y,W)+\nabla^2_{X,W}\RRR g(Y,Z)+\nabla^2_{Y,Z}\RRR g(X,W)\big)\\
&+2\rho\RRR\Rm(X,Y,W,Z)\,.
\end{align*}
Using the second Bianchi identity and the commutation formula for
second covariant derivatives, we obtain the following equation for the Laplacian of the
Riemann tensor,
\begin{eqnarray*}
\Delta \Rm(X,Y,W,Z)&=& -\nabla^2_{Y,W}\Ric(X,Z)-\nabla^2_{X,Z}\Ric(Y,W)\\
&&+\nabla^2_{X,W}\Ric(Y,Z)+\nabla^2_{Y,Z}\Ric(X,W)\\
&& -\Ric(\mathrm{Riem}(W,Z)Y,X)+\Ric(\mathrm{Riem}(W,Z)X,Y)\\
&& -2\big(\BBB(X,Y,W,Z)-\BBB(X,Y,Z,W)\\
&& +\BBB(X,W,Y,Z)-\BBB(X,Z,Y,W)\big)\,.
\end{eqnarray*}
Plugging it in the evolution equation, we obtain
\begin{eqnarray*}
\dt\Rm(X,Y,W,Z)&=& \Delta\Rm(X,Y,W,Z)-\rho\big(\nabla^2\RRR\varowedge g)(X,Y,W,Z)\\
&& +2\big(\BBB(X,Y,W,Z)-\BBB(X,Y,Z,W)\\
&& +\BBB(X,W,Y,Z)-\BBB(X,Z,Y,W)\big)\\
&& -\Ric(\mathrm{Riem}(X,Y)W,Z)+\Ric(\mathrm{Riem}(X,Y)Z,W)\\
&&-\Ric(\mathrm{Riem}(W,Z)X,Y)+\Ric(\mathrm{Riem}(W,Z)Y,X)\\
&&+2\rho\RRR\Rm(X,Y,W,Z)\,,
\end{eqnarray*}
which is formula~\eqref{eqrm} once written in coordinates. Here 
the symbol $\varowedge$ denotes the Kulkarni--Nomizu product of
two symmetric bilinear forms $p$ and $q$, defined by
\begin{equation*}                                                               
(p\varowedge q)(X,Y,Z,T)=
p(X,Z)q(Y,T)+p(Y,T)q(X,Z)-p(X,T)q(Y,Z)-p(Y,Z)q(X,T)\,,
\end{equation*}
for every tangent vectors fields $X,Y,Z,T\in\Gamma(TM)$.

Taking into account the evolution equation for the inverse of the
metric~\eqref{eqflow-1},  contracting equation~\eqref{eqrm} and using again the second Bianchi
identity, formula~\eqref{eqrc} follows (see~\cite{hamilton1} for details).
Contracting again one gets the evolution equation~\eqref{eqsc} for the
scalar curvature.
\end{proof}

\subsection{The Uhlenbeck's trick and the evolution of the curvature operator}\label{uhlenbeck}\ \\
In this subsection we want to study the evolution equation of the
curvature operator, as it was done for the Ricci flow by
Hamilton in~\cite{hamilton2}.\\
First of all, we simplify the expression of the reaction term in
equation~\eqref{eqrm} by means of the so called Uhlenbeck's trick~\cite{hamilton2}. Briefly, we will 
relate the curvature tensor of the evolving metric to an evolving
tensor of an abstract bundle with the same symmetries of the curvature
(see Proposition~\ref{proppm}) and a nicer evolution equation;
afterwards we will find a suitable orthonormal moving frame of $(TM,
g(t))$ and write the evolution equation of the coordinates of the
Riemann tensor with respect to this frame. The result will be 
a system of {\em scalar} evolution equations and no more a
tensorial equation, (see~\cite{chknopf} for more details on this method in the case of Ricci flow).\\

Let $\big(M, g(t)\big)_{t\in[0,T)}$ be the solution of the RB flow with initial data $g_0$ and consider on the tangent bundle $TM$ the family of endomorphisms  $\big\{\varphi(t)\big\}_{t\in [0, T)}$ defined by the following evolution equation 
\begin{equation}\label{evol_isom}
\left\{
\begin{array}{l}
 \Derpar{t}  \varphi(t)=\Ric_{g(t)}^{\#}\circ\varphi(t) -\rho\RRR_{g(t)}\varphi(t)\,, \\
\varphi(0)=\Id_{TM}\,,
\end{array}
\right.
\end{equation}
where $\Ric_{g(t)}^{\#}$ is the endomorphism of the tangent bundle
canonically associated to the Ricci tensor by raising an index.

For every point $p$ of the manifold $M$, the evolution equation~\eqref{evol_isom}
represents a system of linear ODEs on the fiber $T_pM$, therefore a
unique solution exists as long as the RB flow exists. Moreover, if we
let $(h(t))_{t\in[0,T)}$ be the family of bundle metrics defined by
$h(t)=\varphi(t)^*(g(t))$, where $\varphi(t)$ satisfies 
system~\eqref{evol_isom}, then $h(t)=g_0$ for every $t\in[0,T)$. As 
$$
\forall t\in[0,T), \qquad \varphi(t): (TM, g_0)\to(TM, g(t))
$$
is a bundle isometry, the pull--back via $\varphi(t)$ of the
Levi--Civita connection associated to $g(t)$ 
is a connection on $TM$ compatible with the metric $g_0$. In the
following, we will denote by $(V,h)$ the vector bundle $(TM,g_0)$ 
in order to stress out the fact that we are not considering the Levi--Civita 
connection associated to $g_0$, but the family of
time--dependent connections $D(t)$ defined via the bundle isometries
$\varphi(t)$.

The following lemma states some basic properties of these pull--back
connections.

\begin{lemma}\label{pullbackD}(see~\cite[Chapter~6, Section~2]{chknopf}
Let $D(t):\Gamma(TM)\times\Gamma(V)\to\Gamma(V)$ be the pull--back connection defined by 
\begin{equation*}
D(t)_X\zeta=\varphi(t)^*\big(\nabla^{g(t)}_X\big(\varphi(t)(\zeta)\big)\big)\,,
\end{equation*}
$\forall  t\in[0,T)\,, \forall X\in\Gamma(TM),\forall \zeta\in\Gamma(V)$, where $\nabla^{g(t)}$ is the Levi--Civita connection of $g(t)$.\\
Let again $D(t)$ be the canonical extension to the tensor powers of
$V$ and $T$ be a covariant tensor on $M$. 
Then, for every $t\in[0,T)$ and $X\in\Gamma(TM)$ there holds
$$
D(t)_X\big(\varphi(t)^* (T)\big)=\varphi(t)^*\big(\nabla^{g(t)}_XT\big)\,. 
$$
In particular, $D(t)_Xh=\varphi^*(\nabla^{g(t)}_Xg(t))=0$, so every connection of the family $D(t)$ is compatible with the bundle metric $h$ on $V$.\\
Let $D^2:\Gamma(TM)\times\Gamma(TM)\times\Gamma(V)\to\Gamma(V)$ be the second covariant derivative defined by
$$
D^2_{X,Y}(\zeta)=D_X(D_Y\zeta)-D_{\nabla^{g(t)}_XY}\zeta\,,\qquad\forall X,Y\in \Gamma(TM)\,,\forall \zeta\in \Gamma(V)\,,
$$
and the rough Laplacian defined by $\Delta_D=\tr_g(D^2)$.
Then, for every covariant tensor $T$ on $M$, there hold
\begin{eqnarray}
D^2_{X,Y}\big(\varphi^*(T)\big)&=&\varphi^*(\nabla^2_{X,Y}T)
\qquad\forall X,Y\in \Gamma(TM)\,,\\
\Delta_D\big(\varphi^*(T)\big)&=&\varphi^*(\Delta_gT)\,.
\end{eqnarray}
\end{lemma}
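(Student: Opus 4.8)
The plan is to prove Lemma~\ref{pullbackD} by reducing everything to the defining property of the pull--back connection, $D(t)_X\zeta=\varphi(t)^*(\nabla^{g(t)}_X(\varphi(t)(\zeta)))$, and then propagating this identity through the standard algebraic constructions (tensor products, traces, iterated derivatives). Throughout we suppress the time variable $t$, which plays no role beyond being fixed.

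\smallskip

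\emph{First step: the first--order identity.} For a $1$--form $\omega$ on $M$ and $\zeta\in\Gamma(V)$, the quantity $\omega(\varphi(\zeta))$ is a function on $M$, and $\varphi^*$ acts trivially on functions, so $\varphi^*(\nabla_X(\omega(\varphi(\zeta))))=X(\omega(\varphi(\zeta)))$. Expanding by the Leibniz rule on $M$ and using that $\varphi$ commutes with $\varphi^*$ pointwise, one gets $(\varphi^*\nabla_X\omega)(\zeta)+\omega(\varphi(D_X\zeta))=X(\varphi^*\omega(\zeta))$. But $\varphi^*\omega$ is a $1$--form on $V$ and the left side is exactly the Leibniz rule for $D_X$ acting on the pairing of $\varphi^*\omega$ with $\zeta$; comparing, one reads off $D_X(\varphi^*\omega)=\varphi^*(\nabla_X\omega)$. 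The same bookkeeping, applied to a decomposable covariant tensor $T=\omega_1\otimes\cdots\otimes\omega_k$ and using that both $D$ and $\nabla$ extend to tensor powers as derivations compatible with contractions (and $\varphi^*$ is an algebra homomorphism on the tensor algebra), gives $D_X(\varphi^*T)=\varphi^*(\nabla_X T)$ for all covariant $T$; the general case follows by multilinearity. Compatibility with $h$ is then the special case $T=g(t)$: indeed $\varphi^*(g(t))=h$ by construction, hence $D_X h=\varphi^*(\nabla^{g(t)}_X g(t))=\varphi^*(0)=0$.

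\smallskip

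\emph{Second step: the second--order identity and the Laplacian.} With the notation $D^2_{X,Y}\zeta=D_X(D_Y\zeta)-D_{\nabla^{g(t)}_X Y}\zeta$, note that the subtracted term uses the \emph{Levi--Civita} connection of $g(t)$ on the base, which is the same object appearing in $\nabla^2_{X,Y}=\nabla_X\nabla_Y-\nabla_{\nabla^{g(t)}_X Y}$ on $M$; this matching of base connections is what makes the identity clean. Applying the first--order identity twice, $D_X(D_Y(\varphi^*T))=D_X(\varphi^*(\nabla_Y T))=\varphi^*(\nabla_X\nabla_Y T)$, and $D_{\nabla^{g(t)}_X Y}(\varphi^*T)=\varphi^*(\nabla_{\nabla^{g(t)}_X Y}T)$; subtracting gives $D^2_{X,Y}(\varphi^*T)=\varphi^*(\nabla^2_{X,Y}T)$. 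Finally, the rough Laplacian is the metric trace: $\Delta_D=\tr_g(D^2)$ and $\Delta_g=\tr_g(\nabla^2)$. Since $\varphi(t):(TM,g_0)\to(TM,g(t))$ is a bundle isometry, it identifies the trace over $V$ with respect to $h=g_0$ with the trace over $TM$ with respect to $g(t)$; equivalently, $\varphi^*$ commutes with the metric contraction. Taking traces of the second--order identity therefore yields $\Delta_D(\varphi^*T)=\varphi^*(\Delta_g T)$.

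\smallskip

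\emph{Expected obstacle.} None of the steps is genuinely hard; the only thing requiring care is the bookkeeping in the first step — verifying that $\varphi^*$, which is defined fiberwise on $TM$ and its dual, is simultaneously an algebra homomorphism on the full tensor bundle and intertwines the two connections when restricted to $1$--forms — and then being disciplined about \emph{which} connection sits on the base in the definition of $D^2$ versus $\nabla^2$. Once the first--order identity is pinned down on $1$--forms, everything else is a formal consequence of the derivation and contraction properties, so the proof is essentially an unwinding of definitions; I would present it at exactly that level of detail, citing \cite[Chapter~6, Section~2]{chknopf} for the parts that are verbatim the Ricci flow computation.
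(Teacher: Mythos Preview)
Your proposal is correct, and since the paper does not supply its own proof but merely refers to \cite[Chapter~6, Section~2]{chknopf}, your argument is exactly the standard unwinding of definitions one expects there: establish the first--order identity by the Leibniz rule, then iterate and trace. One small expository point: in the Laplacian step, the trace $\tr_g$ in both $\Delta_D$ and $\Delta_g$ is taken over the \emph{same} $TM\otimes TM$ slots with respect to $g(t)$, and $\varphi^*$ acts only on the covariant $V^*$ slots, so the commutation is immediate and does not actually rely on the isometry property of $\varphi$; your invocation of the isometry there is harmless but unnecessary.
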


\begin{rem}\label{remriem} 
Let $\M{R}\in End(\Lambda^2M)$ be the \emph{Riemann curvature
  operator} defined by
\begin{equation}
\langle\M{R}(X\wedge Y),W\wedge Z\rangle=\Rm(X,Y,W,Z)\,,
\end{equation}
where $\langle\,,\,\rangle$ is the linear extension of $g$ to the
exterior powers of $TM$.\\ In the following, we use a convention on
the Lie algebra structure of $\Lambda^2M$ which is different from the
original one chosen by Hamilton in~\cite{hamilton2}.
More precisely, with his convention the
eigenvalues of the curvature operator are twice the sectional
curvatures, whereas with our convention the curvature operator has the
sectional curvatures as eigenvalues. In particular, every formula differs from the
corresponding one in the usual theory of the Ricci flow by a factor $2$ (see
also~\cite[Chapter~6, Section~3]{chknopf} for the
details). We recall that $\M{R}$ can be considered as an element of
$\Gamma(S^2(\Lambda^2M))$, and the following equations hold true
\begin{eqnarray*}
\RRR &=&2\sum_{i<k}\M{R}_{(ik)}^{(ik)}\,;\\
(\M{R}^2)_{ijkl}&=&\BBB_{ijkl}-\BBB_{ijlk}\,;\\
(\M{R}\#\M{R})_{ijkl}&=& \BBB_{ikjl}-\BBB_{iljk}\,.
\end{eqnarray*}
where $\BBB$ is defined as in Proposition~\ref{eqrmprop}. For more details on the structure of the curvature operator we refer again the reader to~\cite[Chapter~6, Section~3]{chknopf}.
\end{rem}

We now consider the pull--back of the Riemann curvature tensor and the curvature operator.

\begin{prop}\label{proppm}
Let $\Pm$ be the pull--back of the Riemann curvature tensor via the family of bundle isometries $\{\varphi(t)\}_{t\in[0,T)}$.
The following statements hold true:
\begin{enumerate}
\item $\Pm$ has the same symmetry properties as $\Rm$, i.e. it can be
  seen as an element of $\Gamma(S^2(\Lambda^2V))$ and it satisfies the first Bianchi identity;
\item For every $p\in M$ and $t\in[0,T)$ the {\em algebraic curvature
    operator} $\M{P}(p,t)\in End(\Lambda^2V_p)$ (see
  Remark~\ref{P_alg}), defined by $\varphi\circ\M{P}=\M{R}\circ\varphi$ 
    has the same eigenvalues as
  $\M{R}(p,t)$. In particular, $\M{P}$ is positive (nonnegative)
  definite if and only if $\M{R}$ is positive (nonnegative) definite;
\item $\Pic(t)=\tr_h\big(\Pm(t)\big)=\varphi(t)^*(\Ric_{g(t)})$;
\item $\PPP=\tr_h(\Pic(t))=\RRR_{g(t)}$;
\item $\BBB(\Pm)=\varphi^*\big(\BBB(\Rm)\big)$,
where $\BBB$ is defined in the same way as in Proposition~\ref{eqrmprop} for a generic element of $S^2(\Lambda^2V^*)$.
\end{enumerate}
\end{prop}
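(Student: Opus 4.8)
The plan is to reduce everything to the single structural fact behind the construction: for each $t\in[0,T)$ and each $p\in M$, the map $\varphi(t)\colon (T_pM,g_0)\to(T_pM,g(t))$ is a linear isometry, and by definition $\Pm(t)=\varphi(t)^*(\Rm_{g(t)})$, that is, $\Pm(t)(X,Y,W,Z)=\Rm_{g(t)}\bigl(\varphi(t)X,\varphi(t)Y,\varphi(t)W,\varphi(t)Z\bigr)$. The recurring principle I would use is that pull-back by a linear isometry commutes with every purely algebraic operation built from the metric --- raising and lowering indices, metric traces, tensor products, (anti)symmetrizations. Concretely: if $\{e_i\}_{i=1,\dots,n}$ is an $h$-orthonormal frame of $V$, so that $\{\varphi(t)e_i\}$ is a $g(t)$-orthonormal frame of $TM$, then any contraction of $\varphi(t)^*T$ with respect to $h$, computed in the frame $\{e_i\}$, equals the corresponding contraction of $T$ with respect to $g(t)$, computed in $\{\varphi(t)e_i\}$. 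This is the purely algebraic counterpart of Lemma~\ref{pullbackD}; granting it, items (1), (3), (4), (5) follow at once.

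For (1), the curvature symmetries $\Rm(X,Y,W,Z)=-\Rm(Y,X,W,Z)=-\Rm(X,Y,Z,W)=\Rm(W,Z,X,Y)$ and the first Bianchi identity are relations among the four arguments, hence they survive when each argument is replaced by its image under the fixed linear map $\varphi(t)$; thus $\Pm(t)\in\Gamma\bigl(S^2(\Lambda^2V)\bigr)$ and it satisfies the first Bianchi identity. For (3), in an $h$-orthonormal frame, $\tr_h(\Pm(t))(e_i,e_k)=\sum_j\Pm(t)(e_i,e_j,e_k,e_j)=\sum_j\Rm_{g(t)}(\varphi e_i,\varphi e_j,\varphi e_k,\varphi e_j)=\Ric_{g(t)}(\varphi e_i,\varphi e_k)=\varphi(t)^*(\Ric_{g(t)})(e_i,e_k)$. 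For (4), contracting once more, $\PPP(p,t)=\sum_i\Pic(t)(e_i,e_i)=\sum_i\Ric_{g(t)}(\varphi e_i,\varphi e_i)=\RRR_{g(t)}(p)$. For (5), the tensor $\BBB$ on $S^2(\Lambda^2V^*)$ is a metric-quadratic contraction of a curvature-type tensor with itself, of the same formal shape $\BBB(S)_{ijkl}=h^{pq}h^{rs}S_{ipjr}S_{kqls}$ as in Proposition~\ref{eqrmprop}; applying the naturality principle to the two $h$-contractions, together with $\Pm(t)=\varphi(t)^*(\Rm_{g(t)})$, gives $\BBB(\Pm(t))=\varphi(t)^*\bigl(\BBB(\Rm_{g(t)})\bigr)$.

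For (2), let $\Lambda^2\varphi(t)\colon\Lambda^2V\to\Lambda^2TM$ be the map induced by $\varphi(t)$ on bivectors; since $\varphi(t)$ is a linear isometry, so is $\Lambda^2\varphi(t)$ for the metrics induced on the exterior squares, and in particular its adjoint is its inverse. From the definitions, $\langle\M{P}(p,t)\alpha,\beta\rangle_h=\Pm(t)(\alpha,\beta)=\Rm_{g(t)}\bigl(\Lambda^2\varphi(t)\alpha,\Lambda^2\varphi(t)\beta\bigr)=\bigl\langle\M{R}(p,t)\,\Lambda^2\varphi(t)\alpha,\Lambda^2\varphi(t)\beta\bigr\rangle_{g(t)}$, and moving $\Lambda^2\varphi(t)$ to the other slot yields $\M{P}(p,t)=(\Lambda^2\varphi(t))^{-1}\circ\M{R}(p,t)\circ\Lambda^2\varphi(t)$, which is exactly the asserted identity $\varphi\circ\M{P}=\M{R}\circ\varphi$ once $\varphi$ is read as its induced action on $\Lambda^2$. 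Being conjugate by an isomorphism, $\M{P}(p,t)$ and $\M{R}(p,t)$ have the same characteristic polynomial, hence the same spectrum; being conjugate by an \emph{isometry}, $\M{P}(p,t)$ is $h$-self-adjoint and its quadratic form has the same sign as that of $\M{R}(p,t)$, so the two operators are simultaneously positive (resp. nonnegative) definite.

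There is no genuine analytic or geometric obstacle in this proposition: once one observes that $\varphi(t)$ is a \emph{fiberwise} isometry, every claim is a naturality statement. The points needing care are all bookkeeping ones: keeping track of the factor-$2$ convention on $\Lambda^2$ recalled in Remark~\ref{remriem} when translating between $\Rm$ and $\M{R}$ (and between $\BBB$ and $\M{R}^2$, $\M{R}\#\M{R}$), reading the shorthand $\varphi\circ\M{P}=\M{R}\circ\varphi$ in (2) as involving $\Lambda^2\varphi$ rather than $\varphi$ itself, and remembering in (5) that the ``generic'' $\BBB$ must be built with the bundle metric $h=g_0$ --- the contraction still being compatible with pull-back precisely because $\varphi(t)^*h=g_0$ holds for all $t$.
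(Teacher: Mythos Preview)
Your proof is correct and is precisely the natural argument; the paper itself states this proposition without proof, treating all five items as immediate consequences of the fact that $\varphi(t)\colon(V,h)\to(TM,g(t))$ is a fiberwise linear isometry. Your write-up therefore supplies exactly the details the paper omits, and there is nothing to compare against.

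One small slip in your last sentence: the relevant identity is $\varphi(t)^*(g(t))=h=g_0$, not ``$\varphi(t)^*h=g_0$'' (the metric $h$ lives on the domain $V$, not on the target). You stated the correct version at the outset, so this is only a typo in the closing remark.
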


Finally, we can compute the evolution of $\Pm$ and $\M{P}$.
\begin{prop}\label{uhlen_reward}
The tensors $\Pm$ and $\M{P}$ satisfy respectively the following evolution equations
\begin{eqnarray}\label{evolDU}
\Derpar{t}(\Pm)_{abcd}&=& \Delta_D(\Pm)_{abcd}-\rho(\varphi^*(\nabla^2\RRR)\varowedge h)_{abcd}\\
&& + 2\big(\BBB(\Pm)_{abcd}-\BBB(\Pm)_{abdc}+\BBB(\Pm)_{acbd}-\BBB(\Pm)_{adbc}\big)\nonumber\\
&&-2\rho\PPP\,\Pm_{abcd}\,,\nonumber\\
\dt\M{P}&=&\Delta_{D}\M{P}-2\rho\varphi^*(\nabla^2 \tr_h(\M{P}))\varowedge h+2\M{P}^2+2\M{P}^{\#}-4\rho \tr_h(\M{P})\M{P},\label{evol_P}
\end{eqnarray}
where $\tr_h(\M{P}(t))=\tr_{g(t)}(\M{R}(t))=\frac{1}{2}\RRR(t)$. 
\end{prop}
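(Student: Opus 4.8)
The plan is to differentiate the identity $\Pm(t)=\varphi(t)^*\big(\Rm_{g(t)}\big)$ in time and to exploit the fact that the ODE~\eqref{evol_isom} for $\varphi(t)$ has been tailored precisely so as to absorb the Ricci--Riemann reaction terms appearing in~\eqref{eqrm}. Fixing a time--independent local frame $\{e_a\}$ of the bundle $V=TM$ and writing $\Pm_{abcd}=\Rm(\varphi e_a,\varphi e_b,\varphi e_c,\varphi e_d)$, one obtains
\begin{equation*}
\dert\,\Pm_{abcd}=\varphi^*\big(\dert\Rm\big)_{abcd}+\Rm\big(\dert\varphi(e_a),\varphi e_b,\varphi e_c,\varphi e_d\big)+\big(\text{three analogous slot terms}\big).
\end{equation*}
For $\dert\Rm$ I would insert the ``pre--coordinate'' evolution equation obtained in the proof of Proposition~\ref{eqrmprop},
\begin{equation*}
\dert\Rm=\Delta\Rm-\rho\,\big(\nabla^2\RRR\varowedge g\big)+2\big(\BBB-\BBB^{(1)}+\BBB^{(2)}-\BBB^{(3)}\big)+\M{Q}+2\rho\RRR\,\Rm,
\end{equation*}
where $2(\BBB-\BBB^{(1)}+\BBB^{(2)}-\BBB^{(3)})$ abbreviates the quadratic $\BBB$--terms of~\eqref{eqrm} and $\M{Q}$ denotes the four Ricci contractions $-\Ric(\Rm(X,Y)W,Z)+\Ric(\Rm(X,Y)Z,W)-\Ric(\Rm(W,Z)X,Y)+\Ric(\Rm(W,Z)Y,X)$, i.e. in coordinates $-g^{pq}(\RRR_{pjkl}\RRR_{qi}+\RRR_{ipkl}\RRR_{qj}+\RRR_{ijpl}\RRR_{qk}+\RRR_{ijkp}\RRR_{ql})$.

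The heart of the argument, and the step I expect to require the only genuine computation, is the cancellation of $\varphi^*\M{Q}$ against the four correction terms. Substituting $\dert\varphi=\Ric_{g(t)}^{\#}\circ\varphi-\rho\RRR\,\varphi$, the endomorphism $\Ric^{\#}$ raises one index of the corresponding slot of $\Rm$, so that $\Rm\big(\Ric^{\#}(\varphi e_a),\varphi e_b,\varphi e_c,\varphi e_d\big)=\varphi^*\big(g^{pq}\RRR_{pjkl}\RRR_{qi}\big)_{abcd}$ and, using the pair symmetry and the antisymmetries of $\Rm$, the three analogous slot terms reproduce the other three contractions; summing, the four corrections equal $-\varphi^*\M{Q}$ and hence cancel it. The remaining part $-\rho\RRR\,\Rm(\varphi e_a,\dots)$ of each correction contributes $-4\rho\RRR\,\Pm_{abcd}$ in total, which combined with $\varphi^*(2\rho\RRR\,\Rm)=2\rho\PPP\,\Pm$ (Proposition~\ref{proppm}(4)) produces exactly the term $-2\rho\PPP\,\Pm_{abcd}$. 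For the surviving linear and quadratic terms I would apply Lemma~\ref{pullbackD} to identify $\varphi^*(\Delta_g\Rm)=\Delta_D\Pm$, observe that the pull--back commutes with the Kulkarni--Nomizu product and that $\varphi^*g=h$, whence $\varphi^*(\nabla^2\RRR\varowedge g)=\varphi^*(\nabla^2\RRR)\varowedge h$, and invoke Proposition~\ref{proppm}(5), $\varphi^*\BBB(\Rm)=\BBB(\Pm)$. Collecting all terms yields~\eqref{evolDU}.

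Equation~\eqref{evol_P} is then merely the operator reformulation of~\eqref{evolDU}: by the dictionary of Remark~\ref{remriem}, now applied to $\Pm$ and $\M{P}$, one has $2\big(\BBB(\Pm)_{abcd}-\BBB(\Pm)_{abdc}\big)=2(\M{P}^2)_{abcd}$ and $2\big(\BBB(\Pm)_{acbd}-\BBB(\Pm)_{adbc}\big)=2(\M{P}^{\#})_{abcd}$, while $\PPP=\RRR=2\tr_h(\M{P})$ turns $-2\rho\PPP\,\Pm$ into $-4\rho\,\tr_h(\M{P})\,\M{P}$ and $-\rho\,\varphi^*(\nabla^2\RRR)\varowedge h$ into $-2\rho\,\varphi^*\big(\nabla^2\tr_h(\M{P})\big)\varowedge h$; since $\Delta_D$ acts on $\Pm$ through the very identification $\M{P}\in\Gamma\big(S^2(\Lambda^2V)\big)$, the heat term is unchanged. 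The main obstacle, as indicated, is the slot--by--slot bookkeeping in the cancellation step, where one must repeatedly use the symmetries of $\Rm$ to match the four $\Ric^{\#}$--corrections against the four contractions hidden in $\dert\Rm$ with the correct signs; this is precisely the mechanism of the Uhlenbeck trick, and everything else reduces to the routine identifications already prepared in Lemma~\ref{pullbackD} and Proposition~\ref{proppm}.
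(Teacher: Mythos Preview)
Your proposal is correct and follows essentially the same route as the paper: differentiate $\Pm=\varphi^*\Rm$ via the product rule, substitute~\eqref{eqrm} and the ODE~\eqref{evol_isom}, observe the cancellation of the four Ricci--Riemann contractions against the $\Ric^{\#}$--parts of $\dert\varphi$, combine the $-4\rho\RRR\,\Pm$ coming from the $-\rho\RRR\varphi$ parts with the surviving $+2\rho\RRR\,\Pm$, and then invoke Lemma~\ref{pullbackD} and Proposition~\ref{proppm} to rewrite everything in terms of $D$, $h$ and $\Pm$. The passage to~\eqref{evol_P} via Remark~\ref{remriem} is likewise exactly what the paper does.
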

\begin{rem}
On the right hand side of equation~\eqref{evolDU} the term
$\varphi^*(\nabla^2 \RRR)$ there appears (i.e. the pull--back of the Hessian of the
scalar curvature, seen as a symmetric 2--form on the tangent bundle) 
and it cannot be expressed in terms of the connection $D(t)$.
\end{rem}
\begin{proof}
Let $\zeta_1,\dots,\zeta_4$ be sections of $V$; then combining the
evolution equations of the Riemann tensor~\eqref{eqrm} 
and of the bundle isometry $\varphi$~\eqref{evol_isom}, we obtain
\begin{eqnarray*}
&&\Derpar{t}(\Pm)(\zeta_1,\zeta_{2},\zeta_{3},\zeta_4)= \\
&& \varphi^*\Big(\Derpar{t}\Rm\Big)(\zeta_1,\zeta_{2},\zeta_{3},\zeta_4)+\Rm\Big(\derpar{\varphi}{t}(\zeta_1),\varphi(\zeta_2),\varphi(\zeta_3),\varphi(\zeta_4)\Big)\\
&& +\Rm\Big(\varphi(\zeta_1),\derpar{\varphi}{t}(\zeta_2),\varphi(\zeta_3),\varphi(\zeta_4)\Big)
+\Rm\Big(\varphi(\zeta_1),\varphi(\zeta_2),\derpar{\varphi}{t}(\zeta_3),\varphi(\zeta_4)\Big)\\
&&+\Rm\Big(\varphi(\zeta_1),\varphi(\zeta_2),\varphi(\zeta_3),\derpar{\varphi}{t}(\zeta_4)\Big)
\\
&=& \varphi^*(\Delta_g\Rm)(\zeta_1,\zeta_{2},\zeta_{3},\zeta_4)-\rho\varphi^*(\nabla^2\RRR\varowedge g)(\zeta_1,\zeta_{2},\zeta_{3},\zeta_4)\\
&& + 2\varphi^*\big(\BBB(\Rm)(\zeta_1,\zeta_{2},\zeta_{3},\zeta_4)-\BBB(\Rm)(\zeta_1,\zeta_{2},\zeta_{4},\zeta_3)-\BBB(\Rm)(\zeta_1,\zeta_{4},\zeta_{2},\zeta_3)\big.\\
&&\big.+\BBB(\Rm)(\zeta_1,\zeta_{3},\zeta_{2},\zeta_4) \big)+2\rho\RRR\varphi^*(\Rm)(\zeta_1,\zeta_{2},\zeta_{3},\zeta_4)\\
&& - \Rm\Big(\Ric^{\#}\circ\varphi(\zeta_1),\varphi(\zeta_2),\varphi(\zeta_3),\varphi(\zeta_4)\Big)- \Rm\Big(\varphi(\zeta_1),\Ric^{\#}\circ\varphi(\zeta_2),\varphi(\zeta_3),\varphi(\zeta_4)\Big)\\
&&- \Rm\Big(\varphi(\zeta_1),\varphi(\zeta_2),\Ric^{\#}\circ\varphi(\zeta_3),\varphi(\zeta_4)\Big)- \Rm\Big(\varphi(\zeta_1),\varphi(\zeta_2),\varphi(\zeta_3),\Ric^{\#}\circ\varphi(\zeta_4)\Big)\\
&& + \Rm\Big(\big(\Ric^{\#}\circ\varphi-\rho\RRR\varphi\big)(\zeta_1),\varphi(\zeta_2),\varphi(\zeta_3),\varphi(\zeta_4)\Big)\\
&&+\Rm\Big(\varphi(\zeta_1),\big(\Ric^{\#}\circ\varphi-\rho\RRR\varphi\big)(\zeta_2),\varphi(\zeta_3),\varphi(\zeta_4)\Big)\\
&&+\Rm\Big(\varphi(\zeta_1),\varphi(\zeta_2),\big(\Ric^{\#}\circ\varphi-\rho\RRR\varphi\big)(\zeta_3),\varphi(\zeta_4)\Big)\\
&&+\Rm\Big(\varphi(\zeta_1),\varphi(\zeta_2),\varphi(\zeta_3),\big(\Ric^{\#}\circ\varphi-\rho\RRR\varphi\big)(\zeta_4)\Big)\\
&=& \Delta_D(\Pm)(\zeta_1,\zeta_{2},\zeta_{3},\zeta_4)-\rho(\varphi^*(\nabla^2\RRR)\varowedge h)(\zeta_1,\zeta_{2},\zeta_{3},\zeta_4)\\
&& + 2\varphi^*\big(\BBB(\Pm)(\zeta_1,\zeta_{2},\zeta_{3},\zeta_4)-\BBB(\Pm)(\zeta_1,\zeta_{2},\zeta_{4},\zeta_3)-\BBB(\Pm)(\zeta_1,\zeta_{4},\zeta_{2},\zeta_3)\big.\\
&&\big.+\BBB(\Pm)(\zeta_1,\zeta_{3},\zeta_{2},\zeta_4) \big)-2\rho \PPP \Pm(\zeta_1,\zeta_{2},\zeta_{3},\zeta_4)\,,
\end{eqnarray*}
where we used several identities stated above.
For $\zeta_1,\dots,\zeta_4$ belonging to a local frame we get the desired equation~\eqref{evolDU}.\\
Combining the evolution equation for $\Pm$ with the formulas
stated in Remark~\ref{remriem}, we find the evolution equation of
$\M{P}$.
\end{proof}

\begin{rem}\label{P_alg}
It must be noticed that, even though for every $p\in M$ and $t\in[0,T)$, the tensor $\M{P}(p,t)$ belongs to the set of algebraic curvature operators $\M{C}_b(V_p)$, in general it does not coincide with the curvature operator of the pull--back connection $D(t)$. In the present literature the pull--back tensor is
always denoted by $\Rm$ and this abuse of notation is somehow
misleading, suggesting the wrong impression that
$\Pm(t)=\varphi(t)^*(\Rm_{g(t)})$ is equal to $\Rm_{\varphi(t)^*(g(t))}=\Rm_h$, but
this is not longer true for general isomorphisms of the tangent bundle (however it is true for $\varphi\in Diff(M)$).
\end{rem}

By the Uhlenbeck's trick the evolution equation~\eqref{evol_P} for
$\M{P}$ allows a simpler use of the maximum principle for tensor as
the reaction term is nicer and the metric on $S^2(\Lambda^2V)$ is
independent of time. Moreover, the subsets of $S^2(\Lambda^2V)$
preserved by such PDE correspond to curvature conditions preserved
under the RB flow.

\section{Preserved curvature conditions}

In this section we will use the maximum principle in various
formulations in order to find curvature conditions which are preserved
by the RB flow.

\subsection{The scalar curvature}\ \\ We begin by considering the evolution equation for 
the scalar curvature~\eqref{eqsc}, which behaves as under the Ricci flow.

\begin{prop}\label{Rcresce}
Let $(M, g(t))_{t\in[0,T)}$ be a compact maximal solution of the RB
flow~\eqref{eqflow}. If $\rho\leq\frac{1}{2(n-1)}$, the
minimum of the scalar curvature is nondecreasing along the flow. In
particular if $\RRR_{g(0)}\geq \alpha$, for some $\alpha \in \mathbb{R}$, then $\RRR_{g(t)}\geq \alpha$ for every $t\in[0,T)$. Moreover if $\alpha>0$ then
$T\leq\frac{n}{2(1-n\rho)\alpha}$.
\end{prop}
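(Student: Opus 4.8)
The plan is to apply the scalar maximum principle to the evolution equation~\eqref{eqsc} for $\RRR$, namely
\[
\dt\RRR = \big(1-2(n-1)\rho\big)\Delta\RRR + 2|\Ric|^2 - 2\rho\RRR^2\,.
\]
The key observation is that when $\rho\leq\tfrac{1}{2(n-1)}$, the coefficient $1-2(n-1)\rho$ is nonnegative, so this is (degenerately, in the borderline case) parabolic and the weak maximum principle for scalars on a compact manifold applies to the quantity $\RRR_{\min}(t)=\min_{M}\RRR_{g(t)}$. First I would bound the reaction term from below: by the Cauchy--Schwarz inequality $|\Ric|^2\geq\tfrac{1}{n}\RRR^2$, hence
\[
2|\Ric|^2 - 2\rho\RRR^2 \geq \tfrac{2}{n}\RRR^2 - 2\rho\RRR^2 = \tfrac{2}{n}(1-n\rho)\RRR^2 \geq 0\,,
\]
using that $\rho\leq\tfrac{1}{2(n-1)}<\tfrac{1}{n}$ for $n\geq 2$. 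Therefore at a spatial minimum point $p$ of $\RRR(\cdot,t)$ we have $\Delta\RRR(p,t)\geq 0$ and the reaction term is $\geq 0$, so in the barrier/support sense $\tfrac{d}{dt}\RRR_{\min}(t)\geq 0$; thus $\RRR_{\min}$ is nondecreasing. In particular, if $\RRR_{g(0)}\geq\alpha$ then $\RRR_{\min}(0)\geq\alpha$ and hence $\RRR_{g(t)}\geq\alpha$ for all $t\in[0,T)$. This is the standard ODE-comparison argument (cf.~\cite{chknopf, hamilton1}); the only point requiring care is the borderline case $\rho=\tfrac{1}{2(n-1)}$, where the equation is merely degenerate parabolic, but since the diffusion coefficient is a nonnegative constant and the reaction is nonnegative, the weak maximum principle still yields monotonicity of the minimum.

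For the finite-time bound when $\alpha>0$, I would upgrade the estimate by keeping the full strength of the reaction term. From the above,
\[
\dt\RRR \geq \big(1-2(n-1)\rho\big)\Delta\RRR + \tfrac{2}{n}(1-n\rho)\RRR^2\,,
\]
so $\phi(t):=\RRR_{\min}(t)$ satisfies, in the barrier sense, the ODE inequality $\phi'(t)\geq\tfrac{2}{n}(1-n\rho)\phi(t)^2$ with $\phi(0)\geq\alpha>0$ (and $\phi$ stays $\geq\alpha>0$ by the first part, so the comparison is with a genuinely positive, blowing-up solution). Comparing with the solution $\psi$ of $\psi'=\tfrac{2}{n}(1-n\rho)\psi^2$, $\psi(0)=\alpha$, which is
\[
\psi(t)=\frac{\alpha}{1-\frac{2(1-n\rho)\alpha}{n}t}\,,
\]
and which diverges as $t\to\frac{n}{2(1-n\rho)\alpha}$, we conclude $\phi(t)\geq\psi(t)$ as long as both exist; since $\phi$ must remain finite on $[0,T)$ (the metric is smooth there), we get $T\leq\frac{n}{2(1-n\rho)\alpha}$. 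Note $1-n\rho>0$ here for the same reason as above, so the denominator makes sense and the bound is a genuine finite time.

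The main (minor) obstacle is justifying the differential inequality for $\RRR_{\min}$ rigorously: since $\RRR_{\min}(t)$ is only Lipschitz in $t$, one should phrase this via Hamilton's trick — using the fact that for a smooth family the minimum is differentiable for a.e. $t$ with derivative equal to $\dt\RRR$ at the minimizing point, or equivalently via the forward-difference/liminf of difference quotients — together with the observation that at an interior spatial minimum the Hessian of $\RRR$ is nonnegative definite, hence $\Delta\RRR\geq 0$ there regardless of the sign of the (nonnegative) diffusion constant. Once this standard device is in place, everything reduces to the elementary ODE comparison above. There is nothing else genuinely hard; the borderline $\rho=\tfrac{1}{2(n-1)}$ case is fully covered because the diffusion coefficient never becomes negative and the argument only ever uses $\Delta\RRR\geq 0$ at a minimum.
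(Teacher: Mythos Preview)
Your proof is correct and follows essentially the same approach as the paper: the same evolution equation~\eqref{eqsc}, the same lower bound $|\Ric|^2\geq\tfrac{1}{n}\RRR^2$, the same maximum-principle monotonicity for $\RRR_{\min}$, and the same ODE comparison with $\psi'=\tfrac{2}{n}(1-n\rho)\psi^2$ (the paper likewise invokes Hamilton's trick to justify the differential inequality for $\RRR_{\min}$ almost everywhere). Your additional remarks on the borderline case $\rho=\tfrac{1}{2(n-1)}$ are fine but not essential to the argument.
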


\begin{proof}
As $\rho\leq\frac{1}{2(n-1)}\leq\frac{1}{n}$, for any
$n> 1$, it follows that
\begin{align*}
\dt\RRR=&\,\big(1-2(n-1)\rho\big)\Delta\RRR
+2|\Ric|^{2}-2\rho\RRR^{2}\\
\geq&\,\big(1-2(n-1)\rho\big)\Delta\RRR
+2\RRR^2/n-2\rho\RRR^{2}\\
\geq&\,\big(1-2(n-1)\rho\big)\Delta\RRR\,,
\end{align*}
hence, by the maximum principle, the minimum of the scalar curvature is
nondecreasing along the RB flow on a compact manifold. In particular,
for every $\alpha\in\R$, the condition $\RRR\geq\alpha$ is preserved.\\
Finally, integrating the inequality
\begin{equation*}
\dt\RRR_{min}\geq2\Bigl(\frac{1}{n}-\rho\Bigr)\RRR_{min}^2\,,
\end{equation*}
that holds almost everywhere for $t\in[0,T)$ (by the Hamilton's trick
(see~\cite{hamilton10},~\cite[Lemma~2.1.3]{Manlib})), we obtain
\begin{equation}\label{bound_sc}
\RRR_{min}(t)\geq\frac{n\alpha}{n-2(1-n\rho)\alpha t}\,,
\end{equation}
that, for $\alpha>0$, gives the estimate on the maximal time of existence.
\end{proof}

\begin{rem}
In the special case of the Schouten flow (when
$\rho=\frac{1}{2(n-1)}$), actually there holds
\begin{equation*}
\dt\RRR \geq \frac{n-2}{n(n-1)}\RRR^2\,,
\end{equation*}
at every point of the manifold, which implies that the scalar
curvature is pointwise nondecreasing and diverges in finite time.
\end{rem}

\begin{rem}
By means of the strong maximum principle, it follows that if the
initial manifold has nonnegative scalar curvature then either the manifold is Einstein ($\Ric=0$) or the scalar curvature becomes positive for every positive time under any RB flow with $\rho\leq \frac{1}{2(n-1)}$.
\end{rem}

\begin{prop}\label{ancient_posR}
Let $(M, g(t))_{t\in(-\infty,0]}$ be a compact, $n$--dimensional, ancient solution of the RB flow~\eqref{eqflow}. If $\rho\leq\frac{1}{2(n-1)}$ then, either $\RRR>0$ or $\Ric \equiv 0$ on $M \times (-\infty, 0]$.
\end{prop}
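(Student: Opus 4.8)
The plan is to combine the preserved condition $\RRR \ge 0$ from Proposition~\ref{Rcresce} with the strong maximum principle, exploiting the fact that the solution is ancient, i.e.\ defined for all $t \le 0$. First I would recall that, since $\rho \le \frac{1}{2(n-1)} \le \frac1n$, equation~\eqref{eqsc} gives
\begin{equation*}
\dt \RRR = \big(1-2(n-1)\rho\big)\Delta\RRR + 2|\Ric|^2 - 2\rho\RRR^2 \ge \big(1-2(n-1)\rho\big)\Delta\RRR + 2\Big(\frac1n - \rho\Big)\RRR^2 \ge c\,\Delta\RRR
\end{equation*}
with $c = 1-2(n-1)\rho > 0$, so $\RRR$ is a supersolution of a uniformly parabolic equation on the compact manifold $M$. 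Applying the strong maximum principle on an interval $[t_0, 0]$ for arbitrary $t_0 < 0$: if $\RRR$ vanishes at some interior spacetime point, then $\RRR \equiv 0$ on $M \times [t_0, 0]$; letting $t_0 \to -\infty$, either $\RRR > 0$ everywhere on $M \times (-\infty, 0]$, or $\RRR \equiv 0$ on all of $M \times (-\infty, 0]$.

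In the second case I would upgrade $\RRR \equiv 0$ to $\Ric \equiv 0$. Since $\RRR \equiv 0$, the evolution equation~\eqref{eqsc} forces the remaining reaction term to vanish identically: $2|\Ric|^2 - 2\rho\RRR^2 = 2|\Ric|^2 \le 0$ plus the Laplacian term. More carefully, from $\dt\RRR = 0$ and $\Delta\RRR = 0$ one gets $2|\Ric|^2 - 2\rho\RRR^2 = 0$, hence $|\Ric|^2 = 0$ pointwise, so $\Ric \equiv 0$ on $M \times (-\infty, 0]$. This is the clean algebraic step that makes the dichotomy sharp; note it does not even require distinguishing the sign of $\rho$, because the $\RRR^2$ term is killed by $\RRR \equiv 0$.

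The main obstacle is ensuring the strong maximum principle is applied correctly: the standard statement gives propagation of the zero set forward in time, so from a zero at time $t_1$ one concludes $\RRR \equiv 0$ for $t \ge t_1$ within the interval considered, but one must also argue that the minimum of $\RRR$ cannot have been positive at earlier times and then drop to zero — this is handled by the fact that $\RRR_{\min}(t)$ is nondecreasing (Proposition~\ref{Rcresce}), so $\RRR_{\min}(t) \le \RRR_{\min}(0)$ for all $t \le 0$, and if $\RRR_{\min}(t_1) = 0$ for some $t_1$ then $\RRR_{\min}(t) \le 0$, hence $= 0$ by preservation of $\RRR \ge 0$, for all $t \le t_1$ as well; combined with the strong maximum principle for $t \ge t_1$, one gets $\RRR \equiv 0$ on all of $M \times (-\infty, 0]$. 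So the argument is: either $\RRR_{\min}(t) > 0$ for all $t \le 0$ — which by Proposition~\ref{Rcresce}'s monotonicity is equivalent to $\RRR > 0$ everywhere — or $\RRR_{\min}(t_1) = 0$ for some $t_1 \le 0$, and then the two-sided propagation just described yields $\RRR \equiv 0$, hence $\Ric \equiv 0$, on $M \times (-\infty, 0]$.
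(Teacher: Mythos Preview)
Your argument has a genuine gap: you never establish that $\RRR \geq 0$ on $M \times (-\infty,0]$, yet your entire dichotomy hinges on it. You write ``either $\RRR_{\min}(t) > 0$ for all $t \le 0$ \dots\ or $\RRR_{\min}(t_1) = 0$ for some $t_1 \le 0$,'' but this omits the third possibility $\RRR_{\min}(t_1) < 0$. Proposition~\ref{Rcresce} only says that the minimum of $\RRR$ is nondecreasing forward in time and that a lower bound at an \emph{initial} time propagates forward; for an ancient solution there is no initial time, so this does not by itself rule out negative scalar curvature somewhere. Your appeal to the strong maximum principle (``if $\RRR$ vanishes at some interior spacetime point, then $\RRR \equiv 0$'') likewise presupposes $\RRR \geq 0$, since vanishing is only the interior minimum under that assumption. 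In short, the ancient hypothesis is never actually used in a substantive way.

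The paper's proof supplies exactly the missing step. It uses the quantitative lower bound~\eqref{bound_sc}, namely $\RRR_{\min}(t) \geq \frac{n\alpha}{n - 2(1-n\rho)\alpha t}$ with $\alpha = \RRR_{\min}(t_0)$, and exploits the ancient nature of the solution by sending $t_0 \to -\infty$: if $\RRR_{\min}(t_0) < 0$, then for any fixed $t_1$ one gets $\RRR_{\min}(t_1) > -\frac{n}{2(1-n\rho)(t_1-t_0)} \to 0$, hence $\RRR_{\min}(t_1) \geq 0$. Only after this does the strong maximum principle give the dichotomy $\RRR > 0$ versus $\RRR \equiv 0$, and then your second paragraph (plugging $\RRR \equiv 0$ back into~\eqref{eqsc} to force $|\Ric|^2 = 0$) is the correct way to finish. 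A minor additional point: you assert $c = 1 - 2(n-1)\rho > 0$, but the hypothesis allows equality, so the borderline Schouten case $\rho = \tfrac{1}{2(n-1)}$ needs a separate word.
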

\begin{proof}
As $g(t)$ is an ancient solution, for every $t_0<t_1\leq0$, we can define
$\widetilde{g}(s)=g(s+t_0)$, which is a solution of the RB flow
for $s\in[0, t_1-t_0]$. Then, we have
$\widetilde{\RRR}_{min}(0)=\RRR_{min}(t_0)$, hence,
from formula~\eqref{bound_sc}
\begin{equation*}
\widetilde{\RRR}_{min}(s)\geq \frac{n}{n\widetilde{\RRR}_{min}^{-1}(0)-2(1-n\rho)s}\,,
\end{equation*}
for every $s\in(0,t_1-t_0]$. In particular, we have
\begin{equation*}
\RRR_{min}(t_1)=\widetilde{\RRR}_{min}(t_1-t_0)\geq \frac{n}{n\RRR_{min}^{-1}(t_0)-2(1-n\rho)(t_1-t_0)}\,.
\end{equation*}
If $\RRR_{min}(t_0)\geq 0$, by Proposition~\ref{Rcresce}, it
follows that $\RRR_{min}(t_1)\geq 0$, so we can assume that
$\RRR_{min}(t_0)<0$, hence
\begin{equation*}
\RRR_{min}(t_1)\geq \frac{n}{n\RRR_{min}^{-1}(t_0)-2(1-n\rho)(t_1-t_0)}>-\frac{n}{2(1-n\rho)(t_1-t_0)}\,,
\end{equation*}
for every $t_1<t_0$, and sending $t_0$ to $-\infty$, we still conclude
that $\RRR_{min}(t_1)\geq 0$. Since this holds for every $t_1\leq
0$  the previous remark implies the result.
\end{proof}

\subsection{Maximum principle for uniformly elliptic operators}\ \\
Let $M$ be a smooth compact manifold, $g(t)$, $t\in[0,T)$, a family of Riemannian metrics on $M$ and $(E, h(t))$ $t\in[0,T)$, be a real vector bundle on $M$, endowed with a (possibly time--dependent) bundle metric. Let $D(t): \Gamma(TM)\times\Gamma(E)\to\Gamma(E)$ be a family of linear connections on $E$ compatible at each time with the bundle metric $h(t)$. We have already seen in Section~\ref{uhlenbeck} how to define the second covariant derivative, using also the Levi-Civita connections $\nabla_{g(t)}$ associated to the Riemannian metrics on $M$.
\begin{dfnz} We consider a second--order linear operator $\M{L}$ on $\Gamma(E)$ that lacks of its 0--th order term, hence it can be written in a local frame field $\{e_i\}_{i=1,\dots,n}$ of $TM$
\begin{equation}\label{op_ELL}
\M{L}=a^{ij}D^2_{e_ie_j}-b^iD_{e_i}
\end{equation}
where $a=a^{ij}e_i\otimes e_j \in\Gamma(S^2(TM))$ is a symmetric $(0,2)$--tensor and $b=b^ie_i$ is a smooth vector field. We say that $\M{L}$ is \emph{uniformly elliptic} if $a$ is uniformly positive definite. 
\end{dfnz}

\begin{rem}
In the previous definition, both the coefficients and the connections are in general time--dependent and we say that $\M{L}$ is uniformly elliptic if it is so for every $t\in[0,T)$ uniformly in time. 
\end{rem}

Weinberger in~\cite{weinberger} proved the maximum principle for systems of solutions of a time--dependent heat equation in the Euclidean space; Hamilton in~\cite{hamilton2} treated the general case of a vector bundle over an evolving Riemannian manifold. 
Here we present a slight generalization of Hamilton's theorem for parabolic equations with uniformly elliptic operator (see~\cite[Theorem~2.2]{savas-smoc} for the "static" version proved by Savas--Halilaj and Smoczyk).\\
As before, $(M,g(t))$ is a smooth compact manifold equipped with a family of Riemannian metrics; we consider a real vector bundle $E$ over $M$, equipped with a fixed bundle metric $h$ and a family of time--dependent connections $D(t)$ compatible at every time with $h$. 
\begin{dfnz}
Let $S\subset E$ be a sub-bundle and denote $S_p=S\cap E_p$ for every $p\in M$. We say that $S$ is \emph{invariant under parallel translation} w.r.t. $D$, if for every curve $\gamma:[0,l]\to M$ and vector $v\in S_{\gamma(0)}$, the unique parallel (w.r.t. $D$) section $v(s)\in E_{\gamma(s)}$ along $\gamma(s)$ with $v(0)=v$ is contained in $S$.
\end{dfnz}
\begin{teo}[Vectorial Maximum Principle]\label{syst_MP}
Let $u:[0,T)\to\Gamma(E)$ be a smooth solution of the following parabolic equation
\begin{equation}\label{vec_PDE}
\Derpar{t}u=\M{L}u+F(u,t)\,,
\end{equation}
where $\M{L}$ is a uniformly elliptic operator as defined in~\eqref{op_ELL} and $F:E\times[0,T)\to E$ is a continuous map, locally Lipschitz in the $E$ factor, which is also fiber--preserving, i.e. $F(v,t)\in E_p$ for every $p\in M$, $v\in E_p$, $t\in[0,T)$.\\
Let $K\subset E$ be a closed sub-bundle (for the metric $h$), invariant under parallel translation w.r.t. $D(t)$, for every $t\in[0,T)$, and convex in the fibers, i.e. $K_p=K\cap E_p$ is convex for every $p\in M$.\\
Suppose that $K$ is preserved by the ODE associated to~\eqref{vec_PDE}, i.e. for every $p\in M$ and $U_0\in K_p$, the solution $U(t)$  of
\begin{equation}
\left\{
\begin{array}{lll}
\frac{dU}{dt}&=&F_p(U(t), t)\,,\\
U(0)&=&U_0\,.
\end{array}
\right.
\end{equation}
remains in $K_p$. Then, if $u$ is contained in $K$ at time $0$,  $u$ remains in $K$, i.e. $u(p,t)\in K_p$ for every $p\in M$, $t\in[0,T)$.
\end{teo}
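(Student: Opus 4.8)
The plan is to prove the Vectorial Maximum Principle (Theorem~\ref{syst_MP}) by adapting Hamilton's original argument for the Ricci flow (as in~\cite{hamilton2} or~\cite[Chapter~6]{chknopf}), with the only modification being that the Laplacian is replaced by the more general uniformly elliptic operator $\M{L}$. First I would reduce to a local problem: since $M$ is compact and $K$ is closed and fiberwise convex, it suffices to show that the solution $u$ cannot touch $\partial K$ "for the first time" in a way that pushes it out. The standard device is to introduce a distance-like function. Fix a smooth function $s:E\to\RR$ which is fiberwise convex, vanishes exactly on $K$, is positive outside $K$, and whose fiberwise gradient on $\partial K$ is an outward normal to $K_p$; such a function exists because $K_p$ is convex and varies continuously. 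Then one studies $\phi(t)=\sup_{p\in M} s(u(p,t))$ and aims to show $\phi(t)\le 0$ for all $t$, given $\phi(0)\le 0$.

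The key computation is the differential inequality for $\phi$. At a point $(p_0,t_0)$ where the supremum is attained, one computes $\Derpar{t}s(u)$ using the PDE~\eqref{vec_PDE}: the term coming from $F$ is controlled by the ODE-invariance hypothesis on $K$ (via a first-order Taylor expansion of $s$, using that $\nabla s$ is an outer normal and $F$ points "inward" in the sense forced by ODE invariance, together with a Gronwall/perturbation argument handling the locally Lipschitz dependence); the term coming from $\M{L}u$ must be shown to be $\le 0$. Here is where uniform ellipticity enters. Writing $\M{L}u=a^{ij}D^2_{e_ie_j}u-b^iD_{e_i}u$, at a spatial maximum of $s(u(\cdot,t_0))$ one has $D_{e_i}\big(s(u)\big)=\langle\nabla s, D_{e_i}u\rangle=0$ for all $i$, which kills the first-order term $b^iD_{e_i}u$ in the relevant sense, and the Hessian of $s(u(\cdot,t_0))$ in space is nonpositive; combined with the fiberwise convexity of $s$ and the invariance of $K$ under parallel translation (which is exactly what guarantees that the "frozen coefficient" comparison is legitimate: parallel transport into a fixed fiber keeps things inside $K$), the contraction of the nonpositive spatial Hessian of $s\circ u$ with the positive-definite tensor $a^{ij}$ is $\le 0$. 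This is the standard two-part splitting $D^2(s\circ u)=(\nabla^2 s)(Du,Du)+\langle\nabla s, D^2 u\rangle$ applied in the $a^{ij}$-contracted form; positive-definiteness of $a$ is precisely what makes the trace argument go through just as it does for $\Delta=g^{ij}D^2_{e_ie_j}$.

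The main obstacle, and the only place real care is needed, is the non-smoothness of $\phi(t)=\sup_p s(u(p,t))$ and of the support function $s$ on $\partial K$: one cannot naively differentiate $\phi$. The remedy is by now routine — use Hamilton's trick (see~\cite{hamilton10},~\cite[Lemma~2.1.3]{Manlib}) to get that $\phi$ is locally Lipschitz with $\Derpar{t}\phi(t_0)\le \Derpar{t}s(u(p_0,t_0))$ at a.e.\ $t_0$ at a maximizing point $p_0$, and approximate $s$ by smooth fiberwise-convex functions or work with generalized gradients. One then feeds the differential inequality $\Derpar{t}\phi\le C\phi$ (the constant $C$ coming from the Lipschitz constant of $F$ on a compact neighborhood and from the lower-order structure) into Gronwall to conclude $\phi(t)\le e^{Ct}\phi(0)\le 0$.

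I should remark that the construction of the function $s$ with the required regularity near $\partial K$, and the verification that ODE-invariance of $K$ translates into the inequality $\langle\nabla s(v), F_p(v,t)\rangle\le C\,s(v)$ for $v$ near $\partial K_p$, are the technical heart; these are carried out in detail in~\cite[Chapter~6, Section~2]{chknopf} for the Laplacian case, and our only change is replacing $g^{ij}$ by a uniformly positive-definite $a^{ij}$, which affects none of those arguments. The "static" version in~\cite[Theorem~2.2]{savas-smoc} already contains the uniformly elliptic modification; here we simply add the time dependence of $g(t)$, $D(t)$, $a^{ij}$, $b^i$ and $F$, which is harmless because all estimates are required to hold uniformly in $t\in[0,T)$ and $[0,T)$ can be exhausted by compact subintervals.
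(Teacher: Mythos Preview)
Your proposal is correct and follows essentially the same strategy as the paper's sketch: both reduce to the observation that uniform ellipticity forces the second-order part $\M{L}u$ to point into $K$ at any contact point with $\partial K$ --- the paper isolates this as the tangent-cone lemma $(\M{L}u)_p\in C_{u(p)}K_p$ (citing \cite[Chapter~10, Section~3]{chowbookII} and \cite[Lemma~2.2]{savas-smoc}), while you express the same fact through the convex barrier $s$ and Hamilton's trick --- after which ODE-invariance of $K$ handles the reaction term. The two formulations are standard and equivalent; the only cosmetic difference is that the support-function/tangent-cone packaging sidesteps the regularity issues for $s$ near $\partial K$ that you (correctly) flag as the delicate technical point.
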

\begin{proof}(Sketch)
We can follow exactly the detailed proof written in~\cite[Chapter~10,
Section~3]{chowbookII}, provided that we generalize~\cite[Lemma 10.34]{chowbookII} to
the analogue one for uniformly elliptic operator (see again~\cite[Lemma~2.2]{savas-smoc}): if $K\subset E$ satisfies all the
hypothesis of Theorem~\ref{syst_MP} and $u\in\Gamma(E)$ is a smooth section of
$E$, then
\begin{equation*}
  u(p)\in K_p \quad \forall p\in M \quad \Longrightarrow \quad\M{L}(u)_p\in C_{u(p)}K_p \quad \forall p\in M \,,
\end{equation*}
where $C_{u(p)}K_p$ is the tangent cone of the convex set $K_p$ at $u(p)$.
\end{proof}
There is a further generalization of this maximum principle which allows the subset $K$ to be time--dependent.
\begin{teo}[Vectorial Maximum Principle, Time--dependent Set]\label{time_MP}
Let $u:[0,T)\to\Gamma(E)$ be a smooth solution of the parabolic equation~\eqref{vec_PDE}, with the notations of the previous Theorem. For every $t\in[0,T)$, let $K(t)\subset E$ be a closed sub-bundle (for the metric $h$), invariant under parallel translation w.r.t. $D(t)$, convex in the fibers and such that the space--time track
\begin{equation*}
\M{T}=\{(v,t)\in E\times\R: v\in K(t), t\in[0,T)\}
\end{equation*}
is closed in $E\times [0,T)$. Suppose that, for every $t_0\in[0,T)$, $K(t_0)$ is preserved by the ODE associated, i.e. for any $p\in M$, any solution $U(t)$ of the ODE that starts in $K(t_0)_p$ remains in $K(t)_p$, as long as it exists. Then, if $u(0)$ is contained in $K(0)$, $u(p,t)\in K(t)_p$ for ever $p\in M$, $t\in[0,T)$.
\end{teo}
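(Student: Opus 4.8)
The plan is to mimic the proof of Theorem~\ref{syst_MP}, carrying the time--dependence along; the only genuinely new ingredient is the control of the distance from $u(p,t)$ to the \emph{moving} fibre $K(t)_p$. The crucial observation is that the hypothesis already encodes the motion of the set: if $U(t)$ solves $\tfrac{dU}{dt}=F_p(U,t)$ with $U(t_0)\in K(t_0)_p$, then $U(t)\in K(t)_p$ for all $t\geq t_0$, so the solutions of the ODE provide moving barriers against which to compare $u$.

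First I would reduce to a compact time interval: it suffices to prove the conclusion on $[0,T']$ for every $T'<T$. On such an interval $u$ takes values in a compact subset of $E$, and since $F$ is continuous and locally Lipschitz in the fibre it is Lipschitz in the fibre, uniformly in $(p,t)$, on a compact neighbourhood of this set together with the portion of the track $\M{T}$ over $[0,T']$; call $L$ the resulting Lipschitz constant. Set $\psi(p,t):=d_h\big(u(p,t),K(t)_p\big)$, the fibrewise distance to the closed convex set $K(t)_p$, and $\sigma(t):=\max_{p\in M}\psi(p,t)$; using that the space--time track $\M{T}$ is closed and that $M$ is compact, $\sigma$ is lower semicontinuous, $\sigma(0)=0$, and the goal is $\sigma\equiv0$ on $[0,T']$.

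Now fix $t_0$ with $\sigma(t_0)>0$, choose $p_0$ realising the maximum, put $v_0=u(p_0,t_0)$, let $w_0\in K(t_0)_{p_0}$ be the nearest point of $K(t_0)_{p_0}$ to $v_0$, and set $\xi=v_0-w_0$, so $|\xi|_h=\sigma(t_0)$ and $\xi$ is an outward normal direction to $K(t_0)_{p_0}$ at $w_0$. Let $U(t)$ solve the ODE on $E_{p_0}$ with $U(t_0)=w_0$; by hypothesis $U(t)\in K(t)_{p_0}$ for $t\geq t_0$, hence $\psi(p_0,t)\leq|u(p_0,t)-U(t)|_h$ with equality at $t_0$. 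Differentiating $t\mapsto|u(p_0,t)-U(t)|_h^2$ at $t_0$, using $\dt u=\M{L}u+F(u,t)$ and $U'=F_{p_0}(U,t)$, one gets
\begin{equation*}
\tfrac{d}{dt}\Big|_{t_0}|u(p_0,t)-U(t)|_h^2=2\big\langle\xi,\M{L}u(p_0,t_0)\big\rangle_h+2\big\langle\xi,F_{p_0}(v_0,t_0)-F_{p_0}(w_0,t_0)\big\rangle_h\,.
\end{equation*}
The second term is at most $2L|\xi|_h^2=2L\,\sigma(t_0)^2$, and the first term is $\leq0$ by the argument described below. Hence the upper right Dini derivative of $t\mapsto\psi(p_0,t)^2$ at $t_0$ is at most $2L\,\sigma(t_0)^2$, and following the comparison argument of~\cite[Chapter~10, Section~3]{chowbookII} (the analogue of ``Hamilton's trick'',~\cite{hamilton10},~\cite[Lemma~2.1.3]{Manlib}) the same inequality passes to $\sigma^2$; since $\sigma(0)=0$ this forces $\sigma\equiv0$ on $[0,T']$, and letting $T'\uparrow T$ completes the proof.

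The main obstacle is the inequality $\langle\xi,\M{L}u(p_0,t_0)\rangle_h\leq0$ at a spatial maximum of the distance to the moving set. This is the moving--set analogue of the lemma underlying Theorem~\ref{syst_MP} --- the uniformly elliptic generalization (cf.~\cite[Lemma~2.2]{savas-smoc}) of~\cite[Lemma~10.34]{chowbookII} --- and is where essentially all the work lies: one must differentiate, in the base variable, the squared distance to $K(t_0)_p$, which a priori depends on $p$ both through the section $u$ and through the bundle structure. Here the invariance of $K(t_0)$ under $D(t_0)$--parallel translation is essential, since it lets one trivialise $K(t_0)_p$ along a $D(t_0)$--parallel frame near $p_0$, reducing the computation to the fibrewise picture: the first--order maximum condition then annihilates the gradient term, while uniform ellipticity of $a$, compatibility of $D(t_0)$ with $h$, and convexity of $K(t_0)_{p_0}$ (so that $\tfrac12 d_h(\cdot,K(t_0)_{p_0})^2$ is convex) control the Hessian term with the right sign. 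The remaining points --- lower semicontinuity of $\sigma$, the reduction of the Dini derivative of the spatial maximum to the Dini derivatives at maximising points, and the identification of the first time at which $\sigma$ could become positive --- are routine, and are precisely where closedness of $\M{T}$ and compactness of $M$ enter.
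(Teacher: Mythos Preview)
Your sketch is correct and, in fact, more detailed than what the paper provides: the paper does not prove this theorem at all but simply refers the reader to B\"ohm--Wilking~\cite[Th.~1.1]{bohmwilk2} for the case where $K$ depends continuously on time and $F$ is autonomous, and to~\cite[Chapter~10, Section~5]{chowbookII} for the general case, remarking only that the usual adaptation to uniformly elliptic (rather than Laplacian) principal part is needed. Your argument --- comparing $u$ at a spatial maximiser with the ODE trajectory started from the nearest point of $K(t_0)_{p_0}$, using the Lipschitz bound on $F$ to control the reaction difference, and using parallel invariance plus fibrewise convexity to get the sign on $\langle\xi,\M{L}u\rangle_h$ --- is precisely the strategy carried out in those references, so there is nothing to contrast.

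One small comment on presentation: the passage from the Dini inequality for $\psi(p_0,\cdot)^2$ to the same inequality for $\sigma^2$ is a little more delicate than ``Hamilton's trick'' as usually stated, because $\psi$ need not be $C^1$ in $t$ (only the auxiliary barrier $|u(p_0,t)-U(t)|_h$ is). In~\cite[Chapter~10]{chowbookII} this is handled by an $\varepsilon$--enlargement argument (replace $K(t)$ by its $\varepsilon e^{Ct}$--neighbourhood and argue by contradiction at the first exit time), which sidesteps the semicontinuity issues you flag at the end; you may find that cleaner than the Dini--derivative route, though both lead to the same place.
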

The proof of this theorem, when $K$ depends continuously on time and $F$ does not depend on time is due to Bohm and Wilking (see~\cite[Th. 1.1]{bohmwilk2}). In the general case the proof can be found in~\cite[Chapter 10, Section 5]{chowbookII}, with the usual adaptation to the uniformly elliptic case.

As remarked before, the evolution equation~\eqref{eqrm} of the Riemann tensor has some mixed product of type $\Rm*\Ric$ which makes difficult to understand the behavior of the reaction term. On the other hand, if we perform the Uhlenbeck's trick, the evolution equation~\eqref{evolDU} becomes a little nicer and can be used to understand how the RB flow affects the geometry.\\
More precisely, we use the evolution equation~\eqref{evol_P} for the algebraic curvature operator $\M{P}\in\Gamma(S^2(\Lambda^2V^*))$ to prove that the cone of nonnegative curvature operators is preserved by the RB flow.
\begin{prop}\label{op_nonneg}
Let $(M, g(t))_{t\in[0,T)}$ be a compact solution of the RB flow~\eqref{eqflow} with $\rho<\frac{1}{2(n-1)}$ and such that the initial data $g_0$ has nonnegative curvature operator. Then $\M{R}_{g(t)}\geq 0$ for every $t\in[0,T)$.
\end{prop}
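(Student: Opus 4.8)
The plan is to imitate Hamilton's argument for the Ricci flow, using the Uhlenbeck's trick so that the relevant PDE is the evolution equation~\eqref{evol_P} for the algebraic curvature operator $\M{P}\in\Gamma(S^2(\Lambda^2 V))$ on the \emph{fixed} bundle, and then to apply the vectorial maximum principle (Theorems~\ref{syst_MP} and~\ref{time_MP}). First I would take $K\subset S^2(\Lambda^2 V)$ to be the cone of nonnegative algebraic curvature operators: it is closed, convex in each fibre, and invariant under parallel translation with respect to every $D(t)$, the last property because each $D(t)$ is compatible with the fixed bundle metric $h$ and parallel transport therefore preserves nonnegative definiteness. By Proposition~\ref{proppm}(2) the hypothesis $\M{R}_{g_0}\ge 0$ gives $\M{P}(0)\ge 0$, and, once the hypotheses of the maximum principle are verified for~\eqref{evol_P}, the conclusion $\M{P}(t)\ge 0$, hence $\M{R}_{g(t)}\ge 0$ again by Proposition~\ref{proppm}(2), follows.

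The algebraic input is that the reaction ODE $\tfrac{d}{dt}U = 2U^{2}+2U^{\#}-4\rho\,\tr_h(U)\,U$ associated to~\eqref{evol_P} preserves $K$. At a boundary point $U\in\partial K$ and a null vector $v\in\ker U\subset\Lambda^2$ one has $\langle U^{2}v,v\rangle=|Uv|^2=0$ and $\langle Uv,v\rangle=0$, so the pairing of the reaction against $v$ collapses to $2\langle U^{\#}v,v\rangle$, which is nonnegative by the standard lemma on the Lie--algebra square (see~\cite[Chapter~6]{chknopf}); hence the reaction vector field is sub-tangent to $\partial K$ and $K$ is ODE-invariant. Note that $-4\rho\,\tr_h(U)U$ is immaterial at boundary points, and since $\tr_h(\M{P})=\tfrac12\RRR\ge 0$ is preserved (Proposition~\ref{Rcresce}, using that $\M{R}_{g_0}\ge 0$ forces $\RRR_{g_0}\ge 0$) it is in fact favourable when $\rho\le 0$.

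The real obstacle is the Hessian term $-\rho\,\varphi^*(\nabla^2\RRR)\varowedge h$: it is of second order but is \emph{not} a multiple of $\Delta_D\M{P}$, so the operator on the right--hand side of~\eqref{evol_P} is not of the pure form~\eqref{op_ELL} to which Theorem~\ref{syst_MP} directly applies. The first thing to establish is that, mirroring the symbol computation of Section~\ref{short}, this operator is nevertheless strictly parabolic \emph{as a system} precisely when $\rho<\tfrac1{2(n-1)}$: its principal symbol is $U\mapsto -|\xi|^2 U + 2\rho\,\tr_h(U)\,(\xi\otimes\xi)\varowedge h$, whose only eigenvalue different from $-|\xi|^2$ is $(2(n-1)\rho-1)|\xi|^2$, attained on the line $\mathbb{R}\,(\xi\otimes\xi)\varowedge h$, and negative precisely under our hypothesis (it degenerates at the Schouten value, matching the discussion in the Introduction). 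Crucially the exceptional eigendirection only feels the $\Lambda^2$--trace of $U$, i.e.\ the scalar curvature. The plan is therefore to peel it off: writing $\nabla^2\RRR\varowedge h=(\nabla^2\RRR)^{\circ}\varowedge h+\tfrac{2\Delta\RRR}{n}\,\mathrm{Id}_{\Lambda^2 V}$ (using $h\varowedge h=2\,\mathrm{Id}_{\Lambda^2 V}$) and using~\eqref{eqsc} to trade $\Delta\RRR$ for $\partial_t\RRR$ plus curvature terms, one passes to $\M{Q}:=\M{P}-\tfrac{1}{n(n-1)}\RRR\,\mathrm{Id}_{\Lambda^2 V}$ (which is $\Lambda^2$--trace free); with this choice of constant the identity--direction second-order contribution cancels, and one is left with $\Delta_D\M{Q}$ plus the trace-free term $-\rho\,\varphi^*((\nabla^2\RRR)^{\circ})\varowedge h$, which involves no derivatives of $\M{Q}$ (only of the separately controlled scalar curvature), together with the equivalent time-dependent constraint $\M{Q}\ge-\tfrac{1}{n(n-1)}\RRR\,\mathrm{Id}_{\Lambda^2 V}$, whose space--time track is closed since $\RRR\ge 0$; to this one applies Theorem~\ref{time_MP} in its uniformly parabolic version.

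The step I expect to be the crux --- and the one that genuinely uses the strict inequality $\rho<\tfrac1{2(n-1)}$ rather than just $\rho\le\tfrac1{2(n-1)}$ --- is the verification that the residual second-order term does not spoil the tangent-cone condition at a first touching point: at a point $p_0$ and a null $2$--vector $v_0=X\wedge Y$ of $\M{P}(p_0)$ one must dominate $-\rho\big(\nabla^2\RRR(X,X)+\nabla^2\RRR(Y,Y)\big)$ by the nonnegative contributions coming from $\Delta_D\M{P}(p_0)$ and from $\M{P}\ge 0$ holding at all nearby points; the strict parabolicity established above is exactly what makes this estimate close, and its breakdown at $\rho=\tfrac1{2(n-1)}$ is the analytic counterpart of the extra zero symbol eigenvalue noted for the Schouten flow.
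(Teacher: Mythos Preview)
Your overall strategy and the algebraic verifications are exactly those of the paper: the cone $K$ of nonnegative operators is closed, fibrewise convex, and parallel--invariant; the reaction $F(Q)=2(Q^2+Q^{\#}-2\rho\,\tr_h(Q)\,Q)$ is tangent to $K$ at boundary points via the null--vector test; and the principal symbol of the full second--order operator has eigenvalues $1$ and $1-2(n-1)\rho$, all positive precisely when $\rho<\tfrac{1}{2(n-1)}$. Where you diverge from the paper is in what you do with that symbol computation.

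The paper does \emph{not} peel off the Hessian term or pass to a trace--free modification $\M{Q}=\M{P}-\tfrac{1}{n(n-1)}\RRR\,\mathrm{Id}$. It simply declares the whole second--order part
\[
\M{L}(Q)=\Delta_D Q-2\rho\,\varphi^*(\nabla^2\tr_h(Q))\varowedge h
\]
to be the uniformly elliptic operator in Theorem~\ref{syst_MP}, and the symbol calculation you already carried out is taken as the verification of uniform ellipticity. The evolution then reads $\partial_t\M{P}=\M{L}\M{P}+F(\M{P})$ and Theorem~\ref{syst_MP} applies directly. Your worry that $\M{L}$ is not literally of the scalar--coefficient form~\eqref{op_ELL} is formally correct, but the version of the vectorial maximum principle stated in the paper (following Savas--Halilaj and Smoczyk, cited in the sketch of proof of Theorem~\ref{syst_MP}) is meant for this broader class: positivity of the symbol is what drives the tangent--cone lemma at a first touching point.

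Your proposed route --- separating out $\RRR$, carrying a time--dependent lower barrier, and then facing a residual $-\rho(\nabla^2\RRR(X,X)+\nabla^2\RRR(Y,Y))$ at a touching point --- is not wrong in spirit, but it reintroduces by hand exactly the difficulty that the uniform ellipticity of $\M{L}$ absorbs. The ``crux'' you anticipate is precisely what the strict positivity of the full symbol encodes; handling it separately would amount to re--deriving that estimate in coordinates. In short, you already have the decisive computation: the missing step is to recognise it as the ellipticity hypothesis and invoke Theorem~\ref{syst_MP} on the undecomposed equation, rather than to work around it.
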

\begin{proof}
We recall the evolution equation~\eqref{evol_P} for $\M{P}=\varphi^{-1}\circ\M{R}\circ\varphi$
\begin{equation*}
\dt\M{P}=\Delta_{D}\M{P}-2\rho\varphi^*(\nabla^2 \tr_h(\M{P}))\varowedge h+2\M{P}^2+2\M{P}^{\#}-4\rho \tr_h(\M{P})\M{P},
\end{equation*}
where $\tr_h(\M{P}(t))=1/2\RRR(t)$ is half of the scalar curvature of the metric $g(t)$. By proposition~\ref{proppm}, it suffices to show that the non negativity of $\M{P}$ is preserved by equation~\eqref{evol_P}. We want to apply the vectorial maximum principle~\ref{syst_MP}, therefore we must show that 
\begin{equation*}
\M{L}(Q)=\Delta_D Q-2\rho\varphi^*(\nabla^2 \tr_h(Q))\varowedge h
\end{equation*}
is a uniformly elliptic operator on the bundle $(\Gamma(S^2(\Lambda^2V^*)),h ,D(t))$. \\
As $\M{L}$ is a linear second order operator, we compute as usual its principal symbol in the arbitrary direction $\xi$. In order to simplify the computations, we choose opportune frames at every point $p\in M$ and time $t\in[0,T)$. Then, let $\{e_i\}_{i=1,\dots, n}$ be an orthonormal basis of $(V_p,h_p)$ such that $\xi=h_p(e_1,\cdot)$. According to the Uhlenbeck's trick (Section~\ref{uhlenbeck}) and the convention on algebraic curvature operators (Section~\ref{uhlenbeck}) we have that $\{f_i=\varphi(t)_p(e_i)\}_{i_1,\dots,n}$ is an orthonormal basis of $T_pM$ with respect to $g(t)_p$, the components of $\varphi(t)_p$ with these choices are $\varphi_i^a=\delta_i^a$ and $\{e_i\wedge e_j\}_{i<j}$ is an orthonormal basis of $\Lambda^2V_p$. Hence, the principal symbol of the operator $\M{L}$ written in these frames is
\begin{eqnarray*}
\sigma_{\xi}(\M{L}Q)_{(ij)(kl)}&=& \xi^p\xi_pQ_{(ij)(kl)}-2\rho\delta_i^a\delta_j^b\delta_k^c\delta_l^d\tr_h(Q)(\xi\otimes\xi\varowedge h)_{(ab)(cd)}\\
&=&|\xi|^2Q_{(ij)(kl)}-2\rho\tr_h(Q)(\xi\otimes\xi\varowedge h)_{(ij)(kl)}\\
&=&Q_{(ij)(kl)}-2\rho\Big(\sum_{p<q}Q_{(pq)(pq)}\Big)\delta_i^1\delta_k^1\delta_{jl}\,,
\end{eqnarray*}
where we used that $|\xi|=1$, $i<j$ and $k<l$ in the last passage. Now it is easy to see that the matrix representing the symbol has the following form
\begin{equation*}
\sigma_{\xi}(\M{L})=
\left(\begin{array}{c|c|c} 
A[n-1]&\begin{array}{ccc}
-2\rho  &\dots & 2\rho  \\
\vdots &  \ddots & \vdots  \\
-2\rho & \dots & -2\rho  
\end{array} & \phantom{aaa}0\phantom{aaa} \\
  \hline
\phantom{aaa} & \phantom{aaa} & \phantom{aaa} \\
0 & {\text{\Large{${\mathrm {Id}_{(n-1)(n-2)/2}\,\,\,\,}$}}} & 0\\
\phantom{aaa} & \phantom{aaa} & \phantom{aaa} \\
\hline
\phantom{aaa} & \phantom{aaa} & \phantom{aaa} \\
0 & 0 & {\text{\Large{${\mathrm {Id}_{N(N-1)/2}\,\,\,\,}$}}}\\
\phantom{aaa} & \phantom{aaa} & \phantom{aaa}
\end{array}\right)\,,
\end{equation*}
where we have ordered the components as follows: first the $n-1$ ones
of the form $(1j)(1j)$ with $j>1$, then the $(n-1)(n-2)/2$ ones of the
form $(ij)(ij)$ with $1<i<j$, and last the $N(N-1)/2$ "non diagonal"
ones, with $N=n(n-1)/2$ and $A$ is the matrix defined
in~\eqref{matrix_A}.\\
By lemma~\ref{det_A} the eigenvalues of the symbol are $1$ with
multiplicity $N(N+1)/2-1$ and $1-2(n-1)\rho$ with multiplicity $1$, since $\rho<1/2(n-1)$ the operator $\M{L}$ is uniformly elliptic.\\
In the second part of the proof we consider the reaction term
$F(Q)=2(Q^2+Q^{\#}-2\rho\tr_h(Q)Q)$. Clearly $F$ is continuous,
locally Lipschitz and fiber--preserving. Let
$\Omega\subset\Gamma(S^2(\Lambda^2V^*))$ be the set of nonnegative
algebraic curvature operators, where we have identified
$S^2(\Lambda^2V^*)\simeq End_{SA}(\Lambda^2V)$ via the metric $h$.
We observe that $\Omega=\{Q:\quad \lambda_N(Q_p)\geq 0\}$, where
$N=n(n-1)/2$ and $\lambda_N$ is the least eigenvalue of $Q_p$. Hence
$\Omega$ is clearly closed, by~\cite[Lemma~10.11]{chowbookII} it is invariant
under parallel translation with respect to every connection $D(t)$ and
it is convex, provided that the function $Q\mapsto\lambda_N(Q_p)$ is
concave. We can rewrite
\begin{equation*}
\lambda_N(Q_p)=\inf_{\{v\in\Lambda^2V_p\,:\, |v|_h=1\}}h(Q_p(v),v)\,;
\end{equation*}
so it is easy to conclude, by the bilinearity of the metric $h$ and
the concavity of $\inf$, that the function defining $\Omega$ is actually
concave and so its superlevels are convex. In order to finish the
proof we have to show that the ODE $dQ/dt=F(Q)$ preserves
$\Omega$. Now,  by standard facts in convex analysis, we only need to
prove that
\begin{equation*}
F_p(Q_p)\in T_{Q_p}\Omega_p\qquad\mbox{for every }p\in M\mbox{ such that }Q_p\in\partial\Omega_p\,,
\end{equation*}
where $\partial\Omega_p=\{Q_p\in\Omega_p:\exists v\in\Lambda^2V_p\quad
Q_p(v,v)=0\}$ and the tangent cone is
\begin{equation*}
T_{Q_p}\Omega_p=\{S_p\in S^2(\Lambda^2V^*_p): S_p(v,v)\geq 0\mbox{ for every }v\in\Lambda^2V_p\mbox{ such that }Q_p(v,v)=0\}
\end{equation*}
Let $v\in\Lambda^2V_p$ and $\{\theta_{\alpha}\}$ be respectively a
null eigenvector of $Q_p$ and an orthonormal basis of $\Lambda^2V_p$
that diagonalizes $Q_p$. Clearly
\begin{equation*}
v=v^{\alpha}\theta_{\alpha}\,,\qquad (Q_p)_{\alpha\beta}=\lambda_{\alpha}\delta_{\alpha\beta}\,.
\end{equation*}
with $\lambda_{\alpha}\geq 0$. Then
\begin{equation*}
(Q_p^2)_{\alpha\beta}=\lambda_{\alpha}^2\delta_{\alpha\beta}\,,\quad
(Q_p^{\#})_{\alpha\beta}=\frac{1}{2}(c_{\alpha}^{\gamma\nu})^2\lambda_{\gamma}\lambda_{\nu}\delta_{\alpha\beta}
\end{equation*}
and
\begin{equation*}
F_p(Q_p)(v,v)=\lambda_{\alpha}^2(v^{\alpha})^2+\frac{1}{2}(c_{\alpha}^{\gamma\nu})^2\lambda_{\gamma}\lambda_{\nu}(v^{\alpha})^2\geq 0\,
\end{equation*}
this completes the proof.
\end{proof}

\subsection{The evolution of the Weyl tensor}\ \\
By means of the evolution equations found for the curvatures, we are also
able to write the equation satisfied by the Weyl tensor along the RB
flow~\eqref{eqflow}. In~\cite{mancat1} the authors compute the
evolution equation of the Weyl tensor during the Ricci flow (see~\cite[Proposition 1.1]{mancat1})
and we use most of their computations here.
\begin{prop}
During the RB flow of an $n$--dimensional Riemannian manifold $(M,g)$
the Weyl tensor satisfies the following evolution equation
\begin{eqnarray}
\dt\WWW_{ijkl}&=&  \Delta\WWW_{ijkl}+2(\BBB(\WWW)_{ijkl}-\BBB(\WWW)_{ijlk}-\BBB(\WWW)_{iljk}+\BBB(\WWW)_{ikjl})\\ \nonumber
&& +2\rho\RRR\WWW_{ijkl} -g^{pq}\big(\WWW_{pjkl}\RRR_{qi}+\WWW_{ipkl}\RRR_{qj}+\WWW_{ijpl}\RRR_{qk}+\WWW_{ijkp}\RRR_{ql}\big)\\ \nonumber
&& +\frac{2}{(n-2)^2}(\Ric^2\varowedge g)_{ijkl}+\frac{1}{(n-2)}(\Ric\varowedge\Ric)_{ijkl}\\ \nonumber
&& -\frac{2\RRR}{(n-2)^2}(\Ric\varowedge
g)_{ijkl}+\frac{\RRR^2-|\Ric|^2}{(n-1)(n-2)^2}(g\varowedge
g)_{ijkl}\nonumber \,,
\end{eqnarray}
where $\BBB(W)_{ijkl}=g^{pq}g^{rs}\WWW_{ipjr}\WWW_{kqls}$.
\end{prop}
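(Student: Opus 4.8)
The plan is to differentiate the algebraic decomposition~\eqref{weyl}, conveniently rewritten by means of the Kulkarni--Nomizu product as
\[
\Rm \,=\, \WWW + \frac{1}{n-2}\,\Ric\varowedge g - \frac{\RRR}{2(n-1)(n-2)}\,g\varowedge g\,,
\]
so that $\dt\WWW = \dt\Rm - \tfrac{1}{n-2}\dt(\Ric\varowedge g) + \tfrac{1}{2(n-1)(n-2)}\dt(\RRR\,g\varowedge g)$, and then to substitute the evolution equations~\eqref{eqrm},~\eqref{eqrc},~\eqref{eqsc}, the Leibniz rule for $\varowedge$, and $\dt g=-2(\Ric-\rho\RRR g)$ from~\eqref{eqflow}. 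Since $g$ is parallel, $\Delta(\Ric\varowedge g)=\Delta\Ric\varowedge g$ and $\Delta(\RRR\,g\varowedge g)=\Delta\RRR\,g\varowedge g$, hence the Laplacian terms appearing in~\eqref{eqrm},~\eqref{eqrc},~\eqref{eqsc} reassemble directly into $\Delta\WWW_{ijkl}$.

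I would then split the resulting expression into the part independent of $\rho$ and the part proportional to $\rho$. The first part is, by construction, exactly the evolution of the Weyl tensor along the Ricci flow, computed in~\cite[Proposition~1.1]{mancat1}: it produces the $\BBB(\WWW)$ terms and the $\WWW\ast\Ric$ term of the statement, together with the three remaining ``quadratic in $\Ric$, $\RRR$, $g$'' lines. The only genuinely new computation is the collection of the $\rho$--dependent contributions, and the claim to be verified is that they all cancel except for the single term $2\rho\RRR\WWW_{ijkl}$.

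To check this I would list the $\rho$--parts of the ingredients: from~\eqref{eqrm} one has $-\rho(\nabla^2\RRR\varowedge g)+2\rho\RRR\,\Rm$; from $-\tfrac{1}{n-2}\dt\Ric\varowedge g$ via~\eqref{eqrc} one has $+\rho(\nabla^2\RRR\varowedge g)+\tfrac{\rho}{n-2}\Delta\RRR\,(g\varowedge g)$; from $-\tfrac{1}{n-2}\Ric\varowedge\dt g$ the $\rho$--part is $-\tfrac{2\rho\RRR}{n-2}\Ric\varowedge g$; from $\tfrac{1}{2(n-1)(n-2)}\dt\RRR\,(g\varowedge g)$ via~\eqref{eqsc} the $\rho$--part is $-\tfrac{\rho}{n-2}\Delta\RRR\,(g\varowedge g)-\tfrac{\rho\RRR^2}{(n-1)(n-2)}(g\varowedge g)$; and from $\tfrac{\RRR}{(n-1)(n-2)}\,g\varowedge\dt g$ the $\rho$--part is $\tfrac{2\rho\RRR^2}{(n-1)(n-2)}(g\varowedge g)$. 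The two $\nabla^2\RRR\varowedge g$ terms cancel; the two $\Delta\RRR\,(g\varowedge g)$ terms cancel; and, decomposing $2\rho\RRR\,\Rm = 2\rho\RRR\WWW + \tfrac{2\rho\RRR}{n-2}\Ric\varowedge g - \tfrac{\rho\RRR^2}{(n-1)(n-2)}g\varowedge g$ via the displayed identity, the $\RRR\,\Ric\varowedge g$ terms cancel while the $\RRR^2\,(g\varowedge g)$ terms cancel in the combination $(-1+2-1)$; what remains is precisely $2\rho\RRR\WWW_{ijkl}$, which is the extra term in the statement.

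The main obstacle is not this $\rho$--bookkeeping, which is short and self--contained, but the non--$\rho$ part: re--expressing $\BBB(\Rm)$ and the $\Rm\ast\Ric$ term of~\eqref{eqrm} in terms of $\BBB(\WWW)$, $\WWW\ast\Ric$ and the quadratic remainders requires the somewhat lengthy algebraic identities for the Kulkarni--Nomizu product already worked out in~\cite{mancat1}. Rather than redo that calculation I would invoke it directly, observing that, as far as these terms are concerned, the RB flow reduces to the Ricci flow upon setting $\rho=0$, the difference in the evolution of $g$ having been entirely absorbed into the $\rho$--terms treated above.
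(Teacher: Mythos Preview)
Your proposal is correct and follows the same strategy as the paper: differentiate the Weyl decomposition and substitute the evolution equations~\eqref{eqrm},~\eqref{eqrc},~\eqref{eqsc} together with~\eqref{eqflow}. The only difference is organizational: the paper splits the result into second--order terms (which assemble into $\Delta\WWW$) and zeroth--order terms (which it expands in full, reproducing the~\cite{mancat1} identities for $\BBB(\Rm)$ and $\Rm\ast\Ric$), whereas you split into $\rho$--independent and $\rho$--dependent parts, citing~\cite{mancat1} directly for the former and verifying---correctly---that the latter collapse to $2\rho\RRR\WWW_{ijkl}$.
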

\begin{proof}
By recalling the decomposition formula for the Weyl tensor~\eqref{weyl} we have
\begin{align*}
\dt\WWW=&\,\dt\Rm+\frac{1}{2(n-1)(n-2)}\Big(\dt\RRR g\varowedge g+2\dt
g\varowedge g\Big)-\frac{1}{n-2}\Big(\dt\Ric\varowedge
g+\Ric\varowedge \dt g\Big)\\
=&\,\M{L}_{II}+\M{L}_{0}\,,
\end{align*}  
where $\M{L}_{II}$ is the second order term in the curvatures and $\M{L}$ the $0$--th one.
We deal first with the higher order term; plugging in the evolution
equations of $\Rm, \Ric$ and $\RRR$ (Proposition~\ref{eqrmprop}) we get
\begin{eqnarray*}
\M{L}_{II}&=&\Delta\Rm-\rho(\nabla^2\RRR\varowedge g)+\frac{1-2(n-1)\rho}{2(n-1)(n-2)}\Delta\RRR g\varowedge g\\
&&-\frac{1}{n-2}(\Delta\Ric\varowedge g-(n-2)\rho\nabla^2\RRR\varowedge g-\rho\Delta\RRR g\varowedge g)\\
&=& \Delta\Rm+\frac{1-2(n-1)\rho+2(n-1)\rho}{2(n-1)(n-2)}\Delta\RRR g\varowedge g-\frac{1}{n-2}\Delta\Ric\varowedge g\\
&=&\Delta\WWW\,.
\end{eqnarray*}
Then we consider the lower order terms
\begin{eqnarray*}
(\M{L}_{0})_{ijkl}&=& 2(B(\Rm)_{ijkl}-B(\Rm)_{ijlk}-B(\Rm)_{iljk}+B(\Rm)_{ikjl})\\
&& - g^{pq}\big(\RRR_{pjkl}\RRR_{qi}+\RRR_{ipkl}\RRR_{qj}+\RRR_{ijpl}\RRR_{qk}+\RRR_{ijkp}\RRR_{ql}\big)\\
&& +2 \rho\RRR\Big(\WWW-\frac{1}{2(n-1)(n-2)}\RRR g\varowedge g+\frac{1}{n-2}\Ric\varowedge g\Big)_{ijkl}\\
&& +\frac{1}{2(n-1)(n-2)}(2|\Ric|^2g\varowedge g-2\rho\RRR^2 g\varowedge g-4\RRR\Ric\varowedge g+4\rho\RRR^2g\varowedge g)_{ijkl}\\
&& -\frac{1}{n-2}[2(\Rm*\Ric)\varowedge g-2\Ric^2\varowedge g-2\Ric\varowedge\Ric+2\rho\RRR\Ric\varowedge g]_{ijkl}\\
&=& 2(B(\Rm)_{ijkl}-B(\Rm)_{ijlk}-B(\Rm)_{iljk}+B(\Rm)_{ikjl})\\
&& - g^{pq}\big(\RRR_{pjkl}\RRR_{qi}+\RRR_{ipkl}\RRR_{qj}+\RRR_{ijpl}\RRR_{qk}+\RRR_{ijkp}\RRR_{ql}\big)+2\rho\RRR\WWW_{ijkl}\\
&& -\frac{2}{n-2}[(\Rm*\Ric)\varowedge g-\Ric^2\varowedge g-\Ric\varowedge\Ric]_{ijkl}\\
&& -\frac{2\RRR}{(n-1)(n-2)}(\Ric\varowedge g)_{ijkl}+\frac{|\Ric|^2}{(n-1)(n-2)}(g\varowedge g)_{ijkl}\,,
\end{eqnarray*}
where $(\Rm*\Ric)_{ab}=\RRR_{apbq}\RRR_{st}g^{ps}g^{qt}$ and $(\Ric^2)_{ab}=\RRR_{ap}\RRR_{bq}g^{pq}$.\\
Now we deal separately with every term containing the full curvature
$\Rm$, using its decomposition formula, expanding the Kulkarni--Nomizu
products and then contracting again. We have that
\begin{equation*}
[(g\varowedge g)*\Ric]_{ab}=2[\RRR g-\Ric]_{ab}\,, [(\Ric\varowedge g)*\Ric]_{ab}=[-2\Ric^2+\RRR\Ric+|\Ric|^2g]_{ab}\,.
\end{equation*}
Hence
\begin{eqnarray}\label{pezzo1}
(\Rm*\Ric)\varowedge g&=&(\WWW*\Ric)\varowedge g-\frac{2}{n-2}\Ric^2\varowedge g\\ \nonumber
&& +\frac{n\RRR}{(n-1)(n-2)}\Ric\varowedge g+\frac{(n-1)|\Ric|^2-\RRR^2}{(n-1)(n-2)}g\varowedge g\,.
\end{eqnarray}
Then
\begin{eqnarray*}
\RRR_{qi}\RRR_{pjkl}g^{pq}&=&\RRR_{qi}\Big(W_{pjkl}-\frac{\RRR}{(n-1)(n-2)}(g_{pk}g_{jl}-g_{pl}g_{jk})\Big)g^{pq}\\
&& +\frac{1}{n-2}\RRR_{qi}(\RRR_{pk}g_{jl}+\RRR_{jl}g_{pk}-\RRR_{pl}g_{jk}-\RRR_{jk}g_{pl})g^{pq}\\
&=&\RRR_{qi}\WWW_{pjkl}g^{pq}-\frac{\RRR}{(n-1)(n-2)}(\RRR_{ik}g_{jl}-\RRR_{il}g_{jk})\\
&& +\frac{1}{n-2}(\RRR^2_{ik}g_{jl}-\RRR^2_{il}g_{jk}+\RRR_{ik}\RRR_{jl}-\RRR_{il}\RRR_{jk})\,.
\end{eqnarray*}
Interchanging the index and using the symmetry properties we get
\begin{align}\label{pezzo2}
 g^{pq}\big(\RRR_{pjkl}\RRR_{qi}&\, +\RRR_{ipkl}\RRR_{qj}+\RRR_{ijpl}\RRR_{qk}+\RRR_{ijkp}\RRR_{ql}\big)\\ \nonumber
 =&\, g^{pq}\big(\WWW_{pjkl}\RRR_{qi}+\WWW_{ipkl}\RRR_{qj}+\WWW_{ijpl}\RRR_{qk}+\WWW_{ijkp}\RRR_{ql}\big)\\ \nonumber
+ &\,\frac{2}{n-2}(\Ric^2\varowedge g)_{ijkl}+\frac{2}{n-2}(\Ric\varowedge\Ric)_{ijkl}-\frac{2\RRR}{(n-1)(n-2)}(\Ric\varowedge g)_{ijkl}\,.
\end{align}
Finally the "$B$"--terms:
\begin{eqnarray*}
B(\Rm)_{abcd}&=&\Big(\WWW-\frac{\RRR}{2(n-1)(n-2)}g\varowedge g+\frac{1}{n-2}\Ric\varowedge g\Big)_{apbq}\\
&& \Big(\WWW-\frac{\RRR}{2(n-1)(n-2)}g\varowedge g+\frac{1}{n-2}\Ric\varowedge g\Big)_{csdt}g^{ps}g^{qt}
\end{eqnarray*}
\begin{equation*}
\big(\WWW_{apbq}(g\varowedge g)_{csdt}+(g\varowedge g)_{apbq}\WWW_{csdt}\big)g^{ps}g^{qt}=-2\WWW_{adbc}-2\WWW_{cbda}
\end{equation*}
\begin{align*}
\big(\WWW_{apbq}&\,(\Ric\varowedge g)_{csdt}+(\Ric\varowedge g)_{apbq}\WWW_{csdt}\big)g^{ps}g^{qt}=(\WWW*\Ric)_{ab}g_{cd}+(\WWW*\Ric)_{cd}g_{ab}\\
&\,- (\WWW_{cbdp}\RRR_{aq}+\WWW_{cpda}\RRR_{bq}+\WWW_{adbp}\RRR_{cq}+\WWW_{apbd}\RRR_{dq})g^{pq}
\end{align*}
\begin{eqnarray*}
(g\varowedge g)_{apbd}(g\varowedge g)_{csdt}g^{ps}g^{qt}=4\big((n-2)g_{ab}g_{cd}+g_{ac}g_{bd}\big)
\end{eqnarray*}
\begin{align*}
\big((\Ric\varowedge g)_{apbq}(g\varowedge g)_{csdt}&\,+(\Ric\varowedge g)_{csdt}(g\varowedge g)_{apbq}\big)g^{ps}g^{qt}\\
&\,=2\big( (n-4)\RRR_{ab}g_{cd}+(n-4)\RRR_{cd}g_{ab}+2\RRR_{ac}g_{bd}+2\RRR_{bd}g_{ac}\big)
\end{align*}
\begin{align*}
(\Ric\varowedge g)_{abpq}&\,(\Ric\varowedge g)_{csdt}g^{ps}g^{qt}= -2\RRR^2_{ab}g_{cd}-2\RRR^2_{cd}g_{ab}+\RRR^2_{ac}g_{bd}+\RRR^2_{bd}g_{ac}\\
&\,+ (n-4)\RRR_{ab}\RRR_{cd}+2\RRR_{ac}\RRR_{bd}+\RRR(\RRR_{ab}g_{cd}+\RRR_{cd}g_{ab})+|\Ric|^2g_{ab}g_{cd}
\end{align*}
Now, adding the same type quantities for the different index permutations and using the symmetry properties of $\WWW$ we obtain
\begin{align}\label{pezzo3}
B(\Rm)_{ijkl}&\,-B(\Rm)_{ijlk}-B(\Rm)_{iljk}+B(\Rm)_{ikjl}\\
=&\, B(\WWW)_{ijkl}-B(\WWW)_{ijlk}-B(\WWW)_{iljk}+B(\WWW)_{ikjl}\nonumber\\
&\,+\frac{1}{n-2}\big((\WWW*\Ric)\varowedge g\big)_{ijkl}-\frac{1}{(n-2)^2}(\Ric^2\varowedge g)_{ijkl}+\frac{1}{2(n-2)}(\Ric\varowedge\Ric)_{ijkl}\nonumber\\
&\, +\frac{\RRR}{(n-1)(n-2)^2}(\Ric\varowedge g)_{ijkl}+\Big(\frac{|\Ric|^2}{2(n-2)^2}-\frac{\RRR^2}{2(n-1)(n-2)^2}\Big)(g\varowedge g)_{ijkl}\,.\nonumber
\end{align}
We are ready to complete the computation of the $0$--th order term in the evolution equation, using the previous formulas~\eqref{pezzo1},~\eqref{pezzo2},~\eqref{pezzo3}
\begin{eqnarray*}
(\M{L}_{0})_{ijkl}&=& 2(B(\WWW)_{ijkl}-B(\WWW)_{ijlk}-B(\WWW)_{iljk}+B(\WWW)_{ikjl})+2\rho\RRR\WWW_{ijkl}\\ \nonumber
&& - g^{pq}\big(\WWW_{pjkl}\RRR_{qi}+\WWW_{ipkl}\RRR_{qj}+\WWW_{ijpl}\RRR_{qk}+\WWW_{ijkp}\RRR_{ql}\big)\\ \nonumber
&& +\frac{2}{(n-2)^2}(\Ric^2\varowedge g)_{ijkl}+\frac{1}{(n-2)}(\Ric\varowedge\Ric)_{ijkl}\\ \nonumber
&& -\frac{2\RRR}{(n-2)^2}(\Ric\varowedge g)_{ijkl}+\frac{\RRR^2-|\Ric|^2}{(n-1)(n-2)^2}(g\varowedge g)_{ijkl}
\end{eqnarray*}
\end{proof}

\subsection{Conditions preserved in dimension three}\ \\
In general dimension, it is very hard to find other curvature conditions preserved by the flow, and this is due principally to the complex structure of the reaction terms; for example in the evolution equation satisfied by the Ricci tensor~\eqref{eqrc}, the reaction terms involve the full curvature tensor. Therefore it is easier to restrict our attention to the three--dimensional case, in which the Weyl part of the Riemann tensor vanishes and all the geometric informations are encoded in the Ricci tensor.

In the special case of dimension three, we can use also the evolution equation~\eqref{evol_P} of the pull--back of the curvature operator to obtain more refined conditions preserved, because we can rewrite the ODE associated to the evolution of $\M{P}$ as a system of ODEs in the eigenvalues of $\M{P}$ that, by Proposition~\ref{proppm}, are nothing but the sectional curvatures of $\M{R}$. This point of view has been introduced for the Ricci flow by Hamilton in~\cite{hamilton10} and can be easily generalized to the RB flow as follows.
\begin{lemma}
If $n=3$, then $\M{P}_p$ has $3$ eigenvalues $\lambda,\mu,\nu$ and the ODE fiberwise associated to equation~\eqref{evol_P} can be written as the following system
\begin{equation}\label{eq_eigen}
\left\{
\begin{array}{lll}
\frac{d\lambda}{dt}&=&2\lambda^2+2\mu\nu-4\rho\lambda(\lambda+\mu+\nu)\,,\\
\frac{d\mu}{dt}&=&2\mu^2+2\lambda\nu-4\rho\mu(\lambda+\mu+\nu)\,,\\
\frac{d\nu}{dt}&=&2\nu^2+2\lambda\mu-4\rho\nu(\lambda+\mu+\nu)\,.
\end{array}
\right.
\end{equation}
In particular, if we assume $\lambda(0)\geq\mu(0)\geq\nu(0)$, then $\lambda(t)\geq\mu(t)\geq\nu(t)$ as long as the solution of the system exists.
\end{lemma}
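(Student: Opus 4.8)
The plan is to show that, in dimension three, the fiberwise ODE attached to~\eqref{evol_P} in the sense of Theorem~\ref{time_MP} --- obtained by discarding the elliptic part $\Delta_D Q - 2\rho\,\varphi^*(\nabla^2\tr_h(Q))\varowedge h$ and keeping the reaction term $F(Q) = 2Q^2 + 2Q^{\#} - 4\rho\,\tr_h(Q)\,Q$ --- reduces to the scalar system~\eqref{eq_eigen} for the eigenvalues of $Q$, and then that the ordering of the eigenvalues is propagated by~\eqref{eq_eigen}.

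First, since $\dim M = 3$ the bundle $\Lambda^2 V$ has rank three, so $\M{P}_p\in S^2(\Lambda^2 V_p)$, seen as a self--adjoint endomorphism of a $3$--dimensional Euclidean space, has three real eigenvalues $\lambda\geq\mu\geq\nu$; I would fix an orthonormal eigenbasis $\{\theta_\alpha\}_{\alpha=1,2,3}$ of $\Lambda^2 V_p$ at the initial time, so that $Q(0)=\mathrm{diag}(\lambda,\mu,\nu)$ there. The key input is the special algebraic structure of dimension three: $(\Lambda^2 V_p,[\,\cdot\,,\,\cdot\,])\cong\mathfrak{so}(3)\cong(\R^3,\times)$, whose structure constants $c_\alpha^{\gamma\delta}$ in \emph{any} orthonormal basis are, up to sign and the fixed normalization of the paper's convention, the Levi--Civita symbol $\epsilon_{\alpha\gamma\delta}$. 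Hence, for $Q$ diagonal in $\{\theta_\alpha\}$: the composition square $Q^2$ is diagonal with entries $\lambda^2,\mu^2,\nu^2$; the term $\tr_h(Q)\,Q$ is diagonal; and, by the formula for $Q^{\#}$ recorded in the proof of Proposition~\ref{op_nonneg}, namely $(Q^{\#})_{\alpha\beta}=\tfrac12(c_\alpha^{\gamma\delta})^2\lambda_\gamma\lambda_\delta\,\delta_{\alpha\beta}$ in an eigenbasis, together with $\sum_{\gamma,\delta}\epsilon_{\alpha\gamma\delta}\epsilon_{\beta\gamma\delta}=2\,\delta_{\alpha\beta}$, also $Q^{\#}$ is diagonal, with entries $\mu\nu,\lambda\nu,\lambda\mu$ (equivalently, in dimension three $Q^{\#}$ is the matrix of cofactors of $Q$). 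Therefore $F(Q)$ is diagonal in $\{\theta_\alpha\}$ whenever $Q$ is: the linear subspace of operators diagonal in this fixed basis is invariant under the flow of $F$, so by uniqueness for the ODE the solution $Q(t)$ stays diagonal in $\{\theta_\alpha\}$ and its three diagonal entries $\lambda(t),\mu(t),\nu(t)$ satisfy exactly~\eqref{eq_eigen}.

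For the ordering, I would subtract the first two equations of~\eqref{eq_eigen} to get
\begin{equation*}
\frac{d}{dt}(\lambda-\mu)=\big(\lambda-\mu\big)\Big(2(\lambda+\mu-\nu)-4\rho(\lambda+\mu+\nu)\Big)\,,
\end{equation*}
a linear homogeneous ODE for $\lambda-\mu$ with coefficient continuous along the solution, so $\lambda-\mu$ cannot change sign and $\lambda(0)\geq\mu(0)$ forces $\lambda(t)\geq\mu(t)$; the identical computation for $\mu-\nu$ gives $\mu(t)\geq\nu(t)$, and hence $\lambda(t)\geq\mu(t)\geq\nu(t)$ on the whole maximal interval of existence.

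The one genuinely delicate point --- and the step I expect to be the crux --- is the diagonality of $Q^{\#}$ in the eigenbasis, hence the invariance of the diagonal subspace under $F$; this is exactly where $n=3$ is used, since for $n\geq 4$ the Lie algebra $\mathfrak{so}(n)$ is no longer governed by Levi--Civita--type structure constants and the reduction to a $3\times 3$ system breaks down. The remaining computations ($Q^2$, $\tr_h(Q)\,Q$, and the sign--preservation argument) are routine.
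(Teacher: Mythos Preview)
Your proposal is correct and follows essentially the same approach as the paper: both arguments use the identification $\Lambda^2 V_p\cong\mathfrak{so}(3)$ to show that $Q^2$, $Q^{\#}$ and $\tr_h(Q)\,Q$ are simultaneously diagonal in an eigenbasis of $Q$ (the paper refers to~\cite[Chapter~6.4]{chknopf} for this, you spell it out via the Levi--Civita symbol), and both establish the ordering by writing $\frac{d}{dt}(\lambda-\mu)$ and $\frac{d}{dt}(\mu-\nu)$ as homogeneous linear ODEs in the respective differences. Your factored coefficient $2(\lambda+\mu-\nu)-4\rho(\lambda+\mu+\nu)$ agrees with the paper's $2\big((1-2\rho)(\lambda+\mu)-(1+2\rho)\nu\big)$ after expansion.
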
 
\begin{proof}
We can pointwise identify $V_p $ with an orthonormal frame of $\R^3$ with the standard basis. Then $\Lambda^2V_p\simeq\mathfrak{so}(3)$ with the standard structure constants and if an algebraic operator $Q_p$ is diagonal, both $Q_p^2$ and $Q_p^{\#}$ are diagonal with respect to the same basis (for the detailed computation of this fact, see~\cite[Chapter~6.4]{chknopf}). Hence the ODE $\frac{d}{dt} Q_p=F_p(Q_p)$, associated fiberwise to equation~\eqref{evol_P}, preserves the eigenvalues of $Q_p$, that is, if $Q_p(0)$ is diagonal with respect to an orthonormal basis, $Q_p(t)$ stays diagonal with respect to the same basis and the ODE can be rewritten as the system~\eqref{eq_eigen} in the eigenvalues.\\
To prove the last statement, we observe that
\begin{align*}
\frac{d}{dt}(\lambda-\mu)&=\,2(\lambda-\mu)\big((1-2\rho)(\lambda+\mu)-(1+2\rho)\nu\big)\\
\frac{d}{dt}(\mu-\nu)&=\,2(\mu-\nu)\big((1-2\rho)(\mu+\nu)-(1+2\rho)\lambda\big)\,.
\end{align*}
\end{proof}
\begin{rem}\label{eigen_proof}
We already proved that the differential operator in the evolution equation of $\M{P}$ is uniformly elliptic if $\rho<1/2(n-1)$, that is $\rho<1/4$ in dimension three. Therefore any geometric condition expressed in terms of the eigenvalues is preserved along the RB flow if the cone identified by the condition is closed, convex and preserved by the system~\eqref{eq_eigen}. 
\end{rem}

By using this method, we can prove 

 \begin{prop}\label{cond_3d}
 Let $(M,g(t))_{t\in[0,T)}$ be a compact, three--dimensional, solution of the RB flow~\eqref{eqflow}. If $\rho<1/4$, then
 \begin{itemize}
 \item[(i)] nonnegative Ricci curvature is preserved along the flow;
 \item[(ii)] nonnegative sectional curvature is preserved along the flow;
 \item[(iii)] the pinching inequality $\Ric\geq \varepsilon\RRR g$ is preserved along the flow for any $\varepsilon\leq 1/3$.
 \end{itemize}
 \end{prop}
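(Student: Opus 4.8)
The plan is to invoke Remark~\ref{eigen_proof}: for each of the three conditions it is enough to exhibit the corresponding subset $K\subset S^2(\Lambda^2V^*)$ as closed, convex in the fibers, invariant under parallel translation with respect to the connections $D(t)$, and preserved by the fiberwise ODE system~\eqref{eq_eigen}. The hypothesis $\rho<1/4$ is exactly what makes the operator $\M{L}$ in~\eqref{evol_P} uniformly elliptic (this was checked in the proof of Proposition~\ref{op_nonneg}), so the Vectorial Maximum Principle applies and these four properties suffice. Closedness is clear in each case.

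For convexity and parallel invariance I would note that each of the three conditions has the form $\{Q:\ \ell(Q)\ \text{positive semidefinite}\}$ for a fixed linear map $\ell\colon S^2(\Lambda^2V^*)\to S^2V^*$: one takes $\ell(Q)=\Ric(Q)$ for (i); $\ell=\Id$ for (ii), using that in dimension three nonnegative sectional curvature coincides with nonnegativity of the curvature operator because every $2$--vector is decomposable; and $\ell(Q)=\Ric(Q)-\eps\,\RRR(Q)\,g$ for (iii). Thus $K$ is the preimage of the closed convex $O(n)$--invariant cone of positive semidefinite symmetric $2$--forms, so it is closed and convex in the fibers, and invariance under parallel translation follows exactly as in Proposition~\ref{op_nonneg} from~\cite[Lemma~10.11]{chowbookII}. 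In particular (ii) is already a consequence of Proposition~\ref{op_nonneg} and the dimension--three equivalence just recalled (or, directly: on the face $\nu=0$ the system~\eqref{eq_eigen} gives $\frac{d\nu}{dt}=2\lambda\mu\ge0$, since there $\lambda\ge\mu\ge0$), and (i) is the $\eps=0$ case of (iii). So the only genuine computation is the ODE--preservation of the pinching cone.

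By the preceding lemma I may work in the chamber $\lambda\ge\mu\ge\nu$, where $\RRR=2S$ with $S:=\lambda+\mu+\nu$ (Remark~\ref{remriem}) and the Ricci eigenvalues are $\mu+\nu\le\lambda+\nu\le\lambda+\mu$; hence $\Ric\ge\eps\RRR g$ is equivalent to the single inequality $\mu+\nu\ge 2\eps S$, and the boundary face of interest is $\mu+\nu=2\eps S$, on which $\lambda=(1-2\eps)S$. From~\eqref{eq_eigen} one computes
\[
\frac{d}{dt}(\mu+\nu)=2(\mu^2+\nu^2)+2\lambda(\mu+\nu)-4\rho S(\mu+\nu),
\qquad
\frac{dS}{dt}=(\lambda^2+\mu^2+\nu^2)+(1-4\rho)S^2,
\]
and substituting $\mu+\nu=2\eps S$, $\lambda=(1-2\eps)S$ into $\frac{d}{dt}(\mu+\nu-2\eps S)$ the $\rho$--terms cancel and (using $2x-1-x^2=-(x-1)^2$ with $x=1-2\eps$) one is left with
\[
\frac{d}{dt}\big(\mu+\nu-2\eps S\big)\Big|_{\mu+\nu=2\eps S}=2(1-\eps)(\mu^2+\nu^2)-8\eps^3 S^2.
\]
Since $\eps\le 1/3<1$, inserting $\mu^2+\nu^2\ge\frac12(\mu+\nu)^2=2\eps^2 S^2$ bounds this from below by $4\eps^2(1-\eps)S^2-8\eps^3 S^2=4\eps^2(1-3\eps)S^2\ge0$, the sign being governed precisely by $\eps\le 1/3$; the same face computation with $\eps=0$ gives $\frac{d}{dt}(\mu+\nu)=2(\mu^2+\nu^2)\ge0$, which is (i). Hence $\mu+\nu-2\eps S$ stays nonnegative and the pinching is preserved.

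The only real work is in these boundary computations, and the subtle point is in (iii): one has to use that on the face $\lambda$ is \emph{forced} to equal $(1-2\eps)S$ — this is what makes the $\rho$--dependent terms cancel and collapses the derivative to $2(1-\eps)(\mu^2+\nu^2)-8\eps^3 S^2$ — and then the elementary inequality $\mu^2+\nu^2\ge\frac12(\mu+\nu)^2$ produces the sharp threshold $\eps\le 1/3$. Note that the sign of $\rho$ is irrelevant to all the ODE computations; $\rho<1/4$ enters only through the uniform ellipticity needed for the maximum principle. As a consistency check one can also trace the pinching inequality to get $(1-3\eps)\RRR\ge 0$, and verify (trivially, from $\lambda\ge\mu\ge\nu$) that $\mu+\nu$ is indeed the smallest Ricci eigenvalue, so that no additional constraint is hidden.
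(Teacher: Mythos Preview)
Your proof is correct and follows the same maximum--principle/ODE--system strategy as the paper. The only differences are cosmetic: you parametrize the pinching cone via $\mu+\nu-2\eps S\ge0$ whereas the paper uses the equivalent form $\lambda-C(\mu+\nu)\le0$ with $C=(1-2\eps)/(2\eps)$ (treating $\eps=1/3$ separately and proving (i) independently rather than as the $\eps=0$ case), and on the boundary you invoke $\mu^2+\nu^2\ge\tfrac12(\mu+\nu)^2$ where the paper uses $\mu\nu\le\tfrac12(\mu^2+\nu^2)$---both yielding the same sharp threshold $\eps\le1/3$.
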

\begin{proof}
(i) If $\Ric(g(0))\geq 0$, then $\Ric_{g(t)}\geq 0$.\\
The eigenvalues of $\Ric$ are the pairwise sums of the sectional curvatures, hence the condition is identified by the cone
\begin{equation*}
K_p=\{Q_p:(\mu+\nu)(Q_p)\geq 0 \}\,.
\end{equation*}
The closedness is obvious; in order to see that $K_p$ is convex, we observe that the greatest eigenvalue can be characterized by $\lambda(Q_p)=\max\{ Q_p(v,v):\,v\in V_p \,\,|v|_h=1\}$, hence it is convex. Then the function $Q_p\mapsto\mu(Q_p)+\nu(Q_p)=\tr(Q_p)-\lambda(Q_p)$ is concave and this implies that its superlevels are convex. By system~\eqref{eq_eigen} we obtain
\begin{equation*}
\frac{d}{dt}(\mu+\nu)=2\mu^2+2\nu^2+2\lambda(\mu+\nu)-4\rho(\mu+\nu)\tr(Q_p)\,.
\end{equation*}
There is the stationary solution corresponding to $\mu(0)=0=\nu(0)$. Otherwise, whenever $\mu(t_0)+\nu(t_0)=0$ with $\mu(t_0)\neq 0$ and $\nu(t_0)\neq 0$, $\frac{d}{dt}(\mu+\nu)(t_0)=2(\mu^2+\nu^2)(t_0)>0$, then $K$ is preserved.\\
(ii) If $\Sec(g(0))\geq 0$, then $\Sec_{g(t)}\geq 0$.\\
This condition is the non negativity of $\M{P}$, already proved in general dimension in Proposition~\ref{op_nonneg}, identified by the cone $K_p=\{Q_p:\, \nu(Q_p)\geq 0\}$, which is convex as superlevel of a concave function. We suppose that $\nu(t_0)=0$, then
\begin{equation*}
\frac{d}{dt}\nu(t_0)=2\lambda(t_0)\mu(t_0)\geq 0
\end{equation*} 
because the order between the eigenvalues is preserved and therefore $\lambda(t_0)\geq\mu(t_0)\geq 0$.\\
(iii) For every $\varepsilon\in (0,1/3]$, if $\Ric(g(0))-\varepsilon\RRR(g(0))g(0)\geq 0$, then $\Ric_{g(t)}-\varepsilon\RRR_{g(t)}g(t)\geq 0$.\\
Translating in terms of eigenvalues of $\M{P}$, the condition means $\mu(Q_p)+\nu(Q_p)-2\varepsilon\tr(Q_p)\geq 0$, that is $\lambda(Q_p)\leq\frac{1-2\varepsilon}{2\varepsilon}(\mu(Q_p)+\nu(Q_p))$, then the right cone is
\begin{equation*}
K_p=\{Q_p:\lambda(Q_p)-C(\varepsilon)(\mu(Q_p)+\nu(Q_p))\leq 0 \}\,,
\end{equation*}
where $C(\varepsilon)=\frac{1-2\varepsilon}{2\varepsilon}\in [1/2,+\infty)$. The defining function is the sum of two convex function, hence its sublevels are convex. Now, for $C=1/2$, that corresponds to $\varepsilon=1/3$, we have $\lambda(0)=\mu(0)=\nu(0)$ at each point of $M$,  that is the initial metric $g(0)$ has constant sectional curvature and this condition is preserved along the flow.\\
For $C>1/2$, we suppose $\lambda(t_0)=C(\mu(t_0)+\nu(t_0))$, then
\begin{align*}
\frac{d}{dt}(\lambda-C(&\mu+\nu))(t_0)=\,2\big[\lambda^2+\mu\nu-C(\mu^2+\nu^2+\lambda(\mu+\nu))-2\rho\tr(Q_p)(\lambda-C(\mu+\nu))\big](t_0)\\
&=\, 2\big[C^2(\mu(t_0)+\nu(t_0))^2+\mu(t_0)\nu(t_0)-C(\mu(t_0)^2+\nu(t_0)^2)-C^2(\mu(t_0)+\nu(t_0))^2\big]\\
&\leq\, (1-2C)(\mu(t_0)^2+\nu(t_0)^2)\leq 0\,.
\end{align*}
\end{proof}

\subsection{Hamilton--Ivey estimate}\ \\
A remarkable property of the three--dimensional Ricci flow is the pinching estimate, independently proved by Hamilton in~\cite{hamilton9} and Ivey in~\cite{ivey1}, which says that positive sectional curvature dominates negative sectional curvature during the Ricci flow, that is, if the initial metric $g_0$ has a negative sectional curvature somewhere, the Ricci flow starting at $g_0$ evolves the scalar curvature towards the positive semiaxis in future times, that means that there will be a greater (in absolute value) positive sectional curvature. \\
We have generalized the pinching estimate and some consequences for positive values of the parameter $\rho$. In the same notation used before, let $\lambda\geq\mu\geq\nu$ be the ordered eigenvalues of the curvature operator.
\begin{teo}[Hamilton-Ivey Estimate]\label{HIprop}
Let $(M,g(t))$ be a solution of the RB on a compact three--manifold such that the initial metric satisfies the normalizing assumption $\min_{p\in M}\nu_p(0)\geq -1$. If $\rho\in[0,1/6)$, then at any point $(p,t)$ where $\nu_p(t)<0$ the scalar curvature satisfies
\begin{equation}\label{HIeq}
\RRR\geq|\nu|\big(\log(|\nu|)+\log(1+2(1-6\rho)t)-3\big)
\end{equation} 
\end{teo}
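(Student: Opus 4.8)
The plan is to run the classical Hamilton--Ivey argument (see~\cite[Chapter~10]{chowbookII}) through the eigenvalue system~\eqref{eq_eigen} and the time--dependent vectorial maximum principle of Theorem~\ref{time_MP}. Since $n=3$ and $\rho<1/6<1/4=1/2(n-1)$, Remark~\ref{eigen_proof} tells us that the differential operator in the evolution equation~\eqref{evol_P} for the pulled--back curvature operator $\M{P}$ is uniformly elliptic, so it is enough to produce a family $\{K(t)\}_{t\in[0,T)}$ of closed, fibrewise convex subsets of the bundle of algebraic curvature operators which is invariant under parallel translation with respect to the connections $D(t)$, has closed space--time track, contains $g_0$ at $t=0$, is preserved by the fibrewise ODE~\eqref{eq_eigen}, and forces~\eqref{HIeq}; that $\M{P}(t)\in K(t)$ then gives $\M{R}_{g(t)}\in K(t)$ by Proposition~\ref{proppm}, which is the assertion. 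Passing to eigenvalues is legitimate because, as in Lemma~\ref{eq_eigen}, the ODE keeps a diagonalised operator diagonal and preserves the ordering $\lambda\ge\mu\ge\nu$, so the whole question becomes one about an ODE region in $\RR^3$.

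Concretely, I would set $\psi(t)=1+2(1-6\rho)t$, which is positive and increasing on $[0,\infty)$ precisely because $\rho<1/6$, with $\psi(0)=1$, and let $K(t)$ be cut out by the two conditions: (i) the scalar lower bound $\RRR\ge -6/\psi(t)$, and (ii) the logarithmic pinching $\RRR\ge|\nu|\big(\log|\nu|+\log\psi(t)-3\big)$ when $\nu<0$ --- the latter being packaged, exactly as in~\cite[Chapter~10]{chowbookII}, by replacing the barrier $x\mapsto x(\log x-3)$ by a convex, non--decreasing modification so that the corresponding region of curvature operators is fibrewise convex and closed. This convexification, and the fact that a region described through $O(3)$--invariants of the operator is invariant under parallel translation for any metric--compatible connection (exactly as in the proof of Proposition~\ref{op_nonneg}), are the standard, purely algebraic ingredients of the classical proof, which carry over verbatim. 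The normalisation $\min_p\nu_p(0)\ge -1$ places $g_0$ in $K(0)$, and by the choice of the modified barrier, membership in $K(t)$ implies~\eqref{HIeq}.

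The substance is to check that~\eqref{eq_eigen} preserves the region defining $K(t)$; by the tangent--cone criterion of~\cite[Chapter~10]{chowbookII} it suffices to verify that the vector field of~\eqref{eq_eigen} points into the region at each boundary point. On the face $\{\lambda+\mu+\nu=-3/\psi\}$, writing $S=\lambda+\mu+\nu$ and $|X|^2=\lambda^2+\mu^2+\nu^2$, one gets from~\eqref{eq_eigen}
\[
\frac{d}{dt}\Big(S+\frac{3}{\psi}\Big)=|X|^2+(1-4\rho)S^2-\frac{3\dot\psi}{\psi^2}\ \ge\ \frac{9(1-4\rho)-6(1-6\rho)}{\psi^2}=\frac{3}{\psi^2}>0,
\]
which is precisely the scalar curvature lower bound of Proposition~\ref{Rcresce} (and of~\eqref{bound_sc}) rewritten with this $\psi$, and uses only $\rho\le 1/4$. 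On the logarithmic face, at a point where $\nu<0$ and equality holds in~\eqref{HIeq}, I would set $\Phi=2\psi S-g(-\psi\nu)$ with $g(x)=x\log x-3x$; using the saturation identity $\log(-\psi\nu)=3-2S/\nu$ a short computation gives
\[
\dot\Phi=\dot\psi\,\nu+2\psi\,\dot S+\psi\,\dot\nu\Big(1-\frac{2S}{\nu}\Big),
\]
and then substituting $\dot S,\dot\nu$ from~\eqref{eq_eigen}, $\dot\psi=2(1-6\rho)$ and $1-2S/\nu=-(2\lambda+2\mu+\nu)/\nu$ reduces the required $\dot\Phi\ge 0$ to an algebraic inequality among the ordered eigenvalues $\lambda\ge\mu\ge\nu$ with parameter $\rho$, to be closed using in addition the order $\nu\le\mu\le\lambda$ and the scalar bound $S\ge -3/\psi$. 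The kink in the modified barrier is handled by intersecting the tangent cones of the two faces.

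The step I expect to be the main obstacle is exactly this last algebraic inequality on the logarithmic face: the negative contribution $\dot\psi\,\nu=2(1-6\rho)\nu<0$ coming from the time--dependence of $\psi$ has to be absorbed by the reaction terms $2\psi\dot S+\psi\dot\nu(1-2S/\nu)$ produced by~\eqref{eq_eigen}, and it is in balancing these --- using the eigenvalue ordering and the scalar bound --- that the hypothesis $\rho\in[0,1/6)$, and the coefficient $1-6\rho$ in $\psi$, enter. Everything else (fibrewise convexity and closedness of $K(t)$, its parallel invariance, the inclusion $g_0\in K(0)$, and the scalar face) is either routine or already contained in Proposition~\ref{Rcresce} and in the classical references.
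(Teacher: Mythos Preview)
Your plan is correct and is essentially the paper's own approach: the time--dependent vectorial maximum principle on a family $K(t)$ cut out by a scalar lower bound together with the logarithmic pinching, with preservation checked fibrewise through the eigenvalue system~\eqref{eq_eigen}. Two differences in execution, however, are worth knowing, because they dissolve precisely the obstacle you anticipate.

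First, instead of your product barrier $\Phi=2\psi S-g(-\psi\nu)$ the paper differentiates the \emph{quotient}
\[
f(t)=\frac{\lambda+\mu+\nu}{-\nu}-\log(-\nu)-\log\psi(t)
\]
(so that $\Phi=\psi(-\nu)(f+3)$, and the two are equivalent on the boundary). Along~\eqref{eq_eigen} this gives, after the cancellations,
\[
f'=\frac{2}{\nu^2}\Big[-\nu(\lambda^2+\mu^2+\lambda\mu)+\lambda\mu(\lambda+\mu)\Big]\;-\;2\nu\;+\;4\rho(\lambda+\mu+\nu)\;-\;\frac{2(1-6\rho)}{\psi}\,.
\]
The bracket is the classical Hamilton--Ivey nonnegative term (for $\nu<0$), so it is simply discarded; then $\rho\ge 0$ and the ordering $\lambda\ge\mu\ge\nu$ yield $4\rho(\lambda+\mu+\nu)\ge 12\rho\nu$, hence
\[
f'\ \ge\ -2(1-6\rho)\Big(\nu+\frac{1}{\psi}\Big).
\]

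Second, the paper imposes the logarithmic constraint in $K(t)$ only on the region $\nu\le -1/\psi(t)$, not on all of $\{\nu<0\}$; this threshold is exactly what makes $\nu+1/\psi\le 0$ and closes $f'\ge 0$. In particular, contrary to what you expect, the scalar face $S\ge -3/\psi$ plays \emph{no} role on the logarithmic face: the closure uses only the eigenvalue ordering (through $S\ge 3\nu$), the sign condition $\rho\ge 0$, and the activation threshold $\nu\le -1/\psi$. So your $\Phi$--computation will work, but it unwinds more cleanly if you pass to $f$, and you should build the threshold $\nu\le -1/\psi$ into the definition of $K(t)$ rather than try to invoke the scalar bound.
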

\begin{proof}
We want to apply the Maximum Principle for time--dependent sets theorem~\ref{time_MP}, hence we need to express condition~\eqref{HIeq} in terms of a family of closed, convex, invariant subsets of $S^2(\Lambda^2V^*)$, where $(V,h(t),D(t))$ is the usual bundle isomorphism of the tangent bundle defined via Uhlenbeck's trick (Section~\ref{uhlenbeck}). Moreover, by~\cite[Lemma~10.11]{chowbookII}, we already know that, for any $t\in[0,T)$, the set
\begin{equation*}
K_p(t)=\left\{
\begin{array}{l}
Q_p : \tr(Q_p)\geq-\frac{3}{1+2(1-6\rho)t}\text{ and if }\nu(Q_p)\leq-\frac{1}{1+2(1-6\rho)t}\\
\text{then } \tr(Q_p)\geq|\nu(Q_p)|\big(\log(|\nu(Q_p)|)+\log(1+2(1-6\rho)t)-3\big)
\end{array}
\right\}
\end{equation*} 
defines a closed invariant subset of $S^2(\Lambda^2V^*)$. Since, for $\rho\in[0,1/6)$, $K(t)$ depends continuously on time, the space--time track of $K(t)$ is closed in $S^2(\Lambda^2V^*)$.\\
Now we show that $K_p(t)$ is convex for every $p\in M$ and $t\in[0,T)$. Following~\cite[Lemma~9.5]{chknopf}, we consider the map
\begin{equation*}
\Phi: S^2(\Lambda^2V^*_p)\to\R^2\,,\qquad \Phi(Q_p)=(|\nu(Q_p)|,\tr(Q_p))
\end{equation*}
Clearly, we have that $Q_p\in K_p(t)$ if and only if $\Phi(Q_p)\in A(t)$, where
\begin{equation*}
A(t)=\left\{
\begin{array}{l}
(x,y)\in\R^2 : y\geq-\frac{3}{1+2(1-6\rho)t}\,;\quad y\geq -3x\,;\\
\text{if }x\geq\frac{1}{1+2(1-6\rho)t}\text{ then } y\geq x\big(\log x+\log(1+2(1-6\rho)t)-3\big)
\end{array}
\right\}
\end{equation*}
is a convex subset of $\R^2$. Then in order to show that $K_p(t)$ is convex is sufficient to show that the segment between any two algebraic operators in $K_p(t)$ is sent by the map $\Phi$ into $A(t)$.\\ Therefore let $Q_p,Q'_p\in K_p(t)$, $s\in[0,1]$ and $Q_p(s)=sQ_p+(1-s)Q'_p$. About the first defining condition for $A(t)$, the trace is a linear functional, hence it is obviously fulfilled by $Q_p(s)$, while the second condition is satisfied by any algebraic operator.\\
The third condition is a bit tricky. If $\nu(Q_p)$, $\nu(Q'_p)>-\frac{1}{1+(1-6\rho)t}$, then the condition is empty for every point of the segment because $\nu$ is a concave function. By continuity we can assume w.l.o.g. that $\nu(Q_p(s))\leq-\frac{1}{1+(1-6\rho)t}$, for every $s\in[0,1]$, hence $x(Q_p(s))=-\nu(Q_p(s))$ is a convex function and $x(Q_p(s))\leq s x(Q_p)+(1-s)x(Q'_p)$. On the other hand the second condition implies that $x(Q_p(s))\geq -y(Q_p(s))/3=-\frac{1}{3}(s y(Q_p)+(1-s)y(Q'_p))$. Then $\Phi(Q_p(s))$ belongs to the trapezium of vertices 
$$
\Phi(Q_p),\,\, \Big(-\frac{1}{3} y(Q_p), y(Q_p)\Big),\,\, \Phi(Q'_p),\,\, \Big(-\frac{1}{3} y(Q'_p),y(Q'_p)\Big)\,,
$$
which is contained in $A(t)$, as its vertices are and $A(t)$ is convex.\\
Now we prove that $K(t)$ is preserved by the system~\eqref{eq_eigen}. By taking the sum of the three equations in the system (see also Remark~\ref{eigen_proof}) we get
\begin{equation*}
\frac{d}{dt}\tr(Q_p)\geq \frac{4}{3}(1-3\rho)\tr(Q_p)^2\,.
\end{equation*}
By hypothesis, $\nu(Q_p)(0)\geq -1$, hence $\tr(Q_p)(0)\geq -3$ for every $p\in M$ and by integrating the previous inequality,
\begin{equation*}
\tr(Q_p)(t)\geq-\frac{3}{1+4(1-3\rho)t}\geq -\frac{3}{1+2(1-6\rho)t}\,,
\end{equation*}
which holds for any $\rho\in[0,1/6)$.\\
In order to prove that the second inequality is preserved too, we consider, for every $p\in M$ such that $\nu(Q_p)(0)<0$, the function
\begin{equation}
f(t)=\frac{\tr(Q_p)}{-\nu(Q_p)}-\log(-\nu(Q_p))-\log(1+2(1-6\rho)t)
\end{equation}
and we compute its derivative along the flow.
\begin{align*}
\frac{d}{dt}f=&\,\frac{1}{\nu^2}\big[(-2\nu)\big(\lambda^2+\mu^2+\nu^2+\lambda\mu+\lambda\nu+\mu\nu-2\rho(\lambda+\mu+\nu)^2\big)\\
&\,+2(\lambda+\mu+\nu)\big(\nu^2+\lambda\mu-2\rho\nu(\lambda+\mu+\nu)\big)\big]\\
&\,-\frac{2}{\nu}\left(\nu^2+\lambda\mu-2\rho\nu(\lambda+\mu+\nu)\right)-\frac{2(1-6\rho)}{1+2(1-6\rho)t}\\
=&\,\frac{2}{\nu^2}\big[-\nu(\lambda^2+\mu^2+\lambda\mu)+\lambda\mu(\lambda+\mu)-\nu^3+2\rho\nu^2(\lambda+\mu+\nu)\big]-\frac{2(1-6\rho)}{1+2(1-6\rho)t}
\end{align*}
As in the case of the Ricci flow, it is easy to see that the quantity $-\nu(\lambda^2+\mu^2+\lambda\mu)+\lambda\mu(\lambda+\mu)$ is always nonnegative if $\nu<0$. In fact, if $\mu>0$ it is obvious, whereas if $\mu\leq0$ one has
$$
-\nu(\lambda^2+\mu^2+\lambda\mu)+\lambda\mu(\lambda+\mu) = (\mu-\nu)(\lambda^2+\mu^2+\lambda\mu)-\mu^{3} \geq 0 \,.
$$
Hence we get
\begin{equation}\label{HIcon}
\frac{d}{dt}f(t)\geq-2\nu+4\rho(\lambda+\mu+\nu)-\frac{2(1-6\rho)}{1+2(1-6\rho)t}
\end{equation}
If $\rho\geq 0$, since $\lambda+\mu+\nu\geq 3\nu$, we obtain
\begin{equation*}
\frac{d}{dt}f\geq -2(1-6\rho)\big(\nu+\frac{1}{1+2(1-6\rho)t}\big)\geq 0
\end{equation*}
whenever $\nu\leq-\frac{1}{1+2(1-6\rho)t}$ and $\rho\leq 1/6$.\\
Hence, if $(\lambda,\mu,\nu)$ is a solution of system~\eqref{eq_eigen} in $[0,T)$ with $(\lambda(0),\mu(0),\nu(0))\in K_p(0)$, we suppose that there is $t_1>0$ such that $\nu(t_1)<-\frac{1}{1+2(1-6\rho)t_1}$. Then, either $\nu(t)<-\frac{1}{1+2(1-6\rho)t}$ for any $t\in[0,t_1]$, either there exists $t_0<t_1$ such that $\nu(t_0)=-\frac{1}{1+2(1-6\rho)t_0}$ and $\nu(t)<-\frac{1}{1+2(1-6\rho)t}$ for any $t\in(t_0,t_1]$. In the first case, by hypothesis we obtain $f(0)\geq -3$ and $\frac{d}{dt}f(t)\geq 0$ for any $t\in[0,t_1]$, therefore $f(t_1)\geq-3$; in the second case $f(t_0)=\frac{(\lambda+\mu+\nu)(t_0)}{-\nu(t_0)}\geq -3$ and $\frac{d}{dt}f(t)\geq 0$ for any $t\in[t_0,t_1]$, therefore again $f(t_1)\geq-3$, which is equivalent to the second inequality.
\end{proof}
\begin{rem}
The extra term $4\rho(\lambda+\mu+\nu)$ on the key-equation~\eqref{HIcon} requires strong assumptions on the parameter $\rho$ since we have no information on the sign of the trace. However, combining equation~\eqref{HIcon} with Proposition~\ref{ancient_posR}, we can enlarge the range of $\rho$ to $[0,1/4)$, simply by dropping the extra term, nonnegative for ancient solutions and therefore conclude that an ancient solution to the RB flow on a compact three--manifold with bounded scalar curvature has nonnegative sectional curvature for any value of $\rho\in[0,1/4)$ (see~\cite[Corollary~9.8]{chknopf}).
\end{rem}

\begin{prop}\label{ancient_possec}
Let $(M, g(t))_{\,t\in(-\infty,0]}$ be a compact, three--dimensional, ancient solution of the RB flow~\eqref{eqflow} with uniformly bounded scalar curvature. If $\rho\in[0,1/4)$ then the sectional curvature is nonnegative.
\end{prop}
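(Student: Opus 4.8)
The plan, following the Remark above, is to upgrade the Hamilton--Ivey estimate of Theorem~\ref{HIprop} to the full range $\rho\in[0,1/4)$ under the extra hypothesis $\RRR\ge0$, and then to combine that hypothesis --- which here is guaranteed by Proposition~\ref{ancient_posR} --- with a parabolic rescaling of the ancient solution. First I would reduce: since $n=3$ we have $\rho<\tfrac14=\tfrac1{2(n-1)}$, so Proposition~\ref{ancient_posR} yields either $\Ric\equiv0$ on $M\times(-\infty,0]$ --- whence $\WWW\equiv0$ (dimension three) forces $\Rm\equiv0$, so the sectional curvature vanishes identically and we are done --- or $\RRR>0$ everywhere, which I assume henceforth. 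Writing $\lambda\ge\mu\ge\nu$ for the eigenvalues of the Uhlenbeck pull--back operator $\M{P}$, this says that $\lambda+\mu+\nu$ is a fixed positive multiple of $\RRR$, hence strictly positive along the whole flow. I also recall that nonnegative sectional curvature is preserved, by Proposition~\ref{cond_3d}(ii).

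Next I would prove the improved pinching estimate: \emph{any} solution of the RB flow on a compact three--manifold with $\RRR\ge0$ throughout and with $\min_{p\in M}\nu_p(0)\ge-1$ satisfies, at every point where $\nu<0$,
\begin{equation*}
\RRR\ \ge\ |\nu|\big(\log|\nu|+\log(1+2t)-3\big),
\end{equation*}
that is, exactly~\eqref{HIeq} with the factor $1-6\rho$ replaced by $1$. I would get this by the argument of Theorem~\ref{HIprop} applied, via the maximum principle for time--dependent sets (Theorem~\ref{time_MP}), to the family
\begin{equation*}
K_p(t)=\Big\{Q_p:\ \tr(Q_p)\ge0,\ \text{ and if }\ \nu(Q_p)\le-\tfrac1{1+2t}\ \text{ then }\ \tr(Q_p)\ge|\nu(Q_p)|\big(\log|\nu(Q_p)|+\log(1+2t)-3\big)\Big\}.
\end{equation*}
Two points make this work for every $\rho\in[0,1/4)$. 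The half--space $\{\tr\ge0\}$ is itself preserved by the fibrewise ODE, because summing~\eqref{eq_eigen} gives $\tfrac{d}{dt}\tr(Q_p)\ge\tfrac43(1-3\rho)\tr(Q_p)^2\ge0$ for $\rho<1/3$. And in the key inequality~\eqref{HIcon} the term $4\rho(\lambda+\mu+\nu)=4\rho\,\tr(Q_p)$ is now \emph{nonnegative} and can simply be dropped, rather than estimated from below by $12\rho\nu$ (the step that confined Theorem~\ref{HIprop} to $\rho<1/6$), so that the analogue of the function $f$ used there satisfies $\tfrac{d}{dt}f\ge-2\nu-\tfrac2{1+2t}\ge0$ whenever $\nu\le-\tfrac1{1+2t}$. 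Convexity of $K_p(t)$ in the fibres would follow, as in Theorem~\ref{HIprop}, by pushing forward under $Q_p\mapsto\big(|\nu(Q_p)|,\tr(Q_p)\big)$ to the planar region $\{(x,y):x\ge0,\ y\ge\max(0,\,x(\log x+\log(1+2t)-3))\}$, which is convex; closedness of $K_p(t)$ and of its space--time track, and invariance under parallel translation, are as in the cited proofs.

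Finally I would argue by contradiction. Suppose $\nu_{x_0}(t_0)<0$ for some $(x_0,t_0)$; after a time translation assume $t_0=0$, and set $C_0:=\sup_{M\times(-\infty,0]}|\RRR|<\infty$, $\delta:=|\nu_{x_0}(g(0))|>0$. For each $s<0$ let $\sigma_s:=-\min_{p\in M}\nu_p(g(s))$: if $\sigma_s\le0$ then $g(s)$ has nonnegative sectional curvature, preserved up to time $0$ by Proposition~\ref{cond_3d}(ii), contradicting $\nu_{x_0}(0)<0$; hence $\sigma_s>0$. By the parabolic scale invariance of the RB flow, $\hat g_s(t):=\sigma_s\,g\big(s+\sigma_s^{-1}t\big)$, $t\in[0,-\sigma_s s]$, is again a solution, with $\hat\RRR>0$ throughout and $\min_{p}\nu_p(\hat g_s(0))=-1$, so the estimate above applies to $\hat g_s$. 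At the spacetime point of $\hat g_s$ corresponding to $(x_0,0)$, namely $t=\sigma_s|s|$, the least eigenvalue of the curvature operator equals $-\sigma_s^{-1}\delta<0$ and $\hat\RRR=\sigma_s^{-1}\RRR(x_0,0)\le\sigma_s^{-1}C_0$; the estimate then reads $\sigma_s^{-1}C_0\ge\sigma_s^{-1}\delta\big(\log(\sigma_s^{-1}\delta)+\log(1+2\sigma_s|s|)-3\big)$, and multiplying by $\sigma_s>0$ gives
\begin{equation*}
C_0\ \ge\ \delta(\log\delta-3)+\delta\,\log\!\Big(\tfrac1{\sigma_s}+2|s|\Big).
\end{equation*}
Since $\tfrac1{\sigma_s}+2|s|\ge2|s|\to+\infty$ as $s\to-\infty$, the right--hand side tends to $+\infty$, which is absurd; therefore $\nu\ge0$ everywhere, i.e. the sectional curvature is nonnegative. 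The rescaling step is robust --- it uses only $\RRR>0$, scale invariance, and the bound on $\RRR$, and crucially needs no control on the factors $\sigma_s$ --- so the hard part will be the improved estimate, i.e. re-running the convexity and ODE--invariance verifications of Theorem~\ref{HIprop}'s proof for the modified set $K_p(t)$ across the whole range $\rho\in[0,1/4)$; this should be routine once one observes that the role of the trace normalization there is now taken over by the \emph{positivity} of the trace, supplied for free by Proposition~\ref{ancient_posR}.
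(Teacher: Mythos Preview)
Your proposal is correct and follows essentially the same approach as the paper. The paper itself does not give a detailed proof of this proposition; the preceding Remark serves as the argument, saying exactly what you do: use Proposition~\ref{ancient_posR} to get $\RRR\ge0$, so that the extra term $4\rho(\lambda+\mu+\nu)$ in~\eqref{HIcon} becomes nonnegative and can be dropped, extending the Hamilton--Ivey estimate to all $\rho\in[0,1/4)$, and then conclude via the standard rescaling argument (for which the paper cites~\cite[Corollary~9.8]{chknopf}). You have simply spelled out that rescaling argument explicitly, which is a level-of-detail difference rather than a different route.
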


\section{Curvature estimates}
\subsection{Technical lemmas}
Before proving the curvature estimates for the RB flow, we need some
technical results.

First of all, we prove the proposition:
\begin{prop}\label{prop1}
Let $k \in \NN$, $p\in [1,+\infty]$ and $q\in [1,+\infty)$. There exists a constant
$C(n,k,p,q)$ such that for all $0\le j \le k$ and all tensor $T$
$$
\Vert \nabla^j T \Vert_{r_j} \le C \Vert T \Vert_p^{1-\frac{j}{k}} \Vert \nabla^k T
\Vert_{q}^{\frac{j}{k}}\,,
$$
where $\frac{1}{r_j}=\frac{1-\frac{j}{k}}{p} + \frac{\frac{j}{k}}{q}$.
\end{prop}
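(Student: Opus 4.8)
The plan is to establish this Gagliardo--Nirenberg type inequality by the classical integration-by-parts scheme: first prove the single second-order case $k=2$, $j=1$, and then obtain the general statement by induction on $k$, interpolating with H\"older's inequality at each step. Since $M$ is compact and without boundary, every integration by parts below is just the divergence theorem with no boundary contribution, and the only other tool is H\"older's inequality on $(M,d\mu_g)$; consequently the constant will depend on $n,k,p,q$ but \emph{not} on the metric.

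\textbf{Base case $k=2$, $j=1$.} For a tensor $S$ and an exponent $\alpha\ge\tfrac12$, I would apply the divergence theorem to the $1$--form $V_a=|\nabla S|^{2\alpha-2}\langle S,\nabla_aS\rangle$, which gives $\int_M|\nabla S|^{2\alpha}\,d\mu=-\int_M\langle S,\nabla_a(|\nabla S|^{2\alpha-2}\nabla_aS)\rangle\,d\mu$. Expanding the derivative and using the Kato inequality $|\nabla|\nabla S||\le|\nabla^2S|$ together with $|\Delta S|\le\sqrt n\,|\nabla^2S|$ for the rough Laplacian $\Delta=\tr_g\nabla^2$, the right-hand side is at most $(2\alpha-2+\sqrt n)\int_M|\nabla S|^{2\alpha-2}\,|\nabla^2S|\,|S|\,d\mu$. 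H\"older's inequality for the triple exponent $\bigl(\tfrac{\alpha}{\alpha-1},v,u\bigr)$, with $\tfrac1\alpha=\tfrac1u+\tfrac1v$, followed by division by $\bigl(\int_M|\nabla S|^{2\alpha}\bigr)^{(\alpha-1)/\alpha}$, yields $\|\nabla S\|_{2\alpha}^{2}\le C(\alpha,n)\|S\|_u\|\nabla^2S\|_v$ with $\tfrac{1}{2\alpha}=\tfrac12\bigl(\tfrac1u+\tfrac1v\bigr)$, which is exactly the Proposition for $k=2$, $j=1$ (taking $p=u$, $q=v$, $r_1=2\alpha$). To make this rigorous where $\nabla S$ vanishes, for $\alpha<1$, and to include the endpoint $p=\infty$, I would run the computation with $|\nabla S|$ replaced by $(|\nabla S|^2+\eps^2)^{1/2}$ and let $\eps\to0$.

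\textbf{Induction on $k$.} The cases $j=0$ and $j=k$ are trivial. Assume the Proposition for all orders $\le k-1$. For $j=1$: apply the base case to $S=T$ with $u=p$, obtaining $\|\nabla T\|_\tau^2\le C\|T\|_p\|\nabla^2T\|_v$, $\tfrac1\tau=\tfrac12(\tfrac1p+\tfrac1v)$; then apply the inductive hypothesis at order $k-1$ to the tensor $\nabla T$ with intermediate order $1$ to estimate $\|\nabla^2T\|_v\le C\|\nabla T\|_\tau^{(k-2)/(k-1)}\|\nabla^kT\|_q^{1/(k-1)}$, choosing its exponent again to be $\tau$; substituting and absorbing the power of $\|\nabla T\|_\tau$ into the left-hand side gives $\|\nabla T\|_\tau\le C\|T\|_p^{(k-1)/k}\|\nabla^kT\|_q^{1/k}$, and a short computation forces $\tfrac1\tau=\tfrac{k-1}{k}\tfrac1p+\tfrac1k\tfrac1q=\tfrac1{r_1}$. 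For $2\le j\le k-1$: apply the inductive hypothesis at order $k-1$ to $\nabla T$ with intermediate order $j-1$, getting $\|\nabla^jT\|_r\le C\|\nabla T\|_a^{(k-j)/(k-1)}\|\nabla^kT\|_q^{(j-1)/(k-1)}$, and bound $\|\nabla T\|_a$ by the $j=1$ case already proved at order $k$ (with $a$ the prescribed exponent); multiplying the two estimates, the exponent of $\|T\|_p$ collapses to $1-j/k$, that of $\|\nabla^kT\|_q$ to $j/k$, and the Lebesgue exponents compose to $\tfrac1r=\tfrac{1-j/k}{p}+\tfrac{j/k}{q}$, which closes the induction.

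\textbf{Main obstacle.} The only genuine content is the base case; everything afterwards is bookkeeping, and the care is needed precisely there: one must check that each auxiliary exponent $\tau,v,a$ generated in the induction lies in the admissible range $[1,\infty]$ (this is where the hypotheses $p,q\ge1$ enter) and that every threefold H\"older inequality is applied with a legitimate splitting $1=\tfrac1{e_1}+\tfrac1{e_2}+\tfrac1{e_3}$ with $e_i\ge1$; one must also arrange the $\eps$--regularization carefully enough to justify integrating by parts against the singular weight $|\nabla^iT|^{2\alpha-2}$ and to capture the endpoint $p=\infty$. None of this is deep, but it is where an error would hide.
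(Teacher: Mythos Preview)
Your proof is correct and shares its core with the paper's: the base case $\Vert\nabla T\Vert_r^2\le C\Vert T\Vert_p\Vert\nabla^2T\Vert_q$ via integration by parts is exactly the paper's first lemma (stated there only for $r\ge2$, so it sidesteps the $\eps$--regularization you mention), and then an iteration lifts this to general $j,k$. The only difference is how the iteration is organized. You run a direct induction on $k$, first settling $j=1$ by feeding the base case into the inductive hypothesis applied to $\nabla T$, and then bootstrapping to $2\le j\le k-1$. The paper instead invokes a multiplicative lemma of Hamilton: if $f(j)\le C\,f(j-1)^{1/2}f(j+1)^{1/2}$ for all $0<j<k$, then $f(j)\le C^{j(k-j)}f(0)^{1-j/k}f(k)^{j/k}$. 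Setting $f(j)=\Vert\nabla^jT\Vert_{r_j}$ and noting that $\tfrac{2}{r_j}=\tfrac{1}{r_{j-1}}+\tfrac{1}{r_{j+1}}$, the log--convexity hypothesis is precisely the base case applied to $\nabla^{j-1}T$, and the proposition follows at once. Hamilton's lemma is itself proved by induction, so the two arguments are equivalent in content; his packaging simply absorbs the exponent bookkeeping you flag as your ``main obstacle''.
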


To prove this proposition, we need several lemmas.

\begin{lemma}\label{lem1}
Let $p\in [1,+\infty]$, $q\in [1,+\infty)$ and $r\in [2,+\infty)$ such that
$\frac{2}{r}=\frac{1}{p}+\frac{1}{q}$. There exists a constant $C(n,r)$ such that for all
tensor $T$
$$
\Vert \nabla T \Vert_r^2 \le C \Vert T \Vert_p \Vert \nabla^2 T
\Vert_q\,.
$$
\end{lemma}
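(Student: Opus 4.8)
The plan is to establish the interpolation inequality $\Vert \nabla T \Vert_r^2 \le C \Vert T \Vert_p \Vert \nabla^2 T \Vert_q$ by the classical integration-by-parts argument that goes back to the Euclidean Gagliardo--Nirenberg inequalities, adapted to tensors on a Riemannian manifold. First I would reduce to the case where $T$ is compactly supported (which is automatic here since $M$ is compact) and observe that it suffices to control $\int |\nabla T|^r \, d\mu$. The key step is to integrate by parts: writing schematically
\begin{equation*}
\int |\nabla T|^r \, d\mu = \int |\nabla T|^{r-2} \langle \nabla T, \nabla T\rangle \, d\mu = -\int \operatorname{div}\!\big(|\nabla T|^{r-2}\nabla T\big) \cdot T \, d\mu\,,
\end{equation*}
so that after expanding the divergence one obtains terms of the form $\int |\nabla T|^{r-2} |\nabla^2 T| |T| \, d\mu$ together with terms $\int |\nabla T|^{r-4} |\nabla|\nabla T|^2|\, |\nabla T|\, |T|\, d\mu$; using $|\nabla |\nabla T|^2| \le 2 |\nabla T||\nabla^2 T|$ (Kato's inequality) all of these collapse to a bound
\begin{equation*}
\int |\nabla T|^r \, d\mu \le C \int |\nabla T|^{r-2} |\nabla^2 T| |T| \, d\mu\,.
\end{equation*}

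The second step is to apply Hölder's inequality to the right-hand side. Since $\frac{2}{r} = \frac1p + \frac1q$, the three exponents $\frac{r}{r-2}$, $q$, $p$ are conjugate in the sense that $\frac{r-2}{r} + \frac1q + \frac1p = \frac{r-2}{r} + \frac2r = 1$, so Hölder gives
\begin{equation*}
\int |\nabla T|^{r-2} |\nabla^2 T| |T| \, d\mu \le \Vert \nabla T\Vert_r^{r-2}\, \Vert \nabla^2 T\Vert_q\, \Vert T\Vert_p\,.
\end{equation*}
Combining with the previous inequality and dividing by $\Vert \nabla T\Vert_r^{r-2}$ (treating the degenerate case $\Vert\nabla T\Vert_r = 0$ trivially) yields $\Vert \nabla T\Vert_r^2 \le C \Vert T\Vert_p \Vert \nabla^2 T\Vert_q$, which is the claim. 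One should also handle the endpoint $p = +\infty$ separately but identically, pulling $\Vert T\Vert_\infty$ out of the integral directly.

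The main technical obstacle is not the analysis but the bookkeeping of the curvature terms generated when integrating by parts and commuting derivatives on a manifold: the divergence $\operatorname{div}(|\nabla T|^{r-2}\nabla T)$ produces, besides the ``flat'' terms above, contractions of $T$ against the Riemann tensor of $g$. Since we are on a fixed compact manifold these are bounded by $C(n,g)\,|\nabla T|^{r-2}|T|^2$ or similar, but they do not fit the scaling of the desired inequality. The standard remedy—which I would follow—is to note that the inequality as stated has a constant $C(n,r)$ that may depend on the geometry when curvature terms appear, or, more cleanly, to first prove the inequality on Euclidean balls via a partition of unity and a covering argument, where no curvature appears, and only afterwards patch the local estimates together; the lower-order error terms from the cutoff functions are then absorbed using Young's inequality. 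Another subtlety to check is the regularization when $r - 2 < 0$ is impossible here since $r \ge 2$, but when $r = 2$ the exponent $r-2 = 0$ degenerates and the argument reduces directly to Cauchy--Schwarz, giving the inequality immediately.
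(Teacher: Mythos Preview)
Your core argument---integrate by parts to obtain $\Vert\nabla T\Vert_r^r \le C\int_M |T|\,|\nabla^2 T|\,|\nabla T|^{r-2}\,d\mu_g$, then apply H\"older with exponents $p,\,q,\,\tfrac{r}{r-2}$---is exactly the paper's proof, and it is correct.

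However, your discussion of the ``main technical obstacle'' is a misconception worth clearing up. No curvature terms arise here. The integration by parts
\[
\int_M \langle \nabla T,\,|\nabla T|^{r-2}\nabla T\rangle\,d\mu_g \;=\; -\int_M \bigl\langle T,\,\nabla_i\bigl(|\nabla T|^{r-2}\nabla^i T\bigr)\bigr\rangle\,d\mu_g
\]
is just the divergence theorem on the compact manifold $M$, and when you expand $\nabla_i\bigl(|\nabla T|^{r-2}\nabla^i T\bigr)$ you get only $(r-2)|\nabla T|^{r-3}\nabla_i|\nabla T|\,\nabla^i T$ and $|\nabla T|^{r-2}\nabla_i\nabla^i T$. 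The second term is a trace of $\nabla^2 T$, hence bounded by $C(n)|\nabla^2 T|$; the first is handled by Kato as you say. At no point do you commute covariant derivatives, so the Riemann tensor never enters. Consequently the partition-of-unity localization you propose is unnecessary, and the constant genuinely depends only on $n$ and $r$ as stated---the paper's entire proof is five lines.
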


\begin{proof}
\begin{eqnarray*}
\Vert \nabla T \Vert_r^r &=&
\int_M \left<\nabla T, \vert\nabla T\vert^{r-2} \nabla T
\right>\,d\mu_g\\
&=& -\int_M \left< T,\nabla\left( \vert\nabla T\vert^{r-2} \nabla T
   \right) \right> \,d\mu_g\\
&=& -\int_M \left< T,(r-2)\nabla^2 T \vert\nabla T\vert^{r-3} \nabla T
\right>\,d\mu_g 
- \int_M \left< T,\vert\nabla T\vert^{r-2} \nabla^2 T \right>\,d\mu_g \\
&\le& C \int_M \vert T\vert \vert \nabla^2 T \vert \vert \nabla
T\vert^{r-2}\,d\mu_g\\
&\le& C \Vert T\Vert_p \Vert \nabla^2 T \Vert_q \Vert \nabla
T\Vert_r^{r-2}\,,
\end{eqnarray*}
using H\"older's inequality with $\frac{r-2}{r}+\frac{1}{p}+\frac{1}{q}=1$. This ends the proof of this lemma.
\end{proof}

\begin{lemma}[Hamilton~\cite{hamilton1}, Corollary~12.5]\label{lem3}
Let $k \in \NN$. If $f \, : \, \{ 0,\dots,k \} \rightarrow \RR$ satisfies for all $0<j<k$
$$
f(j) \le C f(j-1)^{\frac{1}{2}}f(j+1)^{\frac{1}{2}}\,,
$$
where $C$ is a positive constant, then for all $0\le j \le k$
$$
f(j) \le C^{j(k-j)} f(0)^{1-\frac{j}{k}}f(k)^{\frac{j}{k}}\,.
$$
\end{lemma}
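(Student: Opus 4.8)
The plan is to convert the multiplicative recursion into a statement about discrete convexity of $\log f$ (an alternative would be a direct induction on $k$, but the convexity route is the cleanest). Since $f$ takes nonnegative values, I first dispose of the case where some value vanishes: if $f(j_0)=0$, then substituting this into $f(j)\le Cf(j-1)^{1/2}f(j+1)^{1/2}$ and propagating the relation along the chain forces $f$ to vanish at every interior index $1\le j\le k-1$, and the asserted inequality is then trivially true at all $j$ (at interior $j$ the left side is $0$; at $j=0$ and $j=k$ it is an equality). So I may assume $f(j)>0$ for all $j$ and set $g(j)=\log f(j)$.

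With this notation the hypothesis reads $g(j)-\tfrac12\bigl(g(j-1)+g(j+1)\bigr)\le\log C$ for $0<j<k$. The key observation is that the parabola $\varphi(j):=j(k-j)$ has constant second difference, namely $\varphi(j+1)-2\varphi(j)+\varphi(j-1)=-2$, equivalently $\varphi(j)-\tfrac12\bigl(\varphi(j-1)+\varphi(j+1)\bigr)=1$, and it vanishes at $j=0$ and $j=k$. Hence the modified function $\widetilde g(j):=g(j)-\varphi(j)\log C$ satisfies
\[
\widetilde g(j)-\tfrac12\bigl(\widetilde g(j-1)+\widetilde g(j+1)\bigr)\le 0\qquad(0<j<k),
\]
i.e. $\widetilde g$ is discretely convex: its forward differences $\widetilde g(j+1)-\widetilde g(j)$ are nondecreasing in $j$. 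A function on $\{0,\dots,k\}$ with nondecreasing differences lies on or below the chord joining its endpoint values, which gives
\[
\widetilde g(j)\le\Bigl(1-\tfrac jk\Bigr)\widetilde g(0)+\tfrac jk\,\widetilde g(k)\qquad(0\le j\le k).
\]

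Finally, $\varphi(0)=\varphi(k)=0$ gives $\widetilde g(0)=g(0)$ and $\widetilde g(k)=g(k)$; substituting back the definition of $\widetilde g$ and rearranging yields
\[
g(j)\le\Bigl(1-\tfrac jk\Bigr)g(0)+\tfrac jk\,g(k)+j(k-j)\log C,
\]
and exponentiating gives exactly $f(j)\le C^{j(k-j)}f(0)^{1-j/k}f(k)^{j/k}$. I do not expect a genuine obstacle here; the only points deserving a sentence of care are the bookkeeping in the degenerate vanishing case and the elementary fact that a sequence with nondecreasing differences stays below its endpoint chord — which one checks by comparing the average of the first $j$ differences with the average of the last $k-j$ differences.
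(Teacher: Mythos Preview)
The paper does not supply its own proof of this lemma; it simply cites Hamilton's Corollary~12.5 and moves on. Your argument is correct and self-contained: subtracting the parabola $j(k-j)\log C$ converts the hypothesis into discrete convexity of $\widetilde g$, the chord inequality $\widetilde g(j)\le(1-\tfrac jk)\widetilde g(0)+\tfrac jk\,\widetilde g(k)$ follows, and exponentiating gives the claim. The degenerate case is fine once one accepts the implicit assumption $f\ge 0$, which is automatic in the intended application where $f(j)=\Vert\nabla^j T\Vert_{r_j}$.
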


\begin{proof}[Proof of Proposition~\ref{prop1}]
We apply Lemma~\ref{lem3} with $f(j)= \Vert \nabla^j T \Vert_{r_j}$. Since $\frac{2}{r_j} = \frac{1}{r_{j-1}} + \frac{1}{r_{j+1}}$,
Lemma~\ref{lem1} shows that there exists $C(n,k,p,q)$ such that
$$
f(j) \le C f(j-1)^{\frac{1}{2}}f(j+1)^{\frac{1}{2}}\,,
$$
and then Lemma~\ref{lem3} gives Proposition~\ref{prop1}, since $r_0=p$ a $r_k=q$.
\end{proof}

\begin{lemma}\label{lem4}
For all tensors of the form $S \ast T$, there exists $C$ depending on the dimension and the coefficients
in the expression such that
$$
\vert S \ast T \vert \le C \vert S\vert \vert T \vert\,.
$$
\end{lemma}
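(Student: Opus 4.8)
The plan is to reduce the inequality to repeated applications of the Cauchy--Schwarz inequality for the pointwise norms, after passing to a $g$--orthonormal frame. First I would recall what $S\ast T$ denotes: by definition it is a finite linear combination, with constant coefficients $c_\alpha$ determined only by the particular algebraic expression at hand, of tensors obtained by tensoring $S$ with $T$ and then contracting some collection of pairs of indices by means of the metric $g$ and its inverse. Since the estimate is pointwise, it suffices to prove it at an arbitrary point $p\in M$, and by the triangle inequality it is enough to bound a single contracted term $|c_\alpha\,C_\alpha(S\otimes T)|$ by a dimensional constant times $|S|\,|T|$; the constant in the statement is then $\big(\sum_\alpha|c_\alpha|\big)$ times that dimensional constant.

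Next I would fix a $g$--orthonormal basis $\{e_i\}_{i=1,\dots,n}$ of $T_pM$, so that every metric contraction becomes a plain sum of products of components over indices running in $\{1,\dots,n\}$, and the pointwise norms are the Euclidean norms of the component arrays, e.g. $|S|^2=\sum(S_{i_1\cdots i_a})^2$. A typical component of the contracted term then has the shape
\[
\big(C_\alpha(S\otimes T)\big)_{J}=\sum_{K}S_{A(J,K)}\,T_{B(J,K)}\,,
\]
where $J$ ranges over the free slots, $K$ over the contracted pairs, and $A,B$ distribute the indices to $S$ and to $T$. By Cauchy--Schwarz in the variable $K$,
\[
\big|\big(C_\alpha(S\otimes T)\big)_{J}\big|\le\Big(\sum_{K}S_{A(J,K)}^2\Big)^{1/2}\Big(\sum_{K}T_{B(J,K)}^2\Big)^{1/2}\le|S|\,|T|\,.
\]
Squaring, summing over the at most $n^{\#J}$ free multi-indices $J$, and taking square roots gives $|C_\alpha(S\otimes T)|\le n^{(\#J)/2}|S|\,|T|$; multiplying by $|c_\alpha|$ and summing over $\alpha$ finishes the proof, with $C$ depending only on $n$, on the number of terms, on the numbers of free and contracted slots, and on the coefficients $c_\alpha$ — exactly as asserted.

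There is no genuine obstacle here; the only point requiring a little care is the bookkeeping in the Cauchy--Schwarz step, namely checking that each component of $S$ (resp. of $T$) that occurs in the inner sum over $K$ is indeed counted among the components whose squares add up to $|S|^2$ (resp. $|T|^2$). This is automatic precisely because we work in an orthonormal frame, where raising and lowering indices does not change the numerical value of a component, so that contractions with $g^{ij}$ or $g_{ij}$ simply identify two index slots. One should also remark that the exponent $(\#J)/2$ and the coefficients $c_\alpha$ are read off from the algebraic form of $S\ast T$ alone, which is what makes $C$ independent of $S$, $T$, and the metric.
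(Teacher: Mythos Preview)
Your proof is correct and follows essentially the same approach as the paper's: both reduce to Cauchy--Schwarz on the contracted indices, the paper phrasing this as $\vert S\ast T\vert\le C(n)\,\vert S\otimes T\otimes g^{\otimes j}\otimes(g^{-1})^{\otimes k}\vert\le C(n)\,n^{(j+k)/2}\vert S\vert\,\vert T\vert$ via $\vert g\vert=\vert g^{-1}\vert=\sqrt{n}$, while you work directly in an orthonormal frame. One small caveat in your bookkeeping: the bound $\big(\sum_K S_{A(J,K)}^2\big)^{1/2}\le|S|$ can pick up an extra power of $n$ when some contracted pair lies entirely inside $T$ (so that $A(J,K)$ is independent of that component of $K$), but this only affects the exponent of $n$ in the final constant and not the validity of the lemma.
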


\begin{proof}
By Cauchy--Schwarz inequality, $(\tr_gT)^2=\left(g^{\alpha\beta}T_{\alpha\beta}\right)^2 \le n T_{\alpha\beta}T^{\alpha\beta}=n|T|^2$. Then
$$
\vert S \ast T \vert \le C(n) \vert S \otimes T \otimes g^{\otimes j}
\otimes (g^{-1})^{\otimes k} \le C(n) n^{\frac{j+k}{2}}\vert S\vert
\vert T \vert\,.
$$
\end{proof}

Let $k \in \NN$, and set, for a tensor $T$,
$\displaystyle F_g(T) = \sum_{j+l=k, \, j, l \ge 0} \nabla^j T \ast
\nabla^l T \ast \nabla^k T$.

\begin{lemma}\label{lem5}
Let $k\in \NN$. Let $p\in [2,+\infty]$ and $q \in [2,+\infty)$ such that $\frac{1}{p}+\frac{2}{q}=1$. There exists
$C(n,k,p,q,F)$ such that for all tensor $T$,
$$
\int_M \vert F_g(T)  \vert \,d\mu_g \le C \Vert T \Vert_p \Vert \nabla^kT
\Vert_q^2\,.
$$
\end{lemma}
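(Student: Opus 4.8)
The plan is to bound $F_g(T)$ pointwise and then split the resulting integral with H\"older's inequality into factors that are controlled by the interpolation estimate of Proposition~\ref{prop1}. First I would apply Lemma~\ref{lem4}: for each pair $(j,l)$ with $j,l\ge 0$ and $j+l=k$ one has, pointwise on $M$,
$$
\vert \nabla^j T \ast \nabla^l T \ast \nabla^k T\vert \,\le\, C\,\vert\nabla^j T\vert\,\vert\nabla^l T\vert\,\vert\nabla^k T\vert\,,
$$
with $C$ depending only on $n$, $k$ and the coefficients occurring in the $\ast$--expression. Since the sum defining $F_g(T)$ consists of only $k+1$ such terms, it suffices to estimate $\int_M \vert\nabla^j T\vert\,\vert\nabla^l T\vert\,\vert\nabla^k T\vert\,d\mu_g$ for each fixed admissible pair $(j,l)$.

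For such a pair set $\frac{1}{r_j}=\frac{1-j/k}{p}+\frac{j/k}{q}$ and $\frac{1}{r_l}=\frac{1-l/k}{p}+\frac{l/k}{q}$, exactly the exponents appearing in Proposition~\ref{prop1}. The decisive computation is that $(r_j,r_l,q)$ is a H\"older triple: since $j+l=k$ we have $\frac{j}{k}+\frac{l}{k}=1$ and $(1-\frac{j}{k})+(1-\frac{l}{k})=1$, hence
$$
\frac{1}{r_j}+\frac{1}{r_l}+\frac{1}{q}=\frac{1}{p}+\frac{1}{q}+\frac{1}{q}=\frac{1}{p}+\frac{2}{q}=1\,,
$$
where the last equality is precisely the hypothesis relating $p$ and $q$ (and $r_j,r_l\ge 2$ since $p,q\ge2$). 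H\"older's inequality with exponents $r_j,r_l,q$ then gives
$$
\int_M \vert\nabla^j T\vert\,\vert\nabla^l T\vert\,\vert\nabla^k T\vert\,d\mu_g \,\le\, \Vert\nabla^j T\Vert_{r_j}\,\Vert\nabla^l T\Vert_{r_l}\,\Vert\nabla^k T\Vert_q\,.
$$

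Finally I would invoke Proposition~\ref{prop1} on each of the first two factors (legitimate since $p\in[2,+\infty]\subset[1,+\infty]$, $q\in[2,+\infty)$, and the endpoint cases $j\in\{0,k\}$ are trivial, with $\nabla^0T=T$), obtaining $\Vert\nabla^j T\Vert_{r_j}\le C\Vert T\Vert_p^{1-j/k}\Vert\nabla^k T\Vert_q^{j/k}$ and the analogous bound for $l$. Multiplying the three factors and using once more $\frac{j}{k}+\frac{l}{k}=1$ collapses all exponents:
$$
\Vert\nabla^j T\Vert_{r_j}\,\Vert\nabla^l T\Vert_{r_l}\,\Vert\nabla^k T\Vert_q \,\le\, C\,\Vert T\Vert_p^{(1-j/k)+(1-l/k)}\,\Vert\nabla^k T\Vert_q^{(j/k)+(l/k)+1} \,=\, C\,\Vert T\Vert_p\,\Vert\nabla^k T\Vert_q^{2}\,.
$$
Summing over the $k+1$ pairs $(j,l)$ yields the assertion, with final constant $C(n,k,p,q,F)$. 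I do not expect a genuine obstacle here; the only points requiring care are the exponent bookkeeping --- in particular recognizing that the relation $\frac1p+\frac2q=1$ is exactly what makes $(r_j,r_l,q)$ conjugate --- and the handling of the degenerate case of Proposition~\ref{prop1} when $p=+\infty$ (which forces $q=2$).
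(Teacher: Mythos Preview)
Your proof is correct and follows essentially the same route as the paper: pointwise control via Lemma~\ref{lem4}, H\"older with exponents $(r_j,r_l,q)$ determined exactly as you define them, and then Proposition~\ref{prop1} on the first two factors so that the exponents collapse to $\Vert T\Vert_p\Vert\nabla^k T\Vert_q^2$. Your write-up is in fact slightly more explicit than the paper's in verifying the H\"older conjugacy $\frac{1}{r_j}+\frac{1}{r_l}+\frac{1}{q}=1$ and in handling the endpoint cases.
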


\begin{proof}
Let us consider one term in $F_g(T)$ that can be written $\nabla^j T \ast \nabla^l T \ast \nabla^k T$, $j,l \ge 0$. We set 
$$
\frac{1}{r_j} = \frac{1-\frac{j}{k}}{p} + \frac{\frac{j}{k}}{q}\,,\qquad \frac{1}{r_l} = \frac{1-\frac{l}{k}}{p} + \frac{\frac{l}{k}}{q}\,.
$$
Since $\frac{1}{r_j} + \frac{1}{r_j} + \frac{1}{q}=1$, by Lemma~\ref{lem4} and H\"older's inequality we have
\begin{eqnarray*}
\int_M \vert \nabla^j T \ast \nabla^l T \ast \nabla^k T \vert\,d\mu_g
 &\le& C' \int_M \vert \nabla^j T \vert \vert\nabla^l T \vert \vert \nabla^k T \vert\,d\mu_g \\
&\le& C' \Vert \nabla^j T \Vert_{r_j} \Vert\nabla^l T \Vert_{r_l} \Vert\nabla^k T \Vert_{q} \,,
\end{eqnarray*}
Then, by applying Proposition~\ref{prop1} to the first two factors, we get
$$
\int_M \vert \nabla^j T \ast \nabla^l T \ast \nabla^k T \vert\,d\mu_g
\le C \Vert T \Vert_p \Vert \nabla^k T \Vert_q^2\,.
$$
The result follows since $F_g(T)$ is a linear combination of such terms.
\end{proof}
\subsection{Curvature estimates}
\begin{teo}\label{teocurvest}
Assume $\rho < \frac{1}{2(n-1)}$.
If $g(t)$ is a compact solution of the RB flow for $t\in [0,T)$ such that
$$
\sup_{(x,t) \in M\times[0,T)} \vert \Rm (x,t)\vert \le K\,,
$$
then for all $k\in \NN$ there exists a constant $C(n,\rho,k,K,T)$ such that for all  $t\in (0,T]$
$$
\Vert \nabla^k \Rm_{g(t)} \Vert_2^2 \le \frac{C}{t^k} \sup_{t
  \in [0,T)} \Vert \Rm_{g(t)} \Vert_2^2\,.
$$
\end{teo}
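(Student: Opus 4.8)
The plan is to establish, for every $k\ge 0$, a differential inequality
\begin{equation*}
\frac{d}{dt}\,\mathcal F_k(t)\;\le\;-\,c_*\,\mathcal F_{k+1}(t)\;+\;C_k\,\mathcal F_k(t)\,,\qquad
\mathcal F_k(t):=\Vert\nabla^k\Rm_{g(t)}\Vert_2^2+\Lambda\,\Vert\nabla^k\RRR_{g(t)}\Vert_2^2\,,
\end{equation*}
with $c_*=c_*(n,\rho)>0$, $C_k=C_k(n,\rho,k,K)$ and a large fixed constant $\Lambda=\Lambda(n,\rho)\ge 1$, and then to deduce the estimate by the usual Bando--Shi iteration on $k$. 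Since $\Vert\nabla^k\RRR\Vert_2^2\le n^2\Vert\nabla^k\Rm\Vert_2^2$, $\mathcal F_k$ is comparable to $\Vert\nabla^k\Rm\Vert_2^2$, so estimating $\mathcal F_k$ suffices. The bound $|\Rm|\le K$ on $[0,T)$ forces $|\Ric|,|\RRR|\le CK$ and $|\partial_t g|\le CK$, so the metrics $g(t)$ stay uniformly equivalent on every $[0,T']$, $T'<T$, and all volumes and interpolation constants are controlled; the solution being smooth on $(0,T)$, each $\mathcal F_k$ is finite there and the argument below never uses values at $t=0$. The tools are the evolution equations of Proposition~\ref{eqrmprop}, in particular \eqref{eqrm} and \eqref{eqsc}, and the interpolation results Proposition~\ref{prop1} and Lemma~\ref{lem5}.

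To obtain the inequality, differentiate \eqref{eqrm} $k$ times and commute $\partial_t$ past $\nabla^k$ using the evolution of $\Gamma$ computed just before Proposition~\ref{eqrmprop}:
\begin{equation*}
\partial_t\nabla^k\Rm=\Delta\nabla^k\Rm-\rho\,(\nabla^{k+2}\RRR)\varowedge g+\sum_{i+j=k}\nabla^i\Rm\ast\nabla^j\Rm+(\text{terms with}\le k\ \text{curvature derivatives})\,.
\end{equation*}
Computing $\tfrac{d}{dt}\Vert\nabla^k\Rm\Vert_2^2$ from this (the time dependence of $d\mu$ and of the metric in the norm being harmless since $|\Ric|,|\RRR|\le CK$): the Laplacian gives $-2\Vert\nabla^{k+1}\Rm\Vert_2^2$; the term $-\rho(\nabla^{k+2}\RRR)\varowedge g$, after one integration by parts and the contracted Bianchi identity $\nabla^a\RRR_{ab}=\tfrac12\nabla_b\RRR$, gives $+4\rho\,\Vert\nabla^{k+1}\RRR\Vert_2^2$ plus commutators of lower order; the quadratic term is precisely $\int_M F_g(\Rm)$, hence bounded by Lemma~\ref{lem5} (with $p=\infty$, $q=2$) by $CK\Vert\nabla^k\Rm\Vert_2^2$; and the commutators $[\partial_t,\nabla^k]\Rm$, which by the shape of $\partial_t\Gamma$ are again of the form $\nabla^i\Rm\ast\nabla^j\Rm$ with $i+j=k$, are treated the same way. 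All lower-order terms are then absorbed into $\varepsilon\Vert\nabla^{k+1}\Rm\Vert_2^2+C_k(\Vert\nabla^k\Rm\Vert_2^2+\Vert\Rm\Vert_2^2)$ by H\"older's inequality together with Propositions~\ref{prop1} and Lemma~\ref{lem5}. Carrying out the same for \eqref{eqsc} and writing $c_\rho:=1-2(n-1)\rho$, which is \emph{positive exactly because} $\rho<\tfrac1{2(n-1)}$, one gets $\tfrac{d}{dt}\Vert\nabla^k\RRR\Vert_2^2\le-2c_\rho\Vert\nabla^{k+1}\RRR\Vert_2^2+C_k\mathcal F_k$, the forcing $\nabla^k(2|\Ric|^2-2\rho\RRR^2)$ having no $(k+1)$-st order part. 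Hence the $(k+1)$-st order part of $\tfrac{d}{dt}\mathcal F_k$ is $-2\Vert\nabla^{k+1}\Rm\Vert_2^2-(2\Lambda c_\rho-4\rho)\Vert\nabla^{k+1}\RRR\Vert_2^2$, and choosing $\Lambda:=\max\{1,(2\rho+1)/c_\rho\}$ makes $2\Lambda c_\rho-4\rho\ge 2$, so this part is $\le-c_*\mathcal F_{k+1}$ with $c_*=2/\Lambda$.

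Granted these inequalities, I would prove by induction on $k$ that $\mathcal F_k(t)\le C_k\,t^{-k}$ on $(0,T']$, with constants linear in $\mathcal A:=\sup_{[0,T)}\Vert\Rm\Vert_2^2$; the base case is $\mathcal F_0\le(1+\Lambda n^2)\mathcal A$. In the inductive step, integrate $\tfrac{d}{dt}\mathcal F_k\le-c_*\mathcal F_{k+1}+C_k\mathcal F_k$ over $[t/2,t]$ to get $c_*\int_{t/2}^t\mathcal F_{k+1}\,ds\le\mathcal F_k(t/2)+C_k\int_{t/2}^t\mathcal F_k\,ds\le C\,t^{-k}$; choose $s^\ast\in[t/2,t]$ with $\mathcal F_{k+1}(s^\ast)\le C\,t^{-(k+1)}$ by the mean value theorem, and integrate $\tfrac{d}{dt}\mathcal F_{k+1}\le C_{k+1}\mathcal F_{k+1}$ forward from $s^\ast$ to $t$ to conclude $\mathcal F_{k+1}(t)\le C\,e^{C_{k+1}T'}\,t^{-(k+1)}$. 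Since $\Vert\nabla^k\Rm\Vert_2^2\le\mathcal F_k$ and $\mathcal F_0\le(1+\Lambda n^2)\Vert\Rm\Vert_2^2$, this yields $\Vert\nabla^k\Rm_{g(t)}\Vert_2^2\le C(n,\rho,k,K,T)\,t^{-k}\,\sup_{[0,T)}\Vert\Rm_{g(t)}\Vert_2^2$.

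The hard part — and the only place the hypothesis $\rho<\tfrac1{2(n-1)}$ is really needed — is the coercivity of the second-order part: for $0<\rho<\tfrac1{2(n-1)}$ the term $+4\rho\Vert\nabla^{k+1}\RRR\Vert_2^2$ has the ``wrong'' sign, and an energy estimate for $\Vert\nabla^k\Rm\Vert_2^2$ alone does not close (one cannot bound $\Vert\nabla^{k+1}\RRR\Vert_2^2$ by a sufficiently small multiple of $\Vert\nabla^{k+1}\Rm\Vert_2^2$ over the whole range of $\rho$). It is precisely the strict parabolicity of \eqref{eqsc}, available when $c_\rho>0$, that provides — through the weight $\Lambda\Vert\nabla^k\RRR\Vert_2^2$ — the extra good term $-2\Lambda c_\rho\Vert\nabla^{k+1}\RRR\Vert_2^2$ absorbing it. Equivalently, after the Uhlenbeck trick one can run the same scheme on the operator $\M{L}$ of Proposition~\ref{op_nonneg}, whose uniform ellipticity for $\rho<\tfrac1{2(n-1)}$ gives this coercivity through a G\aa rding inequality. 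A minor technical point is that the induction must be self-contained near $t=0$: the half-interval/mean-value device above never refers to $\mathcal F_k$ at the initial time, exactly as for the Ricci flow.
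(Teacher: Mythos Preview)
Your proof is correct and follows the same approach as the paper: both augment $\Vert\nabla^k\Rm\Vert_2^2$ by a multiple of $\Vert\nabla^k\RRR\Vert_2^2$ (the paper takes $\Lambda=\tfrac{4|\rho|}{1-2(n-1)\rho}$) so that the strictly parabolic scalar curvature equation supplies the extra $-2\Lambda c_\rho\Vert\nabla^{k+1}\RRR\Vert_2^2$ needed to absorb the bad $+4\rho\Vert\nabla^{k+1}\RRR\Vert_2^2$, and both control the cubic reaction terms via Lemma~\ref{lem5} and Proposition~\ref{prop1}. The only cosmetic difference is in the final step: the paper packages everything into the Bando--type polynomial $f_k(t)=\sum_{j\le k}\beta^j t^j\mathcal A_j/j!$ and shows $f_k'\le C\Vert\Rm\Vert_2^2$ directly, whereas you run the half--interval/mean--value iteration in $k$; both are standard and give the same $t^{-k}$ decay.
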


\begin{proof}
A direct computation gives
\begin{eqnarray*}
\dt\vert \Rm \vert^2 &=&
\Delta (\vert \Rm \vert^2) - 2 \vert\nabla \Rm\vert^2 -8\rho \RRR_{ij}\nabla^i\nabla^j \RRR + \Rm \ast \Rm \ast \Rm \\
\dt \RRR^2 &=&
(1-2(n-1)\rho)\Delta(\RRR^2) - 2(1-2(n-1)\rho) \vert\nabla \RRR\vert^2 +4\RRR \vert \Ric\vert^2 - 4\rho \RRR^3\,.
\end{eqnarray*}
It follows that
\begin{eqnarray*}
\frac{d}{dt}\int_M\vert \Rm \vert^2\,d\mu_g &=&
 - 2 \int_M\vert\nabla \Rm\vert^2\,d\mu_g
 -8\rho \int_M \RRR_{ij}\nabla^i\nabla^j \RRR\,d\mu_g\\
&& + \int_M \Rm \ast \Rm \ast \Rm \,d\mu_g\\
\frac{d}{dt} \int_M \RRR^2\,d\mu_g &=&
- 2(1-2(n-1)\rho) \int_M \vert\nabla \RRR\vert^2\,d\mu_g
 +\int_M \Rm \ast \Rm \ast \Rm\,d\mu_g\,.
\end{eqnarray*}
Now we want to compute $\int_M \RRR_{ij}\nabla^i\nabla^j \RRR\,d\mu_g$. Using Bianchi identity we have:
$$
\int_M \RRR_{ij}\nabla^i\nabla^j \RRR\,d\mu_g=-\frac{1}{2} \int_M \vert\nabla
\RRR\vert^2\,d\mu_g\,.
$$
We conclude that
\begin{eqnarray*}
\frac{d}{dt}\int_M\vert \Rm \vert^2\,d\mu_g &=&
 - 2 \int_M\vert\nabla \Rm\vert^2\,d\mu_g +4\rho \int_M \vert\nabla \RRR\vert^2\,d\mu_g\\
 && + \int_M \Rm \ast \Rm \ast \Rm \,d\mu_g\\
\frac{d}{dt} \int_M \RRR^2\,d\mu_g &=&
- 2(1-2(n-1)\rho) \int_M \vert\nabla \RRR\vert^2\,d\mu_g+\int_M \Rm \ast \Rm \ast \Rm\,d\mu_g\,.
\end{eqnarray*}
As we did before, a straightforward computation gives:
\begin{eqnarray*}
\frac{d}{dt}\int_M\vert \nabla^k \Rm \vert^2\,d\mu_g &=&
 - 2 \int_M\vert\nabla^{k+1} \Rm\vert^2\,d\mu_g +4\rho \int_M \vert\nabla^{k+1} \RRR\vert^2\,d\mu_g \\
 &&+ \sum_{j+l=k, \, j,l\ge 0}\int_M \nabla^{j}\Rm \ast \nabla^{l}\Rm \ast \nabla^{k}\Rm\,d\mu_g \\
\frac{d}{dt} \int_M \vert \nabla^k \RRR \vert^2\,d\mu_g &=&
- 2(1-2(n-1)\rho) \int_M \vert\nabla^{k+1} \RRR\vert^2\,d\mu_g \\
&&+ \sum_{j+l=k, \, j,l\ge 0}\int_M \nabla^{j}\Rm \ast \nabla^{l}\Rm \ast \nabla^{k}\Rm\,d\mu_g\,.
\end{eqnarray*}

Consider 
$${\mathcal A}_k := \int_M\vert\nabla^k \Rm\vert^2\,d\mu_g + \frac{4\vert \rho\vert}{(1-2(n-1)\rho)}\int_M \vert\nabla^k \RRR\vert^2\,d\mu_g$$
and set $f_k(t):= \displaystyle\sum_{j=0}^k \frac{\beta^j t^j}{j!}{\mathcal A}_j$, where $\beta :=\min (1, 1-2(n-1)\rho)$. 
We have
\begin{equation}\label{der_fk}
f'_k(t) = \sum_{j=0}^{k-1} \frac{\beta^j t^j}{j!}\left({\mathcal A}'_j + \beta{\mathcal A}_{j+1}\right)
+ \frac{\beta^k t^k}{k!} {\mathcal A}'_k 
\end{equation}
We have by a direct computation, for any $j$:
\begin{eqnarray*}
{\mathcal A}'_j + \beta {\mathcal A}_{j+1} &= & (-2+\beta) \Vert\nabla^{j+1}\Rm\Vert_2^2 + \Big(4\rho-8|\rho|+ \frac{4\beta|\rho|}{1-2(n-1)\rho}\Big)\Vert\nabla^{j+1}\RRR\Vert_2^2\\
&& +\sum_{i+l=j, \, i,l\ge 0}\int_M \nabla^{i}\Rm \ast \nabla^{l}\Rm \ast \nabla^{j}\Rm\,d\mu_g\,.
\end{eqnarray*}
We need to estimate $\sum_{i+l=j, \, i,l\ge 0}\int_M \nabla^{i}\Rm
\ast \nabla^{l}\Rm \ast \nabla^{j}\Rm\,d\mu_g$. For this we use
Lemma~\ref{lem5} with $p=+\infty$ and $q=2$:
$$
\sum_{i+l=j, \, i,l\ge 0}\int_M \nabla^{i}\Rm \ast \nabla^{l}\Rm \ast \nabla^{j}\Rm\,d\mu_g \le
C \Vert \Rm \Vert_\infty \Vert \nabla^{j}\Rm \Vert_2^2\,.
$$
Using Proposition~\ref{prop1}, with $k=j+1$ we get
\begin{equation*}
\sum_{i+l=k, \, i,l\ge 0}\int_M \nabla^{i}\Rm \ast \nabla^{l}\Rm \ast \nabla^{j}\Rm\,d\mu_g \leq \,
C \Vert \Rm \Vert_\infty (\Vert \Rm \Vert_2^2)^{\frac{1}{j+1}}(\Vert\nabla^{j+1} \Rm \Vert_{2}^2)^{\frac{j}{j+1}}\,.
\end{equation*}
Now we apply Young's inequality $ab\leq \frac{a^p}{p}+\frac{b^q}{q}$, where
$$
a=C\Vert \Rm \Vert_\infty (\Vert \Rm \Vert_2^2)^{\frac{1}{j+1}}\,,\quad b=(\Vert\nabla^{j+1} \Rm \Vert_{2}^2)^{\frac{j}{j+1}}
$$
and $p=j+1$, $q=\frac{j+1}{j}$, we use the hypothesis on the boundedness of $\Vert \Rm \Vert_\infty$ and we obtain
\begin{equation*}
\sum_{i+l=j, \, i,l\ge 0}\int_M \nabla^{i}\Rm \ast \nabla^{l}\Rm \ast \nabla^{j}\Rm\,d\mu_g \leq \,
C'(n,\rho,j,K)\Vert \Rm \Vert_2^2+ \Vert \nabla^{j+1}\Rm \Vert_2^2\,.
\end{equation*}
Putting this last inequality in the previous computation, we obtain
\begin{eqnarray*}
{\mathcal A}'_j + \beta {\mathcal A}_{j+1} & \leq & (-1+\beta) \Vert\nabla^{j+1}\Rm\Vert_2^2 + \Big(4\rho-8|\rho|+ \frac{4\beta|\rho|}{1-2(n-1)\rho}\Big)\Vert\nabla^{j+1}\RRR\Vert_2^2\\
&& + C'(n,\rho,j,K)\Vert \Rm \Vert_2^2\\
& \leq & C'(n,\rho,j,K)\Vert \Rm \Vert_2^2\,,
\end{eqnarray*}
where we use the facts that $-1+\beta \le 0$ and $4\rho -8\vert\rho\vert +  \frac{4\vert \rho\vert \beta}{(1-2(n-1)\rho)} \le 0$.
The same estimates holds for the last term in equation~\eqref{der_fk}, since
$$
\M{A}'_k\leq \M{A}'_k+\beta \M{A}_{k+1}\leq C'(n,\rho, k, K) \Vert \Rm \Vert_2^2
$$
Therefore
\begin{eqnarray*}
f'_k(t) &\leq & \sum_{j=0}^k \frac{\beta^j t^j}{j!} C'(n,\rho,j,K)\Vert \Rm \Vert_2^2\\
& \leq & \overline{C}(n,\rho,k, K) \Vert \Rm \Vert_2^2 (e^{\beta t}-1)\\
&\leq & \widetilde{C}(n,\rho,k,K,T)\Vert \Rm \Vert_2^2\,.
\end{eqnarray*}
Since $f_k(0)=\M{A}_0\leq C(\rho, n)\Vert \Rm \Vert_2^2 $, by integrating the previous inequality we finally get
\begin{align*}
\Vert \nabla^k\Rm\Vert_2^2 &\,\le {\mathcal A}_k \le \frac{k!}{\beta^k t^k}
f_k(t)  
\le \frac{\widehat{C}}{t^k}\Bigl[f_k(0)+\widetilde{C}t\Vert \Rm \Vert_2^2\Bigl]\\
&\,\le\frac{\widehat{C}\big[C(\rho,n)+\widetilde{C}t\big]}{t^k} \Vert \Rm \Vert_2^2
\le \frac{C}{t^k} \Vert \Rm \Vert_{2}^2\,,
\end{align*}
which concludes the proof of the theorem.
\end{proof}

\subsection{Long time existence}\ \\
In this section we will prove the following result.

\begin{teo}\label{longtime} Assume $\rho < \frac{1}{2(n-1)}$.
If $g(t)$ is a compact solution of the RB flow on a maximal time interval $[0,T)$, $T<+\infty$, then
$$
\limsup_{t\rightarrow T} \max_{M} |\mathrm{\Rm}(\,\cdot\,,t)| \,=\, +\infty\,.
$$
\end{teo}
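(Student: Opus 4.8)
The strategy is the standard one: I argue by contradiction, assuming that $\limsup_{t\to T}\max_{M}|\Rm(\,\cdot\,,t)|<+\infty$, and I produce a smooth solution of the RB flow on an interval $[0,T+\eps)$ with $\eps>0$, contradicting the maximality of $T$. Since $\Rm$ is continuous on the compact set $M\times[0,T-\delta]$ for any $\delta\in(0,T)$ and the $\limsup$ is finite, there is a constant $K$ with $\sup_{M\times[0,T)}|\Rm|\le K$.

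The first step is to show that all the metrics $g(t)$, $t\in[0,T)$, are uniformly equivalent. Since $\Ric$ and $\RRR$ are obtained from $\Rm$ by metric contractions, one has $|\Ric-\rho\RRR g|_{g(t)}\le C(n,\rho)\,|\Rm_{g(t)}|\le C(n,\rho)K$, so the flow equation~\eqref{eqflow} gives $|\partial_{t}g|_{g(t)}\le 2C(n,\rho)K$ and, integrating in time, $e^{-2C(n,\rho)Kt}\,g_{0}\le g(t)\le e^{2C(n,\rho)Kt}\,g_{0}$ for every $t\in[0,T)$; as $T<+\infty$ these are uniform two--sided bounds. In the same way, from $\frac{d\mu}{dt}=(n\rho-1)\RRR\,\mu$ and $|\RRR|\le C(n)K$ one gets that $\mathrm{Vol}(M,g(t))$ is bounded above and below by positive constants on $[0,T)$, so that $\sup_{t\in[0,T)}\Vert\Rm_{g(t)}\Vert_{2}^{2}\le K^{2}\sup_{t}\mathrm{Vol}(M,g(t))<+\infty$.

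Next I would invoke the curvature estimates of Theorem~\ref{teocurvest}, which applies since $\rho<\tfrac{1}{2(n-1)}$ is assumed here: for every $k\in\NN$ and every $t\in[T/2,T)$ it gives $\Vert\nabla^{k}\Rm_{g(t)}\Vert_{2}^{2}\le \frac{C(n,\rho,k,K,T)}{(T/2)^{k}}\sup_{t}\Vert\Rm_{g(t)}\Vert_{2}^{2}\le C_{k}$, a bound uniform in $t$. Combining these $L^{2}$ bounds with the uniform equivalence of the metrics and the Sobolev embedding theorem on the compact manifold $M$ --- controlling along the way the $g_{0}$--covariant derivatives of $g(t)$, equivalently the difference of the Christoffel symbols of $g(t)$ and of $g_{0}$, which evolve by linear ODEs driven by $\nabla\Ric$ and $\nabla\RRR$ (recall the evolution equation for $\partial_{t}\Gamma^{i}_{jk}$ computed in Section~3) and hence stay bounded on $[0,T)$ --- one obtains uniform pointwise bounds $\sup_{M\times[T/2,T)}|\nabla^{k}\Rm_{g(t)}|\le C_{k}'$ for all $k$. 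From here a by--now standard argument, identical to the one for the Ricci flow (see~\cite{chknopf}), shows that $g(t)$ converges, as $t\to T$, in the $C^{\infty}$ topology to a smooth Riemannian metric $g(T)$ on $M$, again uniformly equivalent to $g_{0}$.

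To conclude, I would apply the short time existence Theorem~\ref{short time} with initial datum $(M,g(T))$ (once more using $\rho<\tfrac{1}{2(n-1)}$): there exist $\eps>0$ and a unique smooth solution $\widetilde g(s)$, $s\in[0,\eps)$, of the RB flow with $\widetilde g(0)=g(T)$. Setting $\bar g(t)=g(t)$ on $[0,T)$ and $\bar g(t)=\widetilde g(t-T)$ on $[T,T+\eps)$ produces a solution of~\eqref{eqflow} on $[0,T+\eps)$ which is smooth also across $t=T$, since every time derivative of $\bar g$ at $t=T$ is determined through the evolution equation by the spatial derivatives of the metric, which are continuous in time by the $C^{\infty}$ convergence just established (equivalently, one may invoke the uniqueness statement in Theorem~\ref{short time}). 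This contradicts the maximality of the interval $[0,T)$ and proves the theorem. The main technical point is the passage from the $L^{2}$ bounds on the covariant derivatives of $\Rm$ to the $C^{\infty}$ convergence of the metrics: one has to control the time dependence of the metric in the Sobolev inequalities and to check that the limiting metric is genuinely smooth, not merely of class $C^{k}$ for each finite $k$.
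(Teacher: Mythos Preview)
Your overall strategy matches the paper's: assume a uniform curvature bound, pass via Theorem~\ref{teocurvest} to $L^{2}$ bounds on all covariant derivatives of $\Rm$, upgrade to pointwise bounds, obtain $C^{\infty}$ convergence of the metrics to $g(T)$, and restart the flow via Theorem~\ref{short time}. The metric equivalence and volume bound you give are also the same ingredients the paper uses (citing Hamilton's Lemma~14.2).

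The gap is in your passage from $L^{2}$ to $L^{\infty}$ bounds. You propose to control the $g_{0}$--Christoffel difference through its evolution ODE, which is ``driven by $\nabla\Ric$ and $\nabla\RRR$,'' concluding that it ``hence stay[s] bounded on $[0,T)$.'' But at this stage you only have $L^{2}$ bounds on $\nabla\Ric$, $\nabla\RRR$; you do not yet have the pointwise bounds needed to integrate that ODE. So the argument is circular as written: you need sup bounds on $\nabla\Rm$ to control the Christoffel difference, and you need the Christoffel difference to transfer Sobolev constants and obtain sup bounds on $\nabla\Rm$.

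The paper breaks this circularity by never comparing to $g_{0}$ at this stage. Instead, from $\Vert\nabla^{k}\Rm\Vert_{2}\le C_{k}$ and $\Vert\Rm\Vert_{\infty}\le K$ it uses the interpolation inequalities (Proposition~\ref{prop1} with $p=\infty$, $q=2$) to get $\Vert\nabla^{j}\Rm\Vert_{r}\le C_{j,r}$ for all finite $r$, then applies the $W^{1,r}\hookrightarrow C^{0}$ Sobolev inequality to the \emph{scalar} $E_{j}=|\nabla^{j}\Rm|^{2}$, with respect to $g(t)$. Since only the metric (not its derivatives) enters the first--order Sobolev constant for a scalar, and the metrics are uniformly equivalent, the constant is uniform in $t$. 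This gives $\max_{M}|\nabla^{j}\Rm|\le C_{j}$ directly. If you wish to keep your route, you must set up an honest induction: first get $|\nabla\Rm|$ bounded pointwise (e.g.\ by the scalar Sobolev trick above), then integrate the $\Gamma$--ODE, and iterate.
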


\begin{proof} This proof follows exactly the one given by Hamilton for the Ricci flow (see~\cite[Section 14]{hamilton1}. First of all we observe that, if the Riemann tensor is uniformly bounded as $t\to T$ and $T<+\infty$, then also its $L^2$--norm is uniformly bounded, because from the previous computations, for $\M{A}_0=\Vert \Rm \Vert_2^2 + \frac{4\vert \rho \vert}{(1-2(n-1)\rho)}\Vert\RRR\Vert_2^2$ we have $\M{A}'_0\leq C \M{A}_0$.\\
Then, by Theorem~\ref{teocurvest}, we get, for any $j\in \NN$
$$
\Vert \nabla^j\Rm\Vert_2^2\leq C_j\,.
$$
Now, by using the interpolation inequalities in Proposition~\ref{prop1} with $p=\infty$, $q=2$, we immediately get the estimates
$$
\Vert \nabla^{j} \Rm\Vert_{\frac{2k}{j}} \,\leq\, C_{j,k}\,,
$$
for all $j\in\NN$ and $k\geq j$. Therefore, by interpolation the same result holds for a generic exponent $r$, with the constant that depends on $j$ and $r$.\\
Now, let $E_{j}:=|\nabla^{j}\Rm|^{2}$. Then, for all $r<+\infty$ we have
$$
\int_{M}\big(|E_{j}|^{r}+|\nabla E_{j}|^{r}\big)\,d\mu_g  \,\leq \, C_{j,r}'\,.
$$
Thus, by Sobolev inequality, if $r>j$, one has
$$
\max_{M}|E_{j}|^{r} \,\leq\, C_{t} \int_{M} \big(|E_{j}|^{r}+|\nabla E_{j}|^{r}\big)\,d\mu_g \,.
$$
Notice that the constant $C_{t}$ depends on the metric $g(t)$, but it
does not depend on the derivatives of $g(t)$. Moreover,
from~\cite[Lemma~14.2]{hamilton1}, it follows that the metrics are all
equivalent. Hence, the constant $C_{t}$ is uniformly bounded as
$t\rightarrow T$ and, from the previous estimates, it follows that, if
$|\Rm|\leq C$ on $M\times[0,T)]$, for every $j\in\NN$ one has
$$
\max_{M} |\nabla^{j} \Rm| \,\leq \, C_{j}\,,
$$
where the constant $C_{j}$ depends only on the initial value of the
metric and the constant $C$.

Arguing now as in~\cite[Section~14]{hamilton1}, it follows that the metrics $g(t)$ converge to some limit metric $g(T)$ in the $C^{\infty}$ topology (with all their time/space ordinary partial derivatives, once written in local coordinates), hence, we can restart the flow with this initial metric $g(T)$, obtaining a smooth flow in some larger time interval $[0,T+\delta)$, in contradiction with the fact that $T$ was the maximal time of smooth existence. This
completes the proof of Theorem~\ref{longtime}. 
\end{proof}

\bibliographystyle{amsplain}
\bibliography{biblio}

\providecommand{\bysame}{\leavevmode\hbox to3em{\hrulefill}\thinspace}
\providecommand{\MR}{\relax\ifhmode\unskip\space\fi MR }
\providecommand{\MRhref}[2]{%
  \href{http://www.ams.org/mathscinet-getitem?mr=#1}{#2}
}
\providecommand{\href}[2]{#2}
\begin{thebibliography}{10}

\bibitem{aubin}
T.~Aubin, \emph{M\'etriques riemanniennes et courbure}, J. Diff. Geom.
  \textbf{4} (1970), 383--519.

\bibitem{besse}
A.~L. Besse, \emph{Einstein manifolds}, Springer--Verlag, Berlin, 2008.

\bibitem{bohmwilk2}
C.~B{\"o}hm and B.~Wilking, \emph{Nonnegatively curved manifolds with finite
  fundamental groups admit metrics with positive {R}icci curvature}, Geom.
  Funct. Anal. \textbf{17} (2007), no.~3, 665--681.

\bibitem{jpb1}
J.-P. Bourguignon, \emph{Ricci curvature and {E}instein metrics}, Global
  differential geometry and global analysis ({B}erlin, 1979), Lecture Notes in
  Math., vol. 838, Springer, Berlin, 1981, pp.~42--63.

\bibitem{brendle1}
S.~Brendle, \emph{Convergence of the {Y}amabe flow for arbitrary initial
  energy}, J. Diff. Geom. \textbf{69} (2005), no.~2, 217--278.

\bibitem{mancat1}
G.~Catino and C.~Mantegazza, \emph{Evolution of the {W}eyl tensor under the
  {R}icci flow}, Ann. Inst. Fourier (2011), 1407--1435.

\bibitem{CatMasMonRig}
G.~Catino, P.~Mastrolia, D.~D. Monticelli, and M.~Rigoli, \emph{Conformal
  {R}icci solitons and related integrability conditions}, Adv. Geom.
  \textbf{16} (2016), no.~3, 301--328.

\bibitem{CaMa}
G.~Catino and L.~Mazzieri, \emph{Gradient {E}instein solitons}, Nonlinear Anal.
  \textbf{132} (2016), 66--94.

\bibitem{CaMaMo}
G.~Catino, L.~Mazzieri, and S.~Mongodi, \emph{Rigidity of gradient {E}instein
  shrinkers}, Comm. Cont. Math. \textbf{17} (2015), no.~6, 1550046.

\bibitem{chowbookII}
B.~Chow, S.-C. Chu, D.~Glickenstein, C.~Guenther, J.~Isenberg, T.~Ivey,
  D.~Knopf, P.~Lu, F.~Luo, and L.~Ni, \emph{The {R}icci flow: techniques and
  applications. {P}art {II}. {A}nalytic aspects}, Mathematical Surveys and
  Monographs, vol. 144, American Mathematical Society, Providence, RI, 2008.

\bibitem{chknopf}
B.~Chow and D.~Knopf, \emph{The {R}icci flow: an introduction}, Mathematical
  Surveys and Monographs, vol. 110, American Mathematical Society, Providence,
  RI, 2004.

\bibitem{delay}
E.~Delay, \emph{Inversion d'op\'erateurs de courbures au voisinage de la
  m\'etrique euclidienne}, ArXiv Preprint Server -- http:/$\!\!$/$\!$arxiv.org,
  2014.

\bibitem{deturck}
D.~M. DeTurck, \emph{Deforming metrics in the direction of their {R}icci
  tensors}, J. Diff. Geom. \textbf{18} (1983), no.~1, 157--162.

\bibitem{deturck2}
\bysame, \emph{Deforming metrics in the direction of their {R}icci tensors
  (improved version)}, Collected Papers on Ricci Flow (H.-D. Cao, B.~Chow,
  S.-C. Chu, and S.-T. Yau, eds.), Series in Geometry and Topology, vol.~37,
  Int. Press, 2003, pp.~163--165.

\bibitem{eelsam}
J.~Jr. Eells and J.~H. Sampson, \emph{Harmonic mappings of {R}iemannian
  manifolds}, Amer. J. Math. \textbf{86} (1964), 109--160.

\bibitem{fischer}
A.~E. Fischer, \emph{An introduction to conformal {R}icci flow}, ArXiv Preprint
  Server -- http:/$\!\!$/$\!$arxiv.org, 2003.

\bibitem{gahula}
S.~Gallot, D.~Hulin, and J.~Lafontaine, \emph{{R}iemannian geometry},
  Springer--Verlag, 1990.

\bibitem{hamilton1}
R.~S. Hamilton, \emph{Three--manifolds with positive {R}icci curvature}, J.
  Diff. Geom. \textbf{17} (1982), no.~2, 255--306.

\bibitem{hamilton2}
\bysame, \emph{Four--manifolds with positive curvature operator}, J. Diff.
  Geom. \textbf{24} (1986), no.~2, 153--179.

\bibitem{hamilton9}
\bysame, \emph{The formation of singularities in the {R}icci flow}, Surveys in
  differential geometry, Vol.~II (Cambridge, MA, 1993), Int. Press, Cambridge,
  MA, 1995, pp.~7--136.

\bibitem{hamilton10}
\bysame, \emph{Four--manifolds with positive isotropic curvature}, Comm. Anal.
  Geom. \textbf{5} (1997), no.~1, 1--92.

\bibitem{ivey1}
T.~Ivey, \emph{Ricci solitons on compact three--manifolds}, Differential Geom.
  Appl. \textbf{3} (1993), no.~4, 301--307.

\bibitem{LQZ}
P.~Lu, J.~Qing, and Y.~Zheng, \emph{A note on conformal {R}icci flow}, ArXiv
  Preprint Server -- http:/$\!\!$/$\!$arxiv.org, 2011.

\bibitem{Manlib}
C.~Mantegazza, \emph{Lecture notes on mean curvature flow}, Progress in
  Mathematics, vol. 290, Birkh\"auser/Springer Basel AG, Basel, 2011.

\bibitem{savas-smoc}
A.~Savas-Halilaj and K.~Smoczyk, \emph{Bernstein theorems for length and area
  decreasing minimal maps}, Calc.Var. and Partial Differential Equations.
  \textbf{50} (2014), 549--577.

\bibitem{struwschwe}
H.~Schwetlick and M.~Struwe, \emph{Convergence of the {Y}amabe flow for
  ``large'' energies}, J. Reine Angew. Math. \textbf{562} (2003), 59--100.

\bibitem{topping1}
P.~Topping, \emph{Lectures on the {R}icci flow}, London Mathematical Society
  Lecture Note Series, vol. 325, Cambridge University Press, Cambridge, 2006.

\bibitem{weinberger}
H.~F. Weinberger, \emph{Invariant sets for weakly coupled parabolic and
  elliptic systems}, Rend. Mat. (6) \textbf{8} (1975), 295--310, Collection of
  articles dedicated to Mauro Picone on the occasion of his ninetieth birthday.

\bibitem{ye1}
R.~Ye, \emph{Global existence and convergence of {Y}amabe flow}, J. Diff. Geom.
  \textbf{39} (1994), no.~1, 35--50.

\end{thebibliography}

\end{document}